\documentclass{article}

\usepackage{microtype}
\usepackage{graphicx}
\usepackage{subcaption}
\usepackage{booktabs} 

\usepackage{hyperref}

\usepackage[preprint]{icml2026}

\usepackage{amsmath}
\usepackage{amssymb}
\usepackage{mathtools}
\usepackage{amsthm}

\allowdisplaybreaks

\usepackage[capitalize,noabbrev]{cleveref}
\Crefname{assumption}{Assumption}{Assumptions}

\icmltitlerunning{Broximal Alignment for Global Non-Convex Optimization}

\usepackage{multirow}
\usepackage{layout}

\usepackage{amsmath}
\usepackage{amsthm}
\usepackage{amssymb}
\usepackage{amsfonts}
\usepackage{mathtools}
\usepackage{siunitx}

\usepackage{enumitem}

\usepackage{bbm}

\usepackage{url}
\usepackage{color}
\usepackage{graphicx}
\usepackage{verbatim}
\usepackage{nicefrac}

\usepackage[export]{adjustbox}

\usepackage{apptools}
\usepackage{thmtools}
\usepackage{thm-restate}
\usepackage{mdframed}

\usepackage[flushleft]{threeparttable}
\usepackage{array,booktabs,makecell}

\usepackage{listings}
\usepackage{xcolor}
\usepackage{todonotes}

\newcommand{\norm}[1]{\left\| #1 \right\|}

\newcommand{\inp}[2]{\left\langle#1,#2\right\rangle} 

\newcommand{\parens}[1]{\left( #1 \right)}
\newcommand{\brac}[1]{\left\{ #1 \right\}}

\newcommand{\cB}{\mathcal{B}}
\newcommand{\cC}{\mathcal{C}}

\newcommand{\cF}{\mathcal{F}}

\newcommand{\cN}{\mathcal{N}}

\newcommand{\cX}{\mathcal{X}}

\newcommand{\mQ}{\mathbf{Q}}
\newcommand{\mX}{\mathbf{X}}
\newcommand{\mI}{\mathbf{I}}

\newcommand{\del}[1]{}

\newcommand{\R}{\mathbb{R}} 

\newcommand{\eqdef}{:=} 

\DeclareMathOperator{\dom}{dom}         

\DeclareMathOperator*{\argmin}{arg\,min}

\def\flr#1{\left\lfloor #1 \right\rfloor}

\newcommand{\BProxSub}[3]{\textnormal{{brox}}^{#1}_{#2}(#3)}
\newcommand{\Proj}[2]{\Pi_{#1}(#2)}
\newcommand{\dist}[2]{\operatorname{dist}\!\left(#1,#2\right)}

\usepackage{tcolorbox}
\usepackage{pifont}
\definecolor{mydarkgreen}{RGB}{39,130,67}
\definecolor{mydarkred}{RGB}{192,47,25}
\definecolor{mydarkorange}{RGB}{39,130,67}
\definecolor{yaleblue}{rgb}{0.06, 0.3, 0.57}
\definecolor{myred}{RGB}{215,60,50}
\definecolor{coral}{HTML}{FF7F50}
\definecolor{peach}{HTML}{CC5500}
\definecolor{NavyBlue}{RGB}{0, 0, 128}
\definecolor{CrimsonRed}{RGB}{220, 20, 60}
\definecolor{Gold}{RGB}{204, 172, 0}
\definecolor{PaleRed}{rgb}{0.95, 0.3, 0.3}
\definecolor{MPLOrange}{rgb}{1.0, 0.6470588235294118, 0.0}
\definecolor{MPLGreen}{rgb}{0.0, 0.5019607843137255, 0.0}
\definecolor{ForestGreen}{RGB}{34,139,34}
\definecolor{OliveGreen}{RGB}{107,142,35}
\definecolor{PurplePrint}{RGB}{117, 112, 179}
\definecolor{GreenPrint}{RGB}{27, 158, 119}
\definecolor{RedPrint}{RGB}{217, 95, 2}
\definecolor{VeryLightGray}{rgb}{0.9,0.9,0.9}

\newcommand{\green}{\color{mydarkgreen}}
\newcommand{\red}{\color{mydarkred}}
\newcommand{\cmark}{\green\ding{51}}%
\newcommand{\xmark}{\red\ding{55}}%

\newcommand{\algnamesmall}[1]{{\small \sf #1}}

\newcommand{\newalg}{\algnamesmall{BPM}}

\newcommand{\newclass}[2]{\mathcal{F}_{#1}^{\textnormal{BA}}(#2)}
\newcommand{\newclassu}[2]{\mathcal{F}_{#1}^{\textnormal{UBA}}(#2)}

\mdfdefinestyle{theoremstyle}{
    backgroundcolor=lightgray!12, 
    linecolor=lightgray!12, 
    innertopmargin=6pt, 
    innerbottommargin=4pt,
    innerrightmargin=5pt,
    innerleftmargin=5pt
}

\declaretheoremstyle[
    headfont=\bfseries,
    bodyfont=\normalfont,
    mdframed={
        style=theoremstyle
    }
]{graytheoremstyle}

\declaretheorem[
    name=Theorem,
    style=graytheoremstyle,
    numberwithin=section
]{theorem}

\declaretheorem[
    name=Lemma,
    style=graytheoremstyle,
    numberwithin=section
]{lemma}

\declaretheorem[
    name=Assumption,
    style=graytheoremstyle,
    numberwithin=section
]{assumption}

\declaretheorem[
    name=Definition,
    style=graytheoremstyle,
    numberwithin=section
]{definition}

\newtheorem{proposition}{Proposition}

\newtheorem{fact}{Fact}

\theoremstyle{plain}

\newtheorem{example}{Example}
\newtheorem{remark}[theorem]{Remark}

\definecolor{codegreen}{rgb}{0,0.6,0}
\definecolor{codegray}{rgb}{0.5,0.5,0.5}
\definecolor{codepurple}{rgb}{0.58,0,0.82}
\definecolor{backcolour}{rgb}{0.95,0.95,0.92}

\lstdefinestyle{mystyle}{
    backgroundcolor=\color{backcolour},   
    commentstyle=\color{codegreen},
    keywordstyle=\color{magenta},
    numberstyle=\tiny\color{codegray},
    stringstyle=\color{codepurple},
    basicstyle=\ttfamily\footnotesize,
    breakatwhitespace=false,         
    breaklines=true,                 
    captionpos=b,                    
    keepspaces=true,                 
    numbers=left,                    
    numbersep=5pt,                  
    showspaces=false,                
    showstringspaces=false,
    showtabs=false,                  
    tabsize=2
}

\begin{document}

\twocolumn[
  \icmltitle{Broximal Alignment for Global Non-Convex Optimization}



  \icmlsetsymbol{equal}{*}

  \begin{icmlauthorlist}
    \icmlauthor{Kaja Gruntkowska}{yyy}
    \icmlauthor{Hanmin Li}{yyy}
    \icmlauthor{Xun Qian}{yyy}
    \icmlauthor{Peter Richt\'{a}rik}{yyy}
  \end{icmlauthorlist}

  \icmlaffiliation{yyy}{Center of Excellence for Generative AI, King Abdullah University of Science and Technology, Thuwal, Saudi Arabia}

  \icmlcorrespondingauthor{Kaja Gruntkowska}{kaja.gruntkowska@kaust.edu.sa}

  \icmlkeywords{Machine Learning, ICML}

  \vskip 0.3in
]



\printAffiliationsAndNotice{}  

\begin{abstract}
    Most non-convex optimization theory is built around gradient dynamics, leaving global convergence largely unexplored. The dominant paradigm focuses on stationarity, certifying only that the gradient norm vanishes, which is often a weak proxy for actual optimization success. In practice, gradient norms can stagnate or even increase during training, and stationary points may be far from global solutions.
    In this work, we propose a new framework for global non-convex optimization that avoids gradient-based reasoning altogether. We revisit the Ball Proximal Point Method ({\newalg}), a trust-region–style algorithm introduced by \citet{gruntkowska2025ball}, and propose a novel structural condition -- \emph{Broximal Alignment} -- under which {\newalg} provably converges to a global minimizer. Our condition requires no convexity, smoothness, or Lipschitz assumptions, and it permits multiple and disconnected global minima as well as non-optimal local minima.
    We show that this class generalizes standard non-convex frameworks such as quasiconvexity, star convexity, quasar convexity, and the aiming condition. Our results provide a new conceptual foundation for global non-convex optimization beyond stationarity.
\end{abstract}

\section{Introduction}

Non-convex optimization lies at the heart of modern machine learning (ML). Matrix factorization \citep{koren2009matrix}, phase retrieval \citep{patterson1934fourier, candes2015phase}, representation learning, and most prominently deep neural networks \citep{lecun2015deep, goodfellow2016deep} all lead to objective functions with highly non-convex landscapes. Although such functions provide the expressive capacity needed to model complex learning tasks, they often pose significant challenges for optimizer designers. The underlying landscape can be highly irregular, with many poor local minima and saddle points that can trap optimization algorithms \citep{pascanu2014saddle, vidal2017mathematics, safran2018spurious, jin2021nonconvex}.

Unlike convex optimization, where global minimizers can be efficiently found \citep{nemirovski1983problem, nemirovskii1985optimal, nesterov2003introductory}, we lack a comparable toolkit for non-convex problems. Indeed, many non-convex optimization tasks are NP-hard to solve \citep{murty1987someNP}, and a number of them are NP-hard even to approximate \citep{meka2008rank, jain2017non}. As a result, rather than aiming for (approximate) global solutions, most theoretical results for non-convex optimization, especially those for gradient-based methods, are framed in terms of \emph{stationarity}: convergence to points where the gradient norm is small. Although mathematically convenient, such guarantees provide limited insight into \emph{global} optimization behavior.

\subsection{Limitations of Stationarity-Based Non-convex Optimization Theory}

To fix notation, we consider solving the general (potentially non-convex) optimization problem
\begin{align}\label{eq:problem}
    \min_{x \in \R^d} f(x),
\end{align}
where $f: \R^d \mapsto \R \cup \{+\infty\}$ is proper (that is, the set $\dom f \eqdef \{x \in \R^d : f(x)<+\infty\}$ is nonempty), closed, and has at least one minimizer. We denote the set of all minimizers by $\cX_f$ and write $f_\star \eqdef \min_x f(x)$.

The dominant theoretical paradigm for such tasks is built around \emph{stationarity}. Convergence guarantees are typically stated in terms of the norm of the gradient, asserting that an algorithm reaches a point where $\norm{\nabla f(x)}$ is small. This framework underlies much of the modern theory of stochastic gradient methods.

From the modeling perspective, however, stationarity-based guarantees are fundamentally misaligned with the goals of learning. A stationary point can be a global minimizer, a poor local minimizer, or a saddle point; first-order information alone does not distinguish among these possibilities. Therefore, a small gradient provides little guidance on whether an algorithm meaningfully solves the underlying problem. Indeed, in highly non-convex landscapes, large regions of near-zero gradient may exist far from any global solution \citep{pascanu2014saddle}. Algorithms that follow gradient-based dynamics can stagnate in such regions, producing iterates that are theoretically certified as ``successful'' while being arbitrarily suboptimal in objective value \citep{jin2017escape}. In this sense, stationarity is a weak proxy for optimization progress.

Empirical evidence further challenges the relevance of gradient norms as a convergence signal. In long training runs of modern ML models, the gradient norm has been observed not to decrease, and in some cases to \emph{increase}, toward the end of training \citep{defazio2025gradients}. This behavior has been attributed to interactions between weight decay, learning rate schedules, and normalization layers.

In practice, escaping stationary points is usually left to stochasticity and noise, either explicitly (e.g., perturbed gradient methods \citep{jin2017escape, kleinberg2018alternative, jin2021nonconvex, ahn2023escape}) or implicitly (e.g., minibatch sampling \citep{smith2017bayesian}). Even when successful, the resulting guarantees of such methods remain fundamentally local. At best, they ensure convergence to approximate second-order stationary points, which still need not correspond to globally good solutions.

Thus, much of the existing non-convex optimization theory addresses the question: \emph{what local geometric properties characterize the iterates produced by an algorithm?} rather than the more fundamental question: \emph{when does an algorithm find a good solution?}
This state of affairs motivates the central question of this work:
\begin{center}
    \emph{How to obtain global convergence guarantees for non-convex problems without relying on gradient-based dynamics and gradient-norm metrics?}
\end{center}
Addressing it requires rethinking the algorithmic primitives and structural assumptions of non-convex optimization, moving toward mechanisms that are not driven solely by local first-order information.

\subsection{Ball Proximal Point Method}

To study the questions raised above, we revisit the Ball Proximal Point Method ({\newalg}) -- a conceptual framework for global optimization introduced by \citet{gruntkowska2025ball}.\footnote{Ball oracles have also been studied in several prior works \citep{carmon2020acceleration, carmon2021thinking, asi2021stochastic}; see \citet{gruntkowska2025ball} for a review.} {\newalg} is a trust-region–type method in which each iterate is obtained by minimizing the objective over a ball of radius $t_k>0$ centered at the current point:
\begin{align}\label{eq:bpm}
    \boxed{x_{k+1} \in \BProxSub{t_k}{f}{x_k}}
\end{align}
(in the remainder of this work, we restrict attention to the constant-radius setting and assume $t_k \equiv t > 0$ $\forall k$), where the \emph{ball-proximal (``broximal'') operator} is defined as
\begin{align*}
    \BProxSub{t}{f}{x} \eqdef \argmin_{z \in \cB_{\mX}(x, t)} f(z).
\end{align*}
Here $\cB_{\mX}(x, t) = \brac{z\in \R^d : \norm{z-x}_{\mX} \leq t}$ and $\norm{\cdot}_{\mX}$ is the norm induced by a symmetric positive definite matrix $\mX \in \R^{d\times d}$, defined by $\norm{x}_{\mX} \eqdef \sqrt{x^\top \mX x}$.\footnote{The original {\newalg} paper \citep{gruntkowska2025ball} considered the simplified setting where $\mX = \mI$ is the identity matrix. Other norms were considered in \citet{gruntkowska2025non}.} We denote $\inp{x}{y}_{\mX} \eqdef \inp{\mX x}{y} = x^\top \mX y$. When $\mX = \mI$, we omit the subscript and simply write $\cB(x, t)$, $\norm{\cdot}$ and $\inp{\cdot}{\cdot}$.

Like the classical Proximal Point Method (\algnamesmall{PPM}) \citep{rockafellar1976monotone}, {\newalg} solves an auxiliary optimization problem at each iteration, but with a hard trust-region constraint instead of a quadratic regularizer. The operator is set-valued in general, which is why we write ``$\in$’’ rather than ``$=$’’ in~\eqref{eq:bpm}. When multiple minimizers exist within the ball, the algorithm is free to choose any of them.

Motivated by challenges in non-convex optimization, the method was originally developed as an oracle capable of escaping local minima and saddle points. Along the way, it was found to enjoy unusually strong convergence guarantees: it converges \emph{linearly} without the strong convexity assumptions required by methods such as \algnamesmall{PPM}. Moreover, while \algnamesmall{PPM} with finite steps can only produce approximate solutions, {\newalg} can attain the \emph{exact global minimum} in a \emph{finite number of iterations} using \emph{finite radii}. Despite this promise, and despite evidence that the method can handle non-convexity (see \Cref{fig:nonconv}), existing analyses focus almost exclusively on convex objectives, with only limited non-convex treatment \citep{gruntkowska2025ball}.

Our aim is to understand what properties of the objective enable {\newalg} to achieve global convergence, and to identify the structural assumptions under which such guarantees can be established in the \emph{non-convex} setting. Crucially, we do \emph{not} view {\newalg} as a practical algorithm in its current form, nor do we claim efficient implementations for the problems we study. Indeed, the broximal oracle itself is very powerful and generally intractable: if the radius is sufficiently large ($t_0 \geq \norm{x_0-x_{\star}}_{\mX}$, where $x_0$ is a starting point and $x_{\star}$ is an optimal point), then $x_{\star} \in \BProxSub{t_0}{f}{x_0}$ and the algorithm finds a global solution in $1$ step. Thus, solving the subproblem can be as difficult as solving the original problem itself. While practical implementations are possible (see \Cref{sec:practice}), they are not the focus of this work. Rather, we treat {\newalg} as a conceptual tool for isolating and studying a novel class of non-convex functions for which global convergence can be established. This class stands on its own and may be viewed as an analytic tool for studying optimization algorithms beyond {\newalg} itself.

\section{Contributions}

This paper proposes a new framework for global non-convex optimization beyond stationarity. We introduce a novel function class under which {\newalg} converges globally, and show that this condition strictly subsumes several established non-convex frameworks.
Our main contributions are:

$\triangleright$ \textbf{New assumptions for non-convex optimization.} We introduce a novel class of non-convex functions, termed \emph{Broximal Aligned Functions}, defined by Assumptions \ref{as:main1} and \ref{as:main2}. The framework enables global optimization guarantees beyond gradient-based reasoning.

$\triangleright$ \textbf{Global convergence of BPM.} We prove that {\newalg} converges to a global minimizer within this class, without reliance on gradients or stochastic noise (\Cref{thm:nc_bpm}).

$\triangleright$ \textbf{Interpretation and examples.} We provide intuitive interpretations, comparisons to existing function classes, and concrete examples demonstrating that our assumptions are weaker than several standard conditions such as pseudoconvexity, quasiconvexity, quasar convexity, and the aiming condition (\Cref{sec:examples}).

The remainder of the paper is organized as follows. In \Cref{sec:nonocnv_def}, we introduce the new function class via Assumptions \ref{as:main1} and \ref{as:main2} and motivate its origins. Building on this abstract framework, \Cref{sec:conv_thm} presents global convergence guarantees for {\newalg}. \Cref{sec:examples} then clarifies the assumptions by providing examples and comparisons to other non-convex function classes. We provide comments on practical implementations in \Cref{sec:practice} and conclude in \Cref{sec:summary}.

We emphasize that this work is theoretical and foundational in nature. Our goal is not to propose a new practical optimizer, but to broaden the conceptual foundations of non-convex optimization by identifying mechanisms that enable global guarantees beyond gradient-based paradigms. We hope that this perspective will stimulate further research into practical approximations and connections to modern ML optimization methods.

All proofs are deferred to the Appendix, and a complete list of notation is provided in \Cref{table:notation}.

\section{Optimization Beyond Convexity}\label{sec:nonocnv_def}

Non-convex optimization is generally challenging, yet algorithmic developments have yielded both heuristic and provably convergent approaches. One pragmatic approach exploits the local nature of many optimization methods by running them in parallel from multiple initializations and aggregating the resulting solutions. Another line of work studies particle-based approaches \citep{fornasier2020consensus, fornasier2024consensus, fornasier2025regularity}, which address non-convex problems by reformulating them as PDE-governed dynamics over measures. Although originally heuristic, these methods now admit provable global solutions. The main drawback is computational: approximating the PDE typically requires an exponentially large or even infinite number of particles.
In addition, certain problems become tractable when the objective has some special structure. For example, one-layer neural networks have no spurious local minima \citep{feizi2017porcupine, haeffele2017global, soltanolkotabi2018theoretical}, and \algnamesmall{SGD} can find global minima in linear and sufficiently wide over-parameterized networks under appropriate assumptions \citep{danilova2022recent, shin2022effects, allen2019convergence}. Similarly, with suitable assumptions on input data, a range of non-convex problems, including matrix completion, some deep learning tasks, phase retrieval, and eigenvalue problems, display ``convexity-like'' behavior \citep{ge2016matrix, ge2017no, bartlett2018gradient, kangal2018subspace}.

Despite the strong theoretical guarantees offered by convex optimization, its assumptions are often too restrictive for modern large-scale models. This has motivated the development of intermediate frameworks that retain meaningful convergence properties while being applicable to broader problem classes. Examples include star convexity \citep{nesterov2006cubic}, quasar convexity \citep{hardt2018gradient, hinder2019near}, pseudoconvexity \citep{mangasarian1975pseudo}, quasiconvexity \citep{arrow1961quasi}, invexity \citep{craven1985invex}, variational coherence \citep{zhou2017stochastic}, the Polyak-{\L}ojasiewicz (P{\L}) condition \citep{polyak1963gradient, lojasiewicz1963topological}, and aiming condition \citep{liu2023aiming}. Each of these offers certain benefits, but also comes with inherent limitations and applicability constraints.

Here, we move beyond settings in which global convergence follows from the absence of suboptimal local minima, and instead pursue the more ambitious goal of achieving global optimality without relying on such favorable geometric properties. This shift requires abandoning first-order methods in favor of more powerful ball oracles. Drawing on the theory of {\newalg}, we reverse-engineer the underlying conditions necessary to achieve global convergence under this broader framework.

\subsection{The Origins}

The original analysis of {\newalg} relies on the \emph{Second Brox Theorem} \citep{gruntkowska2025ball}, exploiting the fact that if $f$ is convex, then there exists $c_t(x) \geq 0$ such that $$f(y) - f(u) \geq c_t(x)\inp{x - u}{y - u}_{\mX}$$ for $x \in \dom f$, $u \in \BProxSub{t}{f}{x}$ and any $y \in \R^d$. In particular, specializing to $y=x_{\star}$, where $x_\star$ is a global minimizer, yields $c_t(x)\inp{x - u}{u - x_{\star}}_{\mX} \geq f(u) - f_{\star} \geq 0$. This observation motivates dropping convexity altogether and instead postulating this inequality directly, leading to the following assumption (illustrated in \Cref{fig:as_main_geo}):
\begin{assumption}\label{as:main1}
    Let $f:\R^d \to \R \cup \{+\infty\}$ be a proper, closed function, and let $t>0$. There exists $x_{\star} \in \cX_f$ such that, for every $x \in \dom f \backslash \brac{\cX_f + \cB_{\mX}(0,t)}$
    \begin{align}\label{eq:as_main}
        \inp{x-u}{x_{\star}-u}_{\mX} \leq 0
    \end{align}
    for any $u \in \BProxSub{t}{f}{x}$.
\end{assumption}
Intuitively, whenever $x$ lies outside of the $t$-neighborhood of the set of global minimizers $\cX_f$, any local minimizer $u$ of $f$ within the ball $\cB_{\mX}(x, t)$ cannot lie ``past'' some global minimizer $x_{\star}$, ensuring that the step from $x$ to $u$ is aligned toward it.
The relaxation $\textnormal{dist}(x, \cX_f) > t$ is essential: if the condition were imposed for all $x \in \dom f$, then Lemma \ref{lemma:2ptdecr} would imply uniqueness of the global minimizer. This phenomenon is consistent with the convex analysis of \citet{gruntkowska2025ball}, where the constant $c_t(x)$ becomes $0$ when $u\in\cX_f$, making the inner product sign irrelevant.

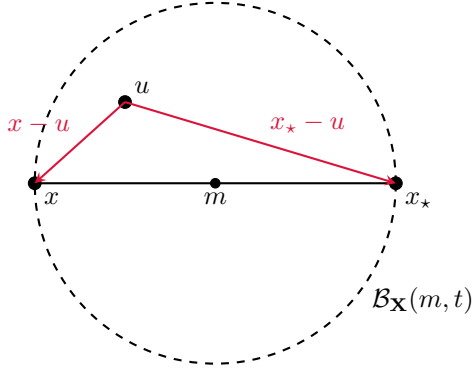
\begin{figure}[t]
\centering
\begin{tikzpicture}[scale=1.2]
    \coordinate (x) at (-2,0);
    \coordinate (xs) at (2,0);
    \coordinate (m) at (0,0);  
    \coordinate (u) at (-1,0.9);

    \draw[thick, dashed] (m) circle (2);

    \filldraw[black] (x) circle (2pt) node[below right] {$x$};
    \filldraw[black] (xs) circle (2pt) node[below right] {$x_\star$};
    \filldraw[black] (u) circle (2pt) node[above right] {$u$};
    \filldraw[black] (m) circle (1.5pt) node[below] {$m$};

    \draw[thick] (x) -- (xs);

    \draw[->, thick, >=stealth, CrimsonRed] (u) -- (x) node[midway, above left] {$x-u$};
    \draw[->, thick, >=stealth, CrimsonRed]  (u) -- (xs) node[midway, above right] {$x_\star-u$};

    \node at (2.3,-1.3) {$\mathcal{B}_{\mX}(m,t)$};
\end{tikzpicture}
\caption{The inequality in \Cref{as:main1} is equivalent to requiring that $u$ lies inside the ball centered at the midpoint $m=\frac{1}{2} (x+x_\star)$ with radius $t=\frac{1}{2}\norm{x-x_\star}_\mX$. Equivalently, the angle at $u$ in the triangle formed by $x$, $u$ and $x_\star$ is $\geq 90$ degrees (in the geometry induced by $\mX$).}
\label{fig:as_main_geo}
\end{figure}

Under \Cref{as:main1}, unlike in the convex case, the broximal operator need not be single-valued outside $\cX_f + \cB_{\mX}(0,t)$. In such cases, we can define $\BProxSub{t}{f}{x}$ to select a maximally distant point from $x$ among the broximal minimizers. In particular, if $x \in \BProxSub{t}{f}{x}$ but the set contains additional elements, the algorithm must choose a point in $\BProxSub{t}{f}{x}\setminus\{x\}$ to ensure progress.
A more problematic case arises when the only broximal minimizer is $x$ itself, i.e., $\BProxSub{t}{f}{x}=\{x\}$. Under \Cref{as:main1}, this does not necessarily imply that $x\in\cX_f$, and we will need an additional assumption to rule out this scenario. First, however, we examine what can be deduced from \Cref{as:main1} alone.

Suppose that $u \in \BProxSub{t}{f}{x} \not\subseteq \cX_f$ is such that $f(u) = f(x)$ and $u\neq x$. By \Cref{lemma:2ptdecr}, this implies $x \not\in \BProxSub{t}{f}{u}$. Since $\BProxSub{t}{f}{u}$ is non-empty (\Cref{thm:1stbrox_nc}), there exists $\bar{u} \in \BProxSub{t}{f}{u}$ such that $f(\bar{u})<f(u)$ (the inequality is strict because otherwise we would have $x \in \BProxSub{t}{f}{u}$). Consequently, $\bar{u} \not\in \cB_{\mX}(x, t)$, meaning that $\norm{x-\bar{u}}_{\mX} > t$.
Therefore, under \Cref{as:main1}, as long as the algorithm makes progress ($x_k \neq x_{k+1}$), a non-decreasing step $f(x_k) = f(x_{k+1})$ must be followed by a strict decrease $f(x_{k+1}) > f(x_{k+2})$, and then $\norm{x_k-x_{k+2}}_{\mX} > t$. This behavior is formalized in \Cref{lemma:t_dist}.

The above argument relies on the condition $x_k \neq x_{k+1}$. To guarantee this unless a global minimizer has been reached, we introduce it explicitly as an assumption.
\begin{assumption}\label{as:main2}
    Let $f:\R^d\to\R \cup \{+\infty\}$ be a proper, closed function, and let $t>0$. Then
    \begin{align*}
        \BProxSub{t}{f}{x} = \{x\} \quad\implies\quad x\in\cX_f
    \end{align*}
    for all $x \in \dom f$.
\end{assumption}
In other words, if~$x$ is a strict minimizer of~$f$ over the ball $\cB_{\mX}(x, t)$, then~$x$ must already be globally optimal, i.e., there are no ``bad'' local minima detectable at the scale of radius~$t$.
Note that \Cref{as:main2} still permits $x \in \BProxSub{t}{f}{x}$ with $x \not\in \cX_f$, provided that there exists another point $\bar{x} \in \BProxSub{t}{f}{x}$ such that $\bar{x} \neq x$.

To simplify references, we introduce a notation for the family of functions that satisfy our new assumptions.
\begin{definition}[Broximal Aligned Functions]\label{def:BA}
    A proper, closed function $f:\R^d\to\R \cup \{+\infty\}$ belongs to the class $\newclass{\mX}{t}$ if it satisfies Assumptions \ref{as:main1} and \ref{as:main2} for a radius $t>0$ with respect to the geometry induced by the symmetric positive definite matrix $\mX \in \R^{d\times d}$.
\end{definition}
Importantly, this class requires no convexity, differentiability, smoothness, or Lipschitz assumptions -- only the geometric alignment condition of \Cref{as:main1} and the absence of isolated local minima as specified by \Cref{as:main2}.

\section{Convergence Theory}\label{sec:conv_thm}

It turns out that Assumptions \ref{as:main1} and \ref{as:main2} are enough for global convergence guarantees. Building upon them, the main convergence result relies on the fact that every \emph{second} iterate is separated by a distance of at least~$t$.

\begin{restatable}{theorem}{MAINTHM}\label{thm:nc_bpm}
    Assume that $f\in\newclass{\mX}{t}$ and let $\{x_k\}_{k\geq0}$ be the iterates of \algnamesmall{BPM} run with any $t>0$, where $x_0\in \dom f$. Then 
    \begin{enumerate}[label=(\roman*)]
        \item $\cX_f\cap \cB_{\mX}(x_k, t)\neq\emptyset$ if and only if $x_{k+1}$ is optimal.\label{pt:nc_opt}
        
        \item \label{pt:nc_dist_decr_bpm}
        The distances $\brac{\norm{x_k - x_{\star}}_{\mX}}_{k\geq0}$ are non-increasing. In particular, if $\cX_f\cap \cB_{\mX}(x_k, t)=\emptyset$, we have
        \begin{align*}
            \norm{x_{k+1} - x_{\star}}_{\mX}^2 \leq \norm{x_{k-2} - x_{\star}}_{\mX}^2 - \frac{t^2}{3}.
        \end{align*}
        
        \item \label{pt:nc_dist_bpm_lin}
        If $\flr{\frac{K}{3}} \geq \frac{3}{t^2} \norm{x_0 - x_{\star}}_{\mX}^2$, then $x_K\in\cX_f$.
    \end{enumerate}
\end{restatable}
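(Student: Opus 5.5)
The plan is to derive everything from one Fejér-type inequality implied by \Cref{as:main1}, and then use \Cref{lemma:t_dist} to show that progress is quantitatively bounded below over every window of three steps.

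\textbf{Part \ref{pt:nc_opt}.} This is essentially definitional.
\begin{itemize}
\item If some $x_\star'\in\cX_f$ lies in $\cB_{\mX}(x_k,t)$, then $\min_{z\in\cB_{\mX}(x_k,t)} f(z)=f_\star$. Hence every element of $\BProxSub{t}{f}{x_k}$, in particular $x_{k+1}$, attains $f_\star$ and is optimal.
\item Conversely, $x_{k+1}\in\cB_{\mX}(x_k,t)$ by construction. So if $x_{k+1}\in\cX_f$, the intersection is nonempty.
\end{itemize}
Once an optimal iterate is reached, all later iterates stay optimal, since the ball around an optimal point contains it.

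\textbf{Part \ref{pt:nc_dist_decr_bpm}.} First expand
\[
\norm{x_k-x_\star}_{\mX}^2=\norm{x_k-x_{k+1}}_{\mX}^2+\norm{x_{k+1}-x_\star}_{\mX}^2-2\inp{x_k-x_{k+1}}{x_\star-x_{k+1}}_{\mX}.
\]
Whenever $\operatorname{dist}(x_k,\cX_f)>t$, \Cref{as:main1} (with $x=x_k$, $u=x_{k+1}$) makes the last term nonnegative. This gives
\[
\norm{x_{k+1}-x_\star}_{\mX}^2\le \norm{x_k-x_\star}_{\mX}^2-\norm{x_k-x_{k+1}}_{\mX}^2 .
\]
In the remaining case, $x_{k+1}$ is already optimal by \ref{pt:nc_opt}. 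For those indices monotonicity has to be handled separately. I would either treat the case where $x_{k+1}$ and all subsequent iterates are optimal directly, or observe that the statement about $\norm{x_k-x_\star}_{\mX}$ is only needed before termination. This bookkeeping is the fiddly part of (ii).

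\textbf{Quantitative step.} Assume $\cX_f\cap\cB_{\mX}(x_k,t)=\emptyset$. Then by monotonicity of $f$-values along \algnamesmall{BPM}, none of $x_{k-2},x_{k-1},x_k$ is within distance $t$ of $\cX_f$; in particular none of them is optimal. Hence \Cref{as:main2}, together with the selection rule that picks a point other than $x_j$ when possible, gives $x_{j+1}\ne x_j$ for these $j$. I then invoke \Cref{lemma:t_dist} through the following dichotomy.
\begin{itemize}
\item If $f(x_k)<f(x_{k-1})$, then $x_k\notin\cB_{\mX}(x_{k-2},t)$, because $x_{k-1}$ minimizes $f$ over that ball. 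Thus $\norm{x_{k-2}-x_k}_{\mX}>t$.
\item If instead $f(x_k)=f(x_{k-1})$, then the step from $x_k$ to $x_{k+1}$ must be a strict decrease, and likewise $\norm{x_{k-1}-x_{k+1}}_{\mX}>t$.
\end{itemize}
In both cases some two consecutive steps within the window $x_{k-2},\dots,x_{k+1}$ span a distance greater than $t$. By the triangle inequality and $(a+b)^2\le 2(a^2+b^2)$, the sum of their squared lengths is at least $t^2/2\ge t^2/3$.

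Telescoping the Fejér inequality over the three steps from $x_{k-2}$ to $x_{k+1}$ then gives the claimed bound. Equivalently, one can bound all three steps at once using Cauchy–Schwarz on $\norm{x_{k-2}-x_{k+1}}_{\mX}$, which is where the constant $1/3$ naturally appears.

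\textbf{Part \ref{pt:nc_dist_bpm_lin}.} Suppose $x_K\notin\cX_f$. Then by \ref{pt:nc_opt}, none of $x_1,\dots,x_K$ is optimal, and the balls around $x_0,\dots,x_{K-1}$ miss $\cX_f$. Apply (ii) on $\flr{K/3}$ disjoint consecutive blocks of three steps, each of which decreases $\norm{\cdot-x_\star}_{\mX}^2$ by at least $t^2/3$. Together with nonnegativity this gives $\flr{K/3}\,t^2/3\le\norm{x_0-x_\star}_{\mX}^2$. Making the hypothesis strict, or noting that equality forces $x_K=x_\star$, yields a contradiction.

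\textbf{Main obstacle.} I expect the main difficulty to be the careful use of \Cref{lemma:t_dist} in (ii): ensuring the required iterates lie outside the $t$-neighbourhood of $\cX_f$, so that \Cref{as:main1} and \Cref{as:main2} apply at each of $x_{k-2},x_{k-1},x_k$. The off-by-one indexing behind $k-2$ versus $k+1$ also needs care.
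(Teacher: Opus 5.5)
Your proposal is correct and follows essentially the paper's route: the Fej\'er inequality $\norm{x_{k+1}-x_\star}_{\mX}^2\le\norm{x_k-x_\star}_{\mX}^2-\norm{x_{k+1}-x_k}_{\mX}^2$ from \Cref{as:main1}, the plateau-then-strict-decrease argument of \Cref{lemma:t_dist} to force a chord longer than $t$ in every three-step window, and block telescoping for (iii). Your split on the middle step isolates a two-step chord and so gives a decrease of $t^2/2$, whereas the paper's AM--QM over three steps gives $t^2/3$.

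On the bookkeeping you flagged, only your second option works. Monotonicity with respect to the $x_\star$ of \Cref{as:main1} can genuinely fail once an iterate enters $\cX_f+\cB_{\mX}(0,t)$. For example, take \Cref{example:2} with $d=1$, $a=10$, $t=1$: there \Cref{as:main1} holds only with $x_\star=0$, yet $x_0=9.5$ gives $x_1=a$, which is farther from $0$. The paper's proof likewise establishes monotonicity only for indices with $x_k\notin\cX_f+\cB_{\mX}(0,t)$.
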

Unlike the non-convex optimization literature standard, the guarantee above is not stated in terms of stationary points, but rather in terms of distances from the minimizer. We show that after a sufficient number of iterations, the method finds the exact global minimum.

\section{Unpacking the Broximal Geometry}\label{sec:examples}

So far, we have operated with the abstract function class $\newclass{\mX}{t}$ without clarifying which objective functions belong to it. At first glance, \Cref{as:main1} may seem cryptic, as it cannot be expressed using standard smoothness or curvature conditions. This section aims to unpack the structure behind our assumptions and explain the class of functions they define. We begin with simple examples that illustrate the kinds of functions covered by our framework. We then present a study of existing notions of generalized convexity, their characterizations, and their relationship to our assumptions (summarized in \Cref{tab:summary} and \Cref{fig:assumption_hierarchy}).

Before proceeding, we emphasize an important caveat: in principle, \emph{any} function can be cast under our framework for sufficiently large $t$. Once the ball defining the broximal operator contains a global minimizer $x_\star$, the assumptions hold trivially. Of course, this regime is not of practical interest: as $t\to\infty$, the optimization subroutine in \eqref{eq:bpm} becomes equivalent to global optimization, and is therefore no easier than the original problem. For this reason, we focus on settings where the assumptions hold for finite $t$.

Importantly, for \emph{any} fixed $t>0$, the standard non-convex function classes discussed below are more restrictive than $\newclass{\mI}{t}$. In other words, our framework is strictly more general and can capture broader non-convex behavior. While the presence of $\mX$ adds additional flexibility, we restrict attention to the standard Euclidean case $\mX=\mI$ for the remainder of this section, unless stated otherwise.

\subsection{Toy Examples}\label{sec:toy_examples}

We first present a simple illustrative example that satisfies Assumptions \ref{as:main1} and \ref{as:main2} while exhibiting a high degree of non-convexity.

\begin{example}\label{example:1}
    Consider the one-dimensional function
    \begin{align*}
    f(x) = |x| + 10 \sin(x),
    \end{align*}
    which has infinitely many local minima spaced every $2\pi$ (see \Cref{fig:nonconv}). Hence, for any fixed radius $t \geq 2 \pi$, the ball $\cB_{\mX}(x, t)$ always contains at least two local minimizers whose function values differ. It is easy to check that both Assumptions \ref{as:main1} and \ref{as:main2} hold for this radius, and so $f\in\newclass{\mI}{t}$ for any $t \geq 2 \pi$.
\end{example}

\begin{figure}[t]
    \centering
    \includegraphics[width=\columnwidth]{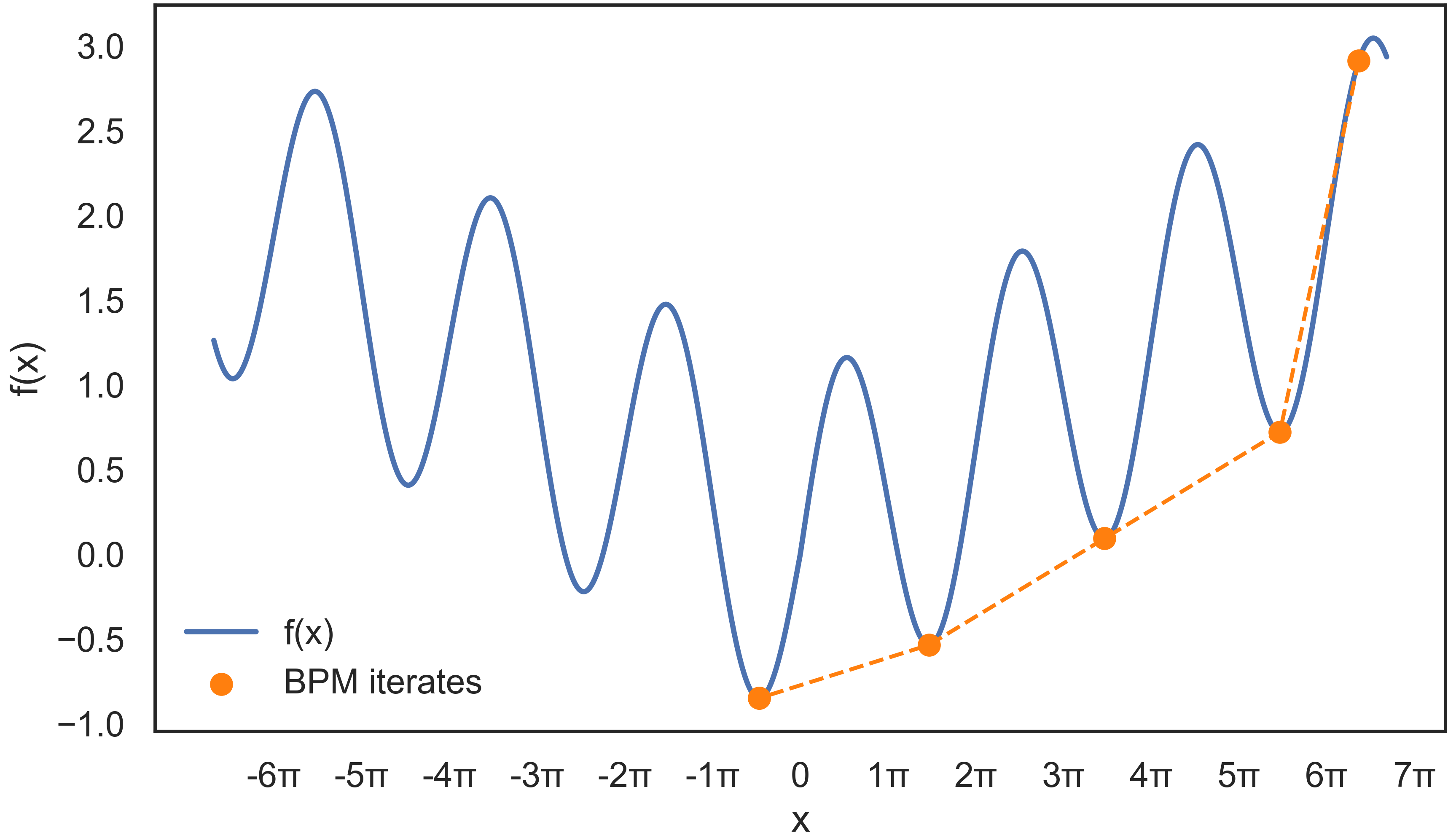}
    \caption{Trajectory of {\sf BPM} with $t=2\pi$ starting from $x_0=20$ applied to $f(x) = |x| + 10 \sin(x)$ (\Cref{example:1}).}
    \label{fig:nonconv}
\end{figure}

The example above shows that our setup does not preclude the existence of multiple local minima. This is in contrast to classical conditions, under which all stationary points are globally optimal (see \Cref{tab:summary}). The function class we identify can exhibit multiple local minimizers and nontrivial geometry, while still permitting global convergence of {\newalg}.

The next example shows that the set of minimizers $\cX_f$ can be disconnected.

\begin{restatable}{example}{EXAMPLE}\label{example:2}
    Fix $a \in \R^d$ such that $a \neq 0$ and consider the function $f:\R^d \mapsto \R \cup \brac{+\infty}$ defined by 
    \begin{align*}
    f(x) = \begin{cases}
        \norm{x}^2 & x \neq a, \\
        0 & x = a.
    \end{cases}
    \end{align*}
    Then $f$ has a disconnected set of minimizers $\cX_f = \brac{0, a}$, but one can show that $f\in\newclass{\mI}{t}$ with any $t>0$.
\end{restatable}

The example above motivates the construction of a larger class of functions that satisfy Assumptions \ref{as:main1} and \ref{as:main2} (see \Cref{lemma:constructing}).

We emphasize that both Examples \ref{example:1} and \ref{example:2} belong to $\newclass{\mX}{t}$ for suitable choices of $t$, yet they fail to satisfy any of the classical conditions listed in \Cref{tab:summary}.

\begin{table*}[t]
    \renewcommand{\arraystretch}{1.8}
    \caption{\small
    \textbf{Comparison of convex and non-convex function classes.} 
    \textbf{Non-Convex} = does not require $f$ to be convex;
    \textbf{Non-Differentiable} = does not require $f$ to be differentiable;
    \textbf{Stationary pts} = allows existence of stationary points that are not global minima;
    \textbf{Non-unique min} = allows existence of multiple global minima;
    \textbf{Disconnected $\cX_f$}: the set of minimizers need not be connected.}
    \label{tab:summary}
    \scriptsize
    \centering
    \begin{adjustbox}{width=\textwidth,center}
    \begin{threeparttable}
    \begin{tabular}{c c c c c c c}
        \toprule
        \textbf{Function class}
        & \textbf{Non-Convex}
        & \textbf{Non-Differentiable}
        & \textbf{Stationary pts}
        & \textbf{Non-unique min}
        & \textbf{Disconnected $\cX_f$}
        \\
        \midrule
        \bf Strongly convex & \xmark & \cmark & \xmark & \xmark & \xmark \\
        \bf Convex & \xmark & \cmark & \xmark & \cmark & \xmark \\
        \makecell{\bf Pseudoconvex \\ {\tiny \citep{mangasarian1975pseudo}}} & \cmark & \xmark & \xmark & \cmark & \xmark \\
        \makecell{\bf Quasiconvex \\ {\tiny \citep{arrow1961quasi}}} & \cmark & \cmark & \cmark & \cmark & \xmark \\
        \makecell{\bf Star-convex \\ {\tiny \citep{nesterov2006cubic}}} & \cmark & \xmark & \xmark & \cmark & \xmark \\
        \makecell{\bf Quasar convex \\ {\tiny \citep{hardt2018gradient}}} & \cmark & \xmark & \xmark & \cmark & \xmark \\
        \makecell{\bf Aiming \\ {\tiny \citep{liu2023aiming}}} & \cmark & \xmark & \xmark & \cmark & \xmark \\
        \makecell{\bf Polyak-{\L}ojasiewicz \\ {\tiny \citep{polyak1963gradient, lojasiewicz1963topological}}} & \cmark & \xmark & \xmark & \xmark & \xmark \\
        \makecell{\bf Quadratic growth \\ {\tiny \citep{bonnans1995second}}} & \cmark & \cmark & \cmark & \cmark & \cmark \\
        \midrule
        \makecell{$\newclass{\mX}{t}$ \\ (ours)} & \cmark & \cmark & \cmark & \cmark & \cmark \\
        \bottomrule
    \end{tabular}
    \end{threeparttable}
    \end{adjustbox}
\end{table*}

\begin{figure}[t]
\centering
\resizebox{\columnwidth}{!}{%
    \begin{tikzpicture}[
      box/.style={
        draw,
        rectangle,
        rounded corners,
        minimum width=2.5cm,
        minimum height=0.9cm,
        align=center
      },
      impl/.style={double, ->, thick, >=latex, shorten >=2pt},
      implcolor/.style={
        impl,
        draw=CrimsonRed
      },
      noimpl/.style={dashed, thick},
      shorten >=2pt,
      shorten <=2pt,
      x=4.2cm,
      y=1.6cm
    ]
    
    
    \node[box] (SC) at (0,0) {Strongly\\Convex};
    
    \node[box] (C)  at (0,-1) {Convex};
    \node[box] (PL) at (1,-1) {Polyak--\\{\L}ojasiewicz};
    
    \node[box] (PC) at (-1,-2) {Pseudoconvex};
    \node[box] (QC) at (0,-2)  {Quasiconvex};
    \node[box] (QG) at (1,-2)  {Quadratic\\Growth};
    
    \node[box] (Aiming) at (-1,-3) {Aiming};
    \node[box] (Star)   at (0,-3)  {Star-Convex};
    \node[box] (QC2)    at (1,-3)  {Quasar\\Convex};
    
    \node[box] (PA) at (-0.45,-4.3) {Broximal\\Aligned};
    
    
    \draw[impl] (SC) -- (C);
    \draw[impl] (SC) -- (PL);
    
    \draw[impl] (C) to[out=180,in=90] (PC);
    \draw[impl] (C) to[out=-90,in=90]  (QC);
    \draw[impl] (C) to[out=-10,in=10]  (Star);
    \draw[impl] (C) to[out=180,in=0]  (Aiming);
    \draw[impl] (PC) to[out=0,in=-180]  (QC);
    
    \draw[impl] (Star) to[out=0,in=180] (QC2);
    \draw[impl] (PL)   to[out=-90,in=90] (QG);
        
    \draw[implcolor] (QC) to[out=-150,in=90] (PA);
    \draw[implcolor] (QC2) to[out=-90,in=0] (PA);
    \draw[implcolor] (Aiming) to[out=-90,in=180] (PA);
        
    \draw[noimpl] (QC)     to[out=0,in=180] (QC2);
    \draw[noimpl] (PC)     to[out=-30,in=150] (QC2);
    \draw[noimpl] (Aiming) to[out=-20,in=-160] (QC2);
    \draw[noimpl] (QG)     to[out=-90,in=90] (QC2);
    \draw[noimpl] (C)      to[out=0,in=180] (PL);
    
    \end{tikzpicture}
}
\caption{\textbf{Hierarchy of assumptions.} Solid arrows encode one-way implications, dashed lines denote mutual non-implications, and {\color{CrimsonRed}red} arrows indicate conditions guaranteeing Broximal Alignment for any $t>0$. For simplicity, we assume $f$ is $L$-smooth (i.e., $\norm{\nabla f(x) - \nabla f(y)} \leq L \norm{x-y}$ $\forall x, y \in\R^d$) with domain $\dom f = \R^d$. See the appendix and \citet{karimi2016linear, hinder2019near, khanh2025star} for proofs.}
\label{fig:assumption_hierarchy}
\end{figure}
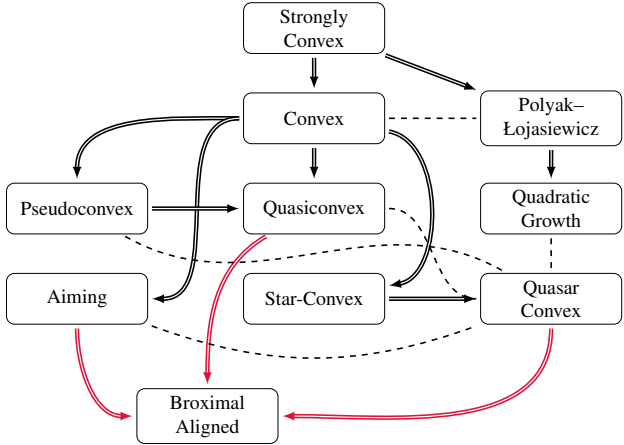

\subsection{Quasiconvexity}

Quasiconvexity is one of the most classical and widely studied relaxations of convexity, dating back to early work by \citet{arrow1961quasi}. It plays an important role in optimization theory because it preserves several global properties of convex functions while allowing for richer geometric structure. As a result, quasiconvex optimization has found applications in many areas, including economics \citep{wolfstetter1999topics, laffont2002theory}, control \citep{seiler2010quasiconvex}, computational geometry \citep{eppstein2005quasiconvex}, and computer vision \citep{ke2007quasiconvex}.

\begin{restatable}[Quasiconvexity]{definition}{QUASICONVDEF}\label{def:quasiconvex}
    A function $f: \R^d \to \R \cup \{+\infty\}$ is \emph{quasiconvex} if
    \begin{align*}
        f\parens{(1-\lambda)x + \lambda y} \leq \max\brac{f(x), f(y)}
    \end{align*}
    for all $x, y \in\R^d$ and $\lambda \in [0,1]$. If the inequality is strict for all $x \neq y$ and $\lambda \in (0,1)$, then $f$ is \emph{strictly quasiconvex}.
\end{restatable}
\Cref{def:quasiconvex} can be viewed as a relaxation of convexity obtained by replacing the convex combination of function values by a maximum. A standard consequence is that all sublevel sets of a quasiconvex function are convex (\Cref{lemma:quasi_conv_lf_conv}).

The following result shows that strict quasiconvexity is sufficient for membership in our function class.
\begin{restatable}{theorem}{QUASITHM}\label{thm:quasi_conv_as}
    Suppose that $f$ is strictly quasiconvex. Then $f\in\newclass{\mI}{t}$ for any $t>0$.
\end{restatable}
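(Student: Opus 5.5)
We verify the two assumptions separately, with $\mX=\mI$. Throughout we use the standard consequence of strict quasiconvexity that every sublevel set $L_\alpha=\brac{z: f(z)\le\alpha}$ is convex (\Cref{lemma:quasi_conv_lf_conv}). We also use that $f$ has no local minimizer that is not global. Indeed, if $f(y)<f(x)$, then strict quasiconvexity gives $f((1-\lambda)x+\lambda y)<\max\brac{f(x),f(y)}=f(x)$ for all $\lambda\in(0,1)$, so points arbitrarily close to $x$ have strictly smaller value.

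\textbf{\Cref{as:main2}.} Suppose $x\in\dom f$ and $x\notin\cX_f$. Take any $x_\star\in\cX_f$, so $f(x_\star)<f(x)$. By the observation above, the segment point $(1-\lambda)x+\lambda x_\star$ with $\lambda$ small enough lies in $\cB(x,t)$ and has value strictly below $f(x)$. Hence $x\notin\BProxSub{t}{f}{x}$, so in particular $\BProxSub{t}{f}{x}\neq\{x\}$. Taking the contrapositive gives the assumption.

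\textbf{\Cref{as:main1}.} Fix $x_\star\in\cX_f$ and take $x$ with $\dist{x}{\cX_f}>t$. Let $u\in\BProxSub{t}{f}{x}$ and set $\alpha=f(u)$. We argue in two steps.

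First, $u$ lies on the boundary, $\norm{u-x}=t$. Since $x_\star\notin\cB(x,t)$, we have $f(u)>f_\star=f(x_\star)$. If $u$ were interior to the ball, then, by the segment argument toward $x_\star$, points near $u$ inside the ball would have value $<f(u)$. This contradicts the minimality of $u$.

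Second, we use a separation argument, which is the main obstacle. The convex set $L_\alpha$ meets $\cB(x,t)$ only at minimizers of $f$ over the ball. We claim its interior-like part, the set $\brac{f<\alpha}$, which is convex and contains $x_\star$, is disjoint from $\cB(x,t)$.

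Now suppose $\inp{x-u}{x_\star-u}>0$. Then the segment from $u$ to $x_\star$ enters the open ball $\operatorname{int}\cB(x,t)$ immediately, because its direction has positive inner product with the inward normal $x-u$ at the boundary point $u$. Strict quasiconvexity gives $f<\max\brac{f(u),f(x_\star)}=\alpha$ on the open segment. Points of that segment close to $u$ therefore lie in the ball with value $<\alpha$, contradicting $u\in\BProxSub{t}{f}{x}$. Hence $\inp{x-u}{x_\star-u}\le0$.

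The only delicate points are the following. We must check that the direction argument really places nearby segment points strictly inside the ball: $\norm{u+\lambda(x_\star-u)-x}^2=t^2-2\lambda\inp{x-u}{x_\star-u}+\lambda^2\norm{x_\star-u}^2<t^2$ for small $\lambda>0$. We must also handle $f(u)=+\infty$, which cannot occur since $x\in\dom f$ gives $f(u)\le f(x)<\infty$. This completes both assumptions for arbitrary $t>0$, and indeed for every $x_\star\in\cX_f$.
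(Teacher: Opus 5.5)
Your proof is correct and is essentially the paper's argument: both obtain the contradiction for \Cref{as:main1} by finding a point of the open segment between $u$ and $x_\star$ that lies in $\cB(x,t)$ and applying strict quasiconvexity there. Both also handle \Cref{as:main2} by following the segment from $x$ toward $x_\star$; the paper phrases this through convexity of $L_f^{\leq}(x)$ together with uniqueness of the broximal minimizer.

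The one difference is the choice of that point. The paper takes the projection $p$ of $x$ onto the line through $u$ and $x_\star$, which requires checking that $p$ falls strictly before $x_\star$; the paper leaves this implicit. Your small step from $u$ avoids that check. Your boundary and separation paragraphs are not needed for the final argument.
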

This inclusion holds for \emph{every} radius $t>0$, showing that $\newclass{\mI}{t}$ contains the class of strictly quasiconvex functions for any fixed~$t$.
At the same time, unlike functions in our class, quasiconvex functions remain restrictive for many modern non-convex objectives in ML. The requirement that every sublevel set be convex excludes functions with complex basin structures, multiple disconnected minima, or other nontrivial geometric features that arise in deep learning and related applications.

\subsection{Pseudoconvexity}

Pseudoconvexity is a first-order relaxation of convexity that occupies the middle ground between convex and quasiconvex functions. Introduced in early work by \citet{mangasarian1975pseudo}, it has since been studied extensively in nonlinear programming \citep{mangasarian1994nonlinear, bazaraa1993nonlinear}.

\begin{restatable}[Pseudoconvexity]{definition}{PSEUDOCONVDEF}\label{def:pseudoconvex}
A differentiable function $f:\R^d \to \R$ is \emph{pseudoconvex} if
\begin{align*}
    \inp{\nabla f(x)}{y - x} \geq 0 \quad \implies \quad f(y) \geq f(x)
\end{align*}
for all $x,y \in \R^d$.
\end{restatable}

Every convex differentiable function is pseudoconvex, but the converse does not hold. Nevertheless, pseudoconvexity still guarantees that every stationary point is a global minimizer, and a classical result further shows that any differentiable pseudoconvex function is quasiconvex. This makes pseudoconvexity a natural setting in which to test our broximal alignment conditions.

\begin{restatable}{theorem}{PSEUDOTHM}\label{thm:pseudo_conv_as}
    Suppose that $f$ is pseudoconvex. Then $f \in \newclass{\mI}{t}$ for any $t>0$.
\end{restatable}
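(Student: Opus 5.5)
The plan is to verify the two assumptions in \Cref{def:BA} separately, with $\mX=\mI$. Both rest on two facts. First, $f$ is differentiable, so at a point where a ball-constrained minimization is attained we can write first-order optimality (KKT) conditions. Second, pseudoconvexity turns first-order information into global statements about $f$. Since $f$ is differentiable and real-valued on $\R^d$, it is continuous, hence proper and closed with $\dom f=\R^d$. Because $\cB(x,t)$ is compact, $\BProxSub{t}{f}{x}$ is nonempty for every $x$.

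\textbf{\Cref{as:main2}.} Suppose $\BProxSub{t}{f}{x}=\{x\}$. Then $x$ minimizes $f$ over $\cB(x,t)$ and lies in the interior of that ball, so it is an unconstrained local minimizer and $\nabla f(x)=0$. For any $y$ we then have $\inp{\nabla f(x)}{y-x}=0\geq 0$, and pseudoconvexity gives $f(y)\geq f(x)$. Hence $x\in\cX_f$. The same argument shows more: any $u\in\BProxSub{t}{f}{x}$ lying in the interior of $\cB(x,t)$ is a global minimizer.

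\textbf{\Cref{as:main1}.} Fix an arbitrary $x_\star\in\cX_f$; the argument will work for every choice. Take $x$ with $\operatorname{dist}(x,\cX_f)>t$ and $u\in\BProxSub{t}{f}{x}$. Since $u\in\cB(x,t)$, the distance condition gives $u\notin\cX_f$, so $f(x_\star)<f(u)$.

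The argument then proceeds in three steps.
\begin{enumerate}[label=(\alph*)]
    \item By the interior-point observation above, $u$ cannot lie in the interior of the ball, so $\norm{u-x}=t$.
    \item The constraint $\tfrac12\norm{z-x}^2\leq \tfrac12 t^2$ satisfies Slater's condition (take $z=x$), so KKT holds at $u$: there is $\lambda\geq0$ with $\nabla f(u)+\lambda(u-x)=0$. Moreover $\lambda>0$, since $\lambda=0$ would give $\nabla f(u)=0$ and, by pseudoconvexity, $u\in\cX_f$.
    \item Apply the contrapositive of pseudoconvexity at the point $u$ with $y=x_\star$. Since $f(x_\star)<f(u)$, it gives $\inp{\nabla f(u)}{x_\star-u}<0$. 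Substituting $\nabla f(u)=\lambda(x-u)$ yields $\lambda\inp{x-u}{x_\star-u}<0$, and dividing by $\lambda>0$ gives \eqref{eq:as_main}, in fact with strict inequality.
\end{enumerate}

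The only delicate point is step (b): justifying the KKT multiplier representation and ruling out $\lambda=0$. Both follow from Slater's condition and the stationarity argument already used for \Cref{as:main2}. Nothing depends on the value of $t$, so $f\in\newclass{\mI}{t}$ for every $t>0$.
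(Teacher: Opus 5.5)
Your proposal is correct and follows essentially the same route as the paper. First-order optimality of $u$ on the ball gives $\nabla f(u)=\lambda(x-u)$, where the paper uses the normal-cone lemma and you use KKT with Slater's condition; $\lambda>0$ because stationary points of a pseudoconvex function are global minimizers, and pseudoconvexity at $u$ with $y=x_\star$ then forces $\langle x-u, x_\star-u\rangle<0$, while Assumption~\ref{as:main2} is handled identically. The only cosmetic difference is that you argue directly through the contrapositive rather than by contradiction, which is exactly how the paper itself treats the generalized pseudoconvex case.
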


Although \Cref{def:pseudoconvex} is stated for differentiable functions, in \Cref{sec:gen_pseudoconv} we extend the comparison to nondifferentiable generalizations of pseudoconvexity and show that our assumptions remain strictly weaker in that broader setting as well.
   
\subsection{Quasar Convexity}

Quasar convexity \citep{hardt2018gradient, hinder2019near}, like quasiconvexity and pseudoconvexity, generalizes the notion of unimodality to higher dimensions. Informally, quasar convex functions are unimodal along all lines passing through a global minimizer \citep{hinder2019near}. In dimensions greater than one, quasar convexity is neither implied by nor implies quasiconvexity or pseudoconvexity.

\begin{restatable}[Quasar convexity]{definition}{QUASARCONVDEF}
    A differentiable function $f: \R^d \to \R$ is $\zeta$-quasar convex with respect to $x_{\star}\in\cX_f$ for some $\zeta\in(0,1]$ if
    \begin{align*}
        f(x) - f_\star \leq \frac{1}{\zeta} \inp{\nabla f(x)}{x - x_{\star}}
    \end{align*}
    for all $x \in\R^d$.
\end{restatable}
The class is parameterized by a constant $\zeta\in(0,1]$ which controls the degree of non-convexity. As $\zeta$ becomes smaller,~$f$ becomes ``more non-convex''. 
Quasar convexity appears in several practical settings, such as learning linear dynamical systems \citep{hardt2018gradient}, a problem closely related to training recurrent neural networks.
When $\zeta=1$, the above condition is called \textit{star convexity} \citep{nesterov2006cubic}, another well-known relaxation of convexity that has been hypothesized to describe the loss geometry of neural networks in large neighborhoods around global minimizers \citep{kleinberg2018alternative, zhou2019sgd}.

As the next result shows, quasar convex objectives also fall within the scope of our framework.
\begin{restatable}{theorem}{QUASARTHM}\label{thm:neg_inp_quasar}
    Suppose that $f$ is $\zeta$-quasar convex with respect to $x_{\star}\in\cX_f$. Then $f \in \newclass{\mI}{t}$ for any $t>0$.
\end{restatable}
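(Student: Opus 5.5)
The plan is to use first-order optimality of the broximal subproblem together with the quasar inequality evaluated at the broximal point $u$. Since $f$ is differentiable on all of $\R^d$ and $\cB(x,t)$ is a compact convex set with nonempty interior, the subproblem $\min_{z\in\cB(x,t)} f(z)$ has a minimizer. Any minimizer $u$ satisfies the variational inequality $\inp{\nabla f(u)}{z-u}\ge 0$ for all $z\in\cB(x,t)$. In other words, $-\nabla f(u)$ lies in the normal cone of the ball at $u$. This means that either $\nabla f(u)=0$ (when $\norm{u-x}<t$), or $\nabla f(u) = -\lambda(u-x)$ for some $\lambda\ge 0$ (when $\norm{u-x}=t$). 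The only other ingredient is that quasar convexity forces stationary points to be global minimizers: if $\nabla f(y)=0$, then $f(y)-f_\star\le \frac1\zeta\inp{\nabla f(y)}{y-x_\star}=0$.

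\textbf{\Cref{as:main2}.} Suppose $\BProxSub{t}{f}{x}=\{x\}$. Then $x$ minimizes $f$ over $\cB(x,t)$ and lies in its interior, so $\nabla f(x)=0$. By the observation above, $x\in\cX_f$.

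\textbf{\Cref{as:main1}.} I will take the same $x_\star$ with respect to which $f$ is quasar convex. Fix $x$ with $x\notin\cX_f+\cB(0,t)$, i.e.\ $\dist{x}{\cX_f}>t$, and fix $u\in\BProxSub{t}{f}{x}$.

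First, $u\notin\cX_f$, since otherwise $\dist{x}{\cX_f}\le\norm{x-u}\le t$. Hence $f(u)>f_\star$. In particular $\nabla f(u)\neq 0$, so $u$ is not an interior point of the ball. Therefore $\norm{u-x}=t$ and $\nabla f(u)=-\lambda(u-x)=\lambda(x-u)$ with $\lambda\ge0$.

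Next, plug this into quasar convexity at $u$:
\[
0<\zeta\,(f(u)-f_\star)\le \inp{\nabla f(u)}{u-x_\star}=\lambda\inp{x-u}{u-x_\star}.
\]
This forces $\lambda>0$ and $\inp{x-u}{u-x_\star}>0$, which is exactly $\inp{x-u}{x_\star-u}<0$. That is \eqref{eq:as_main}, in fact with strict inequality.

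The only step requiring care is the normal-cone characterization at a boundary point, i.e.\ the claim that $\nabla f(u)$ must be a nonnegative multiple of $x-u$. I would justify it directly rather than through general KKT theory. The ball has nonempty interior, and the constraint gradient $2(u-x)$ is nonzero on the boundary, so a constraint qualification holds. Concretely, the normal cone of a Euclidean ball at a boundary point $u$ is the ray $\{\mu(u-x):\mu\ge0\}$. Since $-\nabla f(u)$ lies in this cone, $\nabla f(u)=-\mu(u-x)=\mu(x-u)$, as required.

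Everything else is bookkeeping. The differentiability and full domain built into the definition of quasar convexity make both assumptions hold for every $t>0$.
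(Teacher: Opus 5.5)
Your proof is correct, but it takes a different route from the one the paper uses for quasar convexity.

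\textbf{The paper's route.} The paper never uses the gradient at $u$. Instead it relies on the zeroth-order consequence $f(\lambda x_\star+(1-\lambda)y)\le\zeta\lambda f_\star+(1-\zeta\lambda)f(y)$ (\Cref{eq:quasar_equiv}) and a projection argument (\Cref{lemma:quasar_proj}). Let $\Pi(x)$ be the projection of $x$ onto the line through $u$ and $x_\star$. Then $\Pi(x)\in\cB(x,t)$, so $f(u)\le f(\Pi(x))$. Convexity of the ball rules out $x_\star$ lying between $u$ and $\Pi(x)$, and the segment inequality rules out $\Pi(x)$ lying strictly between $u$ and $x_\star$. Hence $u=\lambda x_\star+(1-\lambda)\Pi(x)$ with $\lambda\in[0,1)$, and a direct computation gives $\inp{x-u}{x_\star-u}=-\lambda(1-\lambda)\norm{\Pi(x)-x_\star}^2\le 0$. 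Assumption~\ref{as:main2} is then handled by a brief appeal to connectedness of $\cX_f$.

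\textbf{Your route.} You reuse the first-order template the paper applies to pseudoconvexity (\Cref{thm:pseudo_conv_as}) and to the aiming condition (\Cref{lemma:aiming_part}). That is, you combine normal-cone optimality at $u$ with the gradient form of quasar convexity evaluated at $u$.

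\textbf{What each buys.} Your version is shorter and needs no external lemma. It also proves more: the alignment inequality is strict, and $u$ must lie on the sphere $\norm{u-x}=t$. It gives a cleaner proof of Assumption~\ref{as:main2} as well, via $\nabla f(x)=0\Rightarrow f(x)\le f_\star$. The paper's version, for Assumption~\ref{as:main1}, uses differentiability only through the segment inequality. It therefore carries over unchanged to nonsmooth functions satisfying that inequality, a derivative-free form of quasar convexity, where your gradient step is unavailable.
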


\subsection{Aiming Condition}

The \emph{aiming condition} \citep{liu2023aiming} links the gradient direction to the function value, while allowing the reference minimizer to vary with the current point. Following \citet{liu2023aiming}, we assume throughout that $f_\star = 0$. We denote by $\Pi(\cdot, \cX)$ the projection onto a set $\cX$ (i.e., $\Pi(x, \cX) \eqdef \argmin_{y \in \cX} \norm{x-y}_2^2$).

\begin{restatable}[Aiming Condition]{definition}{AIMINGDEF}\label{def:aiming}
    A differentiable function $f: \R^d \to \R$ satisfies the \emph{aiming condition} if $f_\star = 0$ and there exist $\bar{x} \in \Proj{\cX_f}{x}$ and $\theta > 0$ such that
    \begin{align}\label{eq:aiming}
        \theta f(x) \leq \inp{\nabla f(x)}{x - \bar{x}}
    \end{align}
    for all $x \in \R^d$.
\end{restatable}

The aiming condition is closely related to quasar convexity: while quasar convexity requires \eqref{eq:aiming} to hold with respect to a fixed minimizer $x_\star\in\cX_f$, aiming permits the reference point $\bar{x}$ to depend on $x$.
This yields a strictly larger class of functions, encompassing, for example, wide neural network losses \citep{liu2023aiming}. Even so, aiming remains stronger than our assumptions.

\begin{restatable}{theorem}{AIMINGTHM}\label{thm:aiming}
    Let $f$ satisfy the aiming condition and have a unique global minimizer. Then $f \in \newclass{\mI}{t}$ for any $t>0$.
\end{restatable}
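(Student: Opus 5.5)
The plan is to use uniqueness of the minimizer, $\cX_f=\{x_\star\}$, so that the point $\bar{x}$ in the aiming condition is always $x_\star$. Then \eqref{eq:aiming} reads $\theta f(y)\le \inp{\nabla f(y)}{y-x_\star}$ for all $y\in\R^d$, with $f_\star=0$; note that $\dom f=\R^d$ here. Both assumptions will follow from first-order optimality conditions for the broximal subproblem, combined with this inequality.

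\textbf{\Cref{as:main2}.} Suppose $\BProxSub{t}{f}{x}=\{x\}$.
\begin{itemize}
\item Then $x$ minimizes $f$ over $\cB(x,t)$, and $x$ is an interior point of that ball. Hence $x$ is a local minimizer of the differentiable function $f$, so $\nabla f(x)=0$.
\item Plugging into \eqref{eq:aiming} gives $\theta f(x)\le 0$. Since $f\ge f_\star=0$ and $\theta>0$, we get $f(x)=0$, i.e. $x\in\cX_f$.
\end{itemize}
The same argument shows a more general fact used below: every stationary point of $f$ is a global minimizer, hence equals $x_\star$.

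\textbf{\Cref{as:main1}.} Fix $x$ with $\norm{x-x_\star}>t$ and any $u\in\BProxSub{t}{f}{x}$.
\begin{enumerate}
\item Since $x_\star\notin\cB(x,t)$, we have $u\neq x_\star$. Because the minimizer is unique, this gives $f(u)>0$.
\item $u$ cannot lie in the interior of the ball. Otherwise it would be a local minimizer of $f$, hence stationary, hence equal to $x_\star$ by the general fact above — a contradiction. Therefore $\norm{u-x}=t$.
\item On the boundary, the constraint $g(z)=\frac12\norm{z-x}^2-\frac{t^2}{2}\le 0$ has gradient $u-x\neq 0$, so the LICQ holds. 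The KKT conditions then give $\lambda\ge 0$ with $\nabla f(u)+\lambda(u-x)=0$.
\item If $\lambda=0$, then $\nabla f(u)=0$, which again forces $u=x_\star$; so $\lambda>0$.
\item Apply \eqref{eq:aiming} at $u$:
\[
0<\theta f(u)\le \inp{\nabla f(u)}{u-x_\star}=\lambda\inp{x-u}{u-x_\star}.
\]
Dividing by $\lambda>0$ gives $\inp{x-u}{x_\star-u}<0$, which is \eqref{eq:as_main}, even with strict inequality.
\end{enumerate}

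The only delicate step is justifying the KKT multiplier representation for the possibly non-convex subproblem. This is a standard first-order necessary condition for a local minimizer over a ball with a smooth, nondegenerate constraint, so I expect it to go through.
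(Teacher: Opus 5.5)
Your proof is correct and follows the paper's argument. First-order optimality for the ball subproblem gives $\nabla f(u)=c(x-u)$ with $c>0$; the paper reads this off the normal cone of the ball in one step, where you split into interior and boundary cases and invoke KKT. The aiming inequality at $u$, with the projection forced to be $x_\star$ by uniqueness, then yields the alignment sign, and Assumption~2 follows from $\nabla f(x)=0$ at a strict ball-minimizer $x$, which is an interior point of $\mathcal{B}(x,t)$.
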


Here, we additionally require the minimum to be unique. However, \citet{liu2023aiming} do not rely on the aiming condition in isolation, but rather combine it with quadratic growth \citep{necoara2019linear}, smoothness, and interpolation assumptions.

\subsection{Operations that Preserve Assumptions~\ref{as:main1} and~\ref{as:main2}}

Lastly, we record several basic closure properties of
Assumptions~\ref{as:main1} and~\ref{as:main2}. In these results, we again work under the general conditions formulated in the geometry induced by $\mX$. These properties clarify which transformations leave the geometric structure underlying our framework unchanged, and can be useful for constructing new admissible objectives from existing ones.

We begin with monotone transformations of the objective.

\begin{restatable}{lemma}{MONOTRANS}\label{lemma:transformation-11}
    Let $h : \R^d \to \R \cup \{+\infty\}$ be a proper, closed function with a nonempty set of minimizers $\cX_h$ and let $g : \R \to \R$ be strictly increasing.
    Define the extended-real-valued function $f = g \circ h$ by
    \begin{align*}
        f(x) =
        \begin{cases}
        g(h(x)) & \text{if } h(x) \in \R, \\
        +\infty & \text{if } h(x) = + \infty.
     \end{cases}
    \end{align*}
    If $h$ satisfies Assumptions~\ref{as:main1} and~\ref{as:main2} for some $t > 0$, then $f$ also satisfies
    Assumptions~\ref{as:main1} and~\ref{as:main2} with the same $t$.
\end{restatable}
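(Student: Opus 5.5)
The whole argument reduces to one observation: composing with a strictly increasing $g$ preserves order comparisons and hence argmins. Everything in Assumptions~\ref{as:main1} and~\ref{as:main2} is stated through $\dom f$, $\cX_f$ and $\BProxSub{t}{f}{\cdot}$. So it suffices to show that these three objects coincide for $f$ and $h$. Properness and closedness of $f$ must also be checked, since the assumptions are stated for proper closed functions.

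\emph{Step 1: domain and minimizers.} By the definition of $f$, $f(x)<+\infty$ iff $h(x)<+\infty$, so $\dom f=\dom h\neq\emptyset$ and $f$ is proper. Since $g$ is strictly increasing, for $x,z\in\dom h$ we have $f(z)\le f(x)$ iff $h(z)\le h(x)$. Values $+\infty$ are handled consistently, because both functions equal $+\infty$ exactly off $\dom h$. Hence, for any set $S$ with $S\cap\dom h\neq\emptyset$, $\argmin_S f=\argmin_S h$.

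Taking $S=\R^d$ gives $\cX_f=\cX_h$. These sets are nonempty, and $f_\star=g(h_\star)$. Taking $S=\cB_{\mX}(x,t)$ for $x\in\dom h$ (this ball contains $x$, so it meets the domain) gives $\BProxSub{t}{f}{x}=\BProxSub{t}{h}{x}$.

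\emph{Step 2: transfer the assumptions.} Let $x\in\dom f\setminus(\cX_f+\cB_{\mX}(0,t))$. By Step 1, this set equals $\dom h\setminus(\cX_h+\cB_{\mX}(0,t))$. Moreover, every $u\in\BProxSub{t}{f}{x}$ lies in $\BProxSub{t}{h}{x}$. The $x_\star\in\cX_h=\cX_f$ provided by Assumption~\ref{as:main1} for $h$ therefore gives $\inp{x-u}{x_\star-u}_{\mX}\le0$, which is Assumption~\ref{as:main1} for $f$.

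Similarly, suppose $\BProxSub{t}{f}{x}=\{x\}$. Then $\BProxSub{t}{h}{x}=\{x\}$, so Assumption~\ref{as:main2} for $h$ gives $x\in\cX_h=\cX_f$, which is Assumption~\ref{as:main2} for $f$.

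\emph{Main obstacle: closedness of $f$.} This is the only delicate point, since $g$ is not assumed continuous. A strictly increasing $g$ that jumps, for example $g(s)=s$ for $s<0$ and $g(s)=s+1$ for $s\ge0$, is lower semicontinuous only when $g$ is right-continuous. The plan is as follows:
\begin{enumerate}[label=(\alph*)]
\item First try to show that sublevel sets $\{f\le\alpha\}=\{h\le\beta\}$ are closed, where $\beta=\sup\{s:g(s)\le\alpha\}$. This identity requires $g(\beta)\le\alpha$, i.e.\ right-continuity of $g$.
\item If right-continuity fails, note that the proof of Steps 1--2 never uses closedness. It uses only properness and the existence of minimizers of $f$ over $\R^d$ and over balls, and these are inherited verbatim from $h$ by the argmin identity.
\end{enumerate}
So, in the general case, I would either remark that closedness is needed only for the existence of broximal points, which transfers directly, or add right-continuity (e.g.\ continuity) of $g$ to guarantee closedness of $f$.
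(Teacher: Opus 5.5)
Your Steps 1--2 are correct and follow essentially the paper's proof. The paper first gets $\dom f=\dom h$ and $\cX_f=\cX_h$ from strict monotonicity of $g$. It then reruns that argument on $h+\delta_{\cB_{\mX}(x,t)}$ and $f+\delta_{\cB_{\mX}(x,t)}$ to obtain $\BProxSub{t}{f}{x}=\BProxSub{t}{h}{x}$, which is your argmin-over-$S$ identity. After that, both assumptions transfer exactly as you describe.

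The paper never checks that $f$ is closed, so you are right to flag it. Your option (b) is the correct resolution: the transfer never uses closedness of $f$, and nonemptiness of $\BProxSub{t}{f}{x}$ is inherited from $h$.

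Your analysis in (a) and the fix you propose are wrong, though.
\begin{enumerate}
\item For increasing $g$, lower semicontinuity is equivalent to \emph{left}-continuity, not right-continuity. Likewise, $g(\beta)\le\alpha$ follows from left-continuity, because $g(s)\le\alpha$ for every $s<\beta$.
\item Your own example $g$ is right-continuous and already breaks closedness. Take $h(x)=\norm{x}^2-1$: then $f=1$ on the unit sphere, but $f\to0$ as you approach the sphere from inside.
\item Even continuity of $g$ is not enough, because $f$ is set to $+\infty$ off $\dom h$. Take $g=\arctan$ and $h(x)=\norm{x}^2/(1-\norm{x}^2)$ on the open unit ball, with $h=+\infty$ elsewhere. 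This $h$ is proper, closed and lies in $\newclass{\mI}{t}$ for every $t>0$. Yet $f\to\pi/2<+\infty=f$ at the unit sphere, so $f$ is not lower semicontinuous there.
\end{enumerate}

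So if closedness is read as part of the conclusion, the lemma needs an extra hypothesis. A correct sufficient one is: $g$ is left-continuous, and either $\sup g=+\infty$ or $\dom h$ is closed.
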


Next, we consider changes of coordinates, showing that our assumptions are invariant under transformations that preserve the geometry induced by $\mX$.

\begin{restatable}[$\mX$--orthogonal affine transformations]{lemma}{EUCISO}
    Let $h : \R^d \to \R \cup \{+\infty\}$ be a proper, closed function with nonempty set of minimizers $\cX_h \neq \emptyset$. Suppose that $h$ satisfies Assumptions~\ref{as:main1} and~\ref{as:main2} for some $t>0$.
    Let $\phi : \R^d \to \R^d$ be defined by
    \begin{align*}
        \phi(x) = \mQ x + b,
    \end{align*}
    where $\mQ \in \R^{d\times d}$ is invertible and satisfies $\mQ^\top \mX \mQ = \mX$.
    Define $f : \R^d \to \R \cup \{+\infty\}$ by
    \begin{align*}
        f(x) = \begin{cases}
        h(\phi(x)) & \quad x \in \phi^{-1}(\dom h), \\
        + \infty & \quad \text{otherwise}.
     \end{cases}
    \end{align*}
    Then $f$ is proper, has nonempty minimizer set $\cX_f = \phi^{-1}(\cX_h)$, and satisfies Assumptions~\ref{as:main1} and~\ref{as:main2} with the same radius $t$.
\end{restatable}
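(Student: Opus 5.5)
The whole argument rests on one fact: $\phi$ is an $\mX$-isometry, meaning $\norm{\phi(x)-\phi(y)}_{\mX}=\norm{x-y}_{\mX}$ and, more generally, $\inp{\phi(x)-\phi(z)}{\phi(y)-\phi(z)}_{\mX}=\inp{x-z}{y-z}_{\mX}$. Both follow immediately from $\phi(x)-\phi(z)=\mQ(x-z)$ and $\mQ^\top\mX\mQ=\mX$. We first record the basic properties of $f$. Since $\phi$ is an invertible affine map, $f=h\circ\phi$ on $\phi^{-1}(\dom h)$ and $f=+\infty$ elsewhere. Hence $\dom f=\phi^{-1}(\dom h)\neq\emptyset$, so $f$ is proper, and $f$ is closed because it is the composition of a closed function with a continuous map. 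The value sets of $f$ and $h$ coincide, so $f_\star=h_\star$ and $\cX_f=\phi^{-1}(\cX_h)$.

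The key step is to show that $\phi$ conjugates the broximal operators:
\begin{align*}
\phi\big(\BProxSub{t}{f}{x}\big)=\BProxSub{t}{h}{\phi(x)} .
\end{align*}
By the isometry, $\phi$ maps $\cB_{\mX}(x,t)$ bijectively onto $\cB_{\mX}(\phi(x),t)$. Moreover, $f(z)=h(\phi(z))$ for every $z$, including the case where both sides are $+\infty$. It follows that $z$ minimizes $f$ over the first ball exactly when $\phi(z)$ minimizes $h$ over the second. The isometry also transports neighborhoods: $\phi(\cX_f+\cB_{\mX}(0,t))=\cX_h+\cB_{\mX}(0,t)$, because $\operatorname{dist}_{\mX}(x,\cX_f)=\operatorname{dist}_{\mX}(\phi(x),\cX_h)$.

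For \Cref{as:main1}, let $x_\star^h\in\cX_h$ be the minimizer that $h$ provides, and set $x_\star\eqdef\phi^{-1}(x_\star^h)\in\cX_f$. Take $x\in\dom f\setminus(\cX_f+\cB_{\mX}(0,t))$ and $u\in\BProxSub{t}{f}{x}$. Then $\phi(x)\in\dom h\setminus(\cX_h+\cB_{\mX}(0,t))$ and $\phi(u)\in\BProxSub{t}{h}{\phi(x)}$. The inner-product invariance gives
\begin{align*}
\inp{x-u}{x_\star-u}_{\mX}=\inp{\phi(x)-\phi(u)}{x_\star^h-\phi(u)}_{\mX}\le 0 .
\end{align*}

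For \Cref{as:main2}, suppose $\BProxSub{t}{f}{x}=\{x\}$. By the conjugation identity and the bijectivity of $\phi$, $\BProxSub{t}{h}{\phi(x)}=\{\phi(x)\}$. \Cref{as:main2} for $h$ then gives $\phi(x)\in\cX_h$, so $x\in\cX_f$.

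No step is a real obstacle. The only point needing care is the bookkeeping in the conjugation identity: we must check that $\phi$ is a bijection between the balls, and that points with infinite value are handled consistently (they do not affect the argmin because $f$ is proper on the ball whenever $x\in\dom f$). Note also that $\mQ$ is invertible automatically, since $\mQ^\top\mX\mQ=\mX\succ0$ forces it.
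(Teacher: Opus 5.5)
Your proof is correct and follows essentially the same route as the paper. You use $\mQ^\top\mX\mQ=\mX$ to show that $\phi$ preserves $\mX$-distances and inner products, deduce $\BProxSub{t}{f}{x}=\phi^{-1}\bigl(\BProxSub{t}{h}{\phi(x)}\bigr)$, and transfer both assumptions via $x_\star=\phi^{-1}(x_\star^h)$; you are in fact slightly more careful than the paper, since you verify that $f$ is closed and fix $x_\star$ before quantifying over $x$, as Assumption~\ref{as:main1} requires.
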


Finally, we note that simple affine transformations of the objective value itself also preserve the assumptions.

\begin{restatable}[Affine transformations of the function]{lemma}{AFFFUNC}
    Let $g : \R^d \to \R \cup \{+\infty\}$ be a proper, closed function with nonempty minimizer set $\cX_g$, satisfying Assumptions~\ref{as:main1} and~\ref{as:main2} for radius $t>0$. Let $a > 0$ and $b \in \R$, and define
    \begin{align*}
        f(x) = a\, g(x) + b.
    \end{align*}
    Then $f$ is proper, has nonempty minimizer set $\cX_f = \cX_g$, and satisfies Assumptions~\ref{as:main1} and~\ref{as:main2} with the same radius $t$.
\end{restatable}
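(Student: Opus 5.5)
The plan is to show that positive affine rescaling leaves every object appearing in Assumptions~\ref{as:main1} and~\ref{as:main2} unchanged: the domain, the minimizer set, and the broximal operator. Once these equalities are in hand, both assumptions transfer verbatim from $g$ to $f$. A shorter route is to note that $s\mapsto as+b$ is strictly increasing and invoke \Cref{lemma:transformation-11} with $h=g$. The only check needed there is that $f$ coincides with the extended-valued composition defined in that lemma, and it does, since $a\cdot(+\infty)+b=+\infty$ under the usual convention. I would nonetheless also record the direct argument, because it is short and self-contained.

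\textbf{Domain, closedness and minimizers.} Since $a>0$ and $b$ is finite, $f(x)<+\infty$ if and only if $g(x)<+\infty$. Hence $\dom f=\dom g\neq\emptyset$ and $f$ is proper. Closedness follows because the sublevel sets satisfy $\{f\le\alpha\}=\{g\le(\alpha-b)/a\}$, and the latter are closed. For any $x,y$ we have $f(x)\le f(y)$ if and only if $g(x)\le g(y)$, because $a>0$. So $x$ minimizes $f$ exactly when it minimizes $g$, which gives $\cX_f=\cX_g\neq\emptyset$.

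\textbf{Broximal operator.} By the same order-preservation, restricted to the ball $\cB_{\mX}(x,t)$, we get $\BProxSub{t}{f}{x}=\BProxSub{t}{g}{x}$ for every $x$. This step uses only that minimizers of a function over a set are determined by the order the function induces on that set.

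\textbf{Conclusion.} For Assumption~\ref{as:main1}, take the same $x_\star\in\cX_g=\cX_f$ that works for $g$. The exceptional region $\dom f\setminus\{\cX_f+\cB_{\mX}(0,t)\}$ is the same set as for $g$, the candidate points $u$ range over the same broximal set, and the inequality \eqref{eq:as_main} does not involve function values at all, so it holds for $f$. For Assumption~\ref{as:main2}, $\BProxSub{t}{f}{x}=\{x\}$ means $\BProxSub{t}{g}{x}=\{x\}$, which gives $x\in\cX_g=\cX_f$.

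I do not expect a real obstacle. The only point needing care is the convention for $+\infty$ together with the closedness check, both handled above.
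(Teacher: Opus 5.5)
Your proposal is correct and follows the paper's proof essentially step for step. The paper likewise establishes $\cX_f=\cX_g$ and $\BProxSub{t}{f}{x}=\BProxSub{t}{g}{x}$ via the argmin equivalence, transfers both assumptions verbatim, and remarks that the result is a corollary of \Cref{lemma:transformation-11}; your sublevel-set check of closedness is a small detail the paper leaves implicit.
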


\section{Practical Considerations}\label{sec:practice}

Our theoretical developments assume access to the broximal oracle, that is, the ability to exactly minimize $f$ over balls of the form $\cB_{\mX}(x, t)$. For many objectives of practical interest, particularly in modern ML, such subproblems are computationally intractable. Nevertheless, the framework remains relevant in practice through approximation strategies.

The first approach is to \emph{approximately solve the broximal subproblem} itself. For example, one may consider sampling-based approaches or apply an iterative optimization method to minimize the original objective $f$ over the constraint set $\cB_{\mX}(x_k,t_k)$ up to a prescribed accuracy. This strategy preserves the structure of the original problem, at the cost of additional inner-loop computation.

The second approach is to \emph{replace the original objective by a simpler local model}. Typical choices include first-order or quadratic approximations, such as linearizing $f$ around the current iterate and solving the resulting trust-region subproblem. Such model-based subproblems are often tractable and, in certain cases, admit closed-form solutions.

The effectiveness of these approximations is not limited to Euclidean settings. On the contrary, their benefits often become more pronounced in non-Euclidean geometries, where the norm $\norm{\cdot}_{\mX}$ is replaced by a certain operator norm.
This viewpoint underlies several modern optimization methods. In particular, the \algnamesmall{Muon} optimizer \citep{jordan2024muon} can be interpreted as performing approximate broximal steps by minimizing linearized local models over non-Euclidean balls induced by the spectral norm. We refer the reader to \citet{gruntkowska2025non} for a more detailed discussion and further connections.

\section{Summary and Outlook}\label{sec:summary}

This paper introduces a new analytic tool for global non-convex optimization that avoids the limitations of stationarity-based theory. Revisiting the Ball Proximal Point Method ({\newalg}), we identify a novel geometric condition --Broximal Alignment -- that guarantees global convergence without any convexity, smoothness, or Lipschitz assumptions. The resulting function class, $\newclass{\mX}{t}$, is unusually broad: it allows disconnected minimizer sets, multiple local minima, and highly non-convex landscapes, yet still ensures that {\newalg} reaches a global minimizer in finitely many iterations.
Our theory is strong in two ways. First, it provides global optimality guarantees in settings where gradient-based methods do not certify meaningful progress. Second, it strictly extends several classical non-convex frameworks, showing that these are all special cases of our geometric alignment principle. This positions our work as a conceptual foundation for global non-convex optimization.

At the same time, the framework is intentionally conceptual: the broximal oracle is generally intractable, so our results do not directly yield efficient algorithms. This opens a rich direction for future work: developing practical approximations of broximal steps to bridge the gap between our global theory and modern ML practice.

\section*{Acknowledgements}

This work was supported by funding from King Abdullah University of Science and Technology (KAUST): i) KAUST Baseline Research Scheme, ii) Center of Excellence for Generative AI (award no. 5940). K.G. is supported by the Google PhD Fellowship.





\bibliography{example_paper}

@article{nesterov2006cubic,
  title={Cubic regularization of Newton method and its global performance},
  author={Yurii Nesterov and Boris Polyak},
  journal={Mathematical Programming},
  year={2006},
  volume={108},
  pages={177-205}
}

@inproceedings{bartlett2018gradient,
  title={Gradient descent with identity initialization efficiently learns positive definite linear transformations by deep residual networks},
  author={Bartlett, Peter and Helmbold, Dave and Long, Philip},
  booktitle={International conference on machine learning},
  pages={521--530},
  year={2018},
  organization={PMLR}
}

@article{hinder2019near,
  title={Near-optimal methods for minimizing star-convex functions and beyond},
  author={Hinder, Oliver and Sidford, Aaron and Sohoni, Nimit S},
  journal={arXiv preprint arXiv:1906.11985},
  year={2019}
}

@inproceedings{kleinberg2018alternative,
  title={An alternative view: When does SGD escape local minima?},
  author={Kleinberg, Bobby and Li, Yuanzhi and Yuan, Yang},
  booktitle={International conference on machine learning},
  pages={2698--2707},
  year={2018},
  organization={PMLR}
}

@article{zhou2019sgd,
  title={{SGD} converges to global minimum in deep learning via star-convex path},
  author={Zhou, Yi and Yang, Junjie and Zhang, Huishuai and Liang, Yingbin and Tarokh, Vahid},
  journal={arXiv preprint arXiv:1901.00451},
  year={2019}
}

@article{hardt2018gradient,
  title={Gradient descent learns linear dynamical systems},
  author={Hardt, Moritz and Ma, Tengyu and Recht, Benjamin},
  journal={Journal of Machine Learning Research},
  volume={19},
  number={29},
  pages={1--44},
  year={2018}
}

@inproceedings{zhou2017stochastic,
    author = {Zhou, Zhengyuan and Mertikopoulos, Panayotis and Bambos, Nicholas and Boyd, Stephen and Glynn, Peter W},
    booktitle = {Advances in Neural Information Processing Systems},
    editor = {I. Guyon and U. Von Luxburg and S. Bengio and H. Wallach and R. Fergus and S. Vishwanathan and R. Garnett},
    pages = {},
    publisher = {Curran Associates, Inc.},
    title = {Stochastic Mirror Descent in Variationally Coherent Optimization Problems},
    volume = {30},
    year = {2017}
}

@article{craven1985invex,
    author = {Craven, B. and Glover, B.},
    year = {1985},
    month = {08},
    pages = {1 - 20},
    title = {Invex Functions and Duality},
    volume = {39},
    journal = {Journal of the Australian Mathematical Society}
}

@article{smith2017bayesian,
  title={A {B}ayesian perspective on generalization and stochastic gradient descent},
  author={Smith, Samuel L and Le, Quoc V},
  journal={arXiv preprint arXiv:1710.06451},
  year={2017}
}

@misc{jordan2024muon,
  author       = {Keller Jordan and Yuchen Jin and Vlado Boza and Jiacheng You and Franz Cesista and Laker Newhouse and Jeremy Bernstein},
  title        = {Muon: An optimizer for hidden layers in neural networks},
  year         = {2024}
}

@article{liu2023aiming,
  title={Aiming towards the minimizers: fast convergence of {SGD} for overparametrized problems},
  author={Liu, Chaoyue and Drusvyatskiy, Dmitriy and Belkin, Misha and Davis, Damek and Ma, Yian},
  journal={Advances in neural information processing systems},
  volume={36},
  pages={60748--60767},
  year={2023}
}

@incollection{mangasarian1975pseudo,
    title = {Pseudo-convex functions},
    editor = {W.T. ZIEMBA and R.G. VICKSON},
    booktitle = {Stochastic Optimization Models in Finance},
    publisher = {Academic Press},
    pages = {23-32},
    year = {1975},
    isbn = {978-0-12-780850-5},
    author = {Olvi L. Mangasarian}
}

@book{mangasarian1994nonlinear,
    title={Nonlinear Programming: Theory and Algorithms},
    publisher={SIAM},
    author={Olvi L. Mangasarian},
    place={Philadelphia},
    year={1994}
}

@article{bonnans1995second,
    author = {Joseph Frédéric Bonnans and Alexander Ioffe},
    journal = {Mathematics of Operations Research},
    number = {4},
    pages = {801--817},
    publisher = {INFORMS},
    title = {Second-Order Sufficiency and Quadratic Growth for Nonisolated Minima},
    urldate = {2026-01-18},
    volume = {20},
    year = {1995}
}

@article{fornasier2024consensus,
    author = {Fornasier, Massimo and Klock, Timo and Riedl, Konstantin},
    title = {Consensus-Based Optimization Methods Converge Globally},
    journal = {SIAM Journal on Optimization},
    volume = {34},
    number = {3},
    pages = {2973-3004},
    year = {2024},
    doi = {10.1137/22M1527805}
}

@article{fornasier2020consensus,
  title={Consensus-based optimization on hypersurfaces: Well-posedness and mean-field limit},
  author={Fornasier, Massimo and Huang, Hui and Pareschi, Lorenzo and S{\"u}nnen, Philippe},
  journal={Mathematical Models and Methods in Applied Sciences},
  volume={30},
  number={14},
  pages={2725--2751},
  year={2020},
  publisher={World Scientific}
}

@article{fornasier2025regularity,
  title={Regularity and positivity of solutions of the Consensus-Based Optimization equation: unconditional global convergence},
  author={Fornasier, Massimo and Sun, Lukang},
  journal={arXiv preprint arXiv:2502.01434},
  year={2025}
}

@article{kangal2018subspace,
    author = {Kangal, Fatih and Meerbergen, Karl and Mengi, Emre and Michiels, Wim},
    title = {A Subspace Method for Large-Scale Eigenvalue Optimization},
    journal = {SIAM Journal on Matrix Analysis and Applications},
    volume = {39},
    number = {1},
    pages = {48-82},
    year = {2018},
    doi = {10.1137/16M1070025}
}

@article{carmon2020acceleration,
  title={Acceleration with a ball optimization oracle},
  author={Carmon, Yair and Jambulapati, Arun and Jiang, Qijia and Jin, Yujia and Lee, Yin Tat and Sidford, Aaron and Tian, Kevin},
  journal={Advances in Neural Information Processing Systems},
  volume={33},
  pages={19052--19063},
  year={2020}
}

@inproceedings{carmon2021thinking,
  title={Thinking inside the ball: Near-optimal minimization of the maximal loss},
  author={Carmon, Yair and Jambulapati, Arun and Jin, Yujia and Sidford, Aaron},
  booktitle={Conference on Learning Theory},
  pages={866--882},
  year={2021},
  organization={PMLR}
}

@article{asi2021stochastic,
  title={Stochastic bias-reduced gradient methods},
  author={Asi, Hilal and Carmon, Yair and Jambulapati, Arun and Jin, Yujia and Sidford, Aaron},
  journal={Advances in Neural Information Processing Systems},
  volume={34},
  pages={10810--10822},
  year={2021}
}

@inproceedings{jin2017escape,
  title={How to escape saddle points efficiently},
  author={Jin, Chi and Ge, Rong and Netrapalli, Praneeth and Kakade, Sham M and Jordan, Michael I},
  booktitle={International conference on machine learning},
  pages={1724--1732},
  year={2017},
  organization={PMLR}
}

@inproceedings{karimi2016linear,
  title={Linear convergence of gradient and proximal-gradient methods under the polyak-{\l}ojasiewicz condition},
  author={Karimi, Hamed and Nutini, Julie and Schmidt, Mark},
  booktitle={Joint European conference on machine learning and knowledge discovery in databases},
  pages={795--811},
  year={2016},
  organization={Springer}
}

@article{khanh2025star,
  title={Star Quasiconvexity: an Unified Approach for Linear Convergence of First-Order Methods Beyond Convexity},
  author={Khanh, Phan Quoc and Lara, Felipe},
  journal={arXiv preprint arXiv:2510.24981},
  year={2025}
}

@article{polyak1963gradient,
    author = {Polyak, Boris},
    year = {1963},
    month = {12},
    pages = {864-878},
    title = {Gradient methods for the minimisation of functionals},
    volume = {3},
    journal = {USSR Computational Mathematics and Mathematical Physics}
}

@article{lojasiewicz1963topological,
    author = {{\L}ojasiewicz, Stanis{\l}aw},
    year = {1963},
    pages = {87–89},
    title = {A topological property of real analytic subsets (in French)},
    journal = {Coll. du CNRS, Les {\'e}quations, aux d{\'e}riv{\'e}es partielles}
}

@article{necoara2019linear,
  title={Linear convergence of first order methods for non-strongly convex optimization},
  author={Necoara, Ion and Nesterov, Yu and Glineur, Francois},
  journal={Mathematical programming},
  volume={175},
  number={1},
  pages={69--107},
  year={2019},
  publisher={Springer}
}

@article{jin2021nonconvex,
  title={On nonconvex optimization for machine learning: Gradients, stochasticity, and saddle points},
  author={Jin, Chi and Netrapalli, Praneeth and Ge, Rong and Kakade, Sham M and Jordan, Michael I},
  journal={Journal of the ACM (JACM)},
  volume={68},
  number={2},
  pages={1--29},
  year={2021},
  publisher={ACM New York, NY, USA}
}

@article{gruntkowska2025ball,
  title={The {B}all-{P}roximal (="{B}roximal") {P}oint {M}ethod: a New Algorithm, Convergence Theory, and Applications},
  author={Gruntkowska, Kaja and Li, Hanmin and Rane, Aadi and Richt{\'a}rik, Peter},
  journal={arXiv preprint arXiv:2502.02002},
  year={2025}
}

@article{kovalev2025understanding,
  title={Understanding Gradient Orthogonalization for Deep Learning via Non-{E}uclidean Trust-Region Optimization},
  author={Kovalev, Dmitry},
  journal={arXiv preprint arXiv:2503.12645},
  year={2025}
}

@inproceedings{ge2017no,
  title        = {No spurious local minima in nonconvex low rank problems: A unified geometric analysis},
  author       = {Ge, Rong and Jin, Chi and Zheng, Yi},
  booktitle    = {International Conference on Machine Learning},
  pages        = {1233--1242},
  year         = {2017},
  organization = {PMLR}
}

@article{defazio2025gradients,
  title={Why Gradients Rapidly Increase Near the End of Training},
  author={Defazio, Aaron},
  journal={arXiv preprint arXiv:2506.02285},
  year={2025}
}

@ARTICLE{koren2009matrix,
  author={Koren, Yehuda and Bell, Robert and Volinsky, Chris},
  journal={Computer}, 
  title={Matrix Factorization Techniques for Recommender Systems}, 
  year={2009},
  volume={42},
  number={8},
  pages={30-37},
  doi={10.1109/MC.2009.263}
}

@article{candes2015phase,
  title={Phase retrieval via Wirtinger flow: Theory and algorithms},
  author={Candes, Emmanuel J and Li, Xiaodong and Soltanolkotabi, Mahdi},
  journal={IEEE Transactions on Information Theory},
  volume={61},
  number={4},
  pages={1985--2007},
  year={2015},
  publisher={IEEE}
}

@inproceedings{meka2008rank,
    author = {Meka, Raghu and Jain, Prateek and Caramanis, Constantine and Dhillon, Inderjit S.},
    title = {Rank minimization via online learning},
    year = {2008},
    isbn = {9781605582054},
    publisher = {Association for Computing Machinery},
    address = {New York, NY, USA},
    doi = {10.1145/1390156.1390239},
    booktitle = {Proceedings of the 25th International Conference on Machine Learning},
    pages = {656–663},
    numpages = {8},
    location = {Helsinki, Finland},
    series = {ICML 2008}
}

@article{jain2017non,
  title={Non-convex optimization for machine learning},
  author={Jain, Prateek and Kar, Purushottam},
  journal={arXiv preprint arXiv:1712.07897},
  year={2017}
}

@article{murty1987someNP,
  title   = {Some {NP}-complete problems in quadratic and nonlinear programming},
  author  = {Katta G. Murty and Santosh N. Kabadi},
  journal = {Mathematical Programming},
  year    = {1987},
  volume  = {39},
  pages   = {117-129}
}

@book{nemirovski1983problem,
  title     = {Problem Complexity and Method Efficiency in Optimization},
  author    = {Nemirovski, A.S. and Yudin, D.B.},
  publisher = {Wiley},
  year      = {1983}
}

@article{nemirovskii1985optimal,
  title   = {Optimal methods of smooth convex minimization},
  journal = {USSR Computational Mathematics and Mathematical Physics},
  volume  = {25},
  number  = {2},
  pages   = {21-30},
  year    = {1985},
  issn    = {0041-5553},
  author  = {A.S. Nemirovski and Y.E. Nesterov}
}

@book{nesterov2003introductory,
  title     = {Introductory lectures on convex optimization: A basic course},
  author    = {Nesterov, Yurii},
  volume    = {87},
  year      = {2003},
  publisher = {Springer Science \& Business Media}
}

@article{pascanu2014saddle,
  title={On the saddle point problem for non-convex optimization},
  author={Pascanu, Razvan and Dauphin, Yann N and Ganguli, Surya and Bengio, Yoshua},
  journal={arXiv preprint arXiv:1405.4604},
  year={2014}
}

@article{patterson1934fourier,
  title = {A Fourier Series Method for the Determination of the Components of Interatomic Distances in Crystals},
  author = {Patterson, A. L.},
  journal = {Phys. Rev.},
  volume = {46},
  issue = {5},
  pages = {372--376},
  year = {1934},
  month = {Sep},
  publisher = {American Physical Society},
  doi = {10.1103/PhysRev.46.372}
}

@book{goodfellow2016deep,
    title={Deep Learning},
    author={Ian Goodfellow and Yoshua Bengio and Aaron Courville},
    publisher={MIT Press},
    year={2016}
}

@article{lecun2015deep,
  title = {Deep learning},
  author = {LeCun, Yann and Bengio, Yoshua and Hinton, Geoffrey},
  journal = {Nature},
  volume = {521},
  pages = {436--444},
  year = {2015},
  doi = {10.1038/nature14539}
}

@inproceedings{safran2018spurious,
  title={Spurious local minima are common in two-layer relu neural networks},
  author={Safran, Itay and Shamir, Ohad},
  booktitle={International conference on machine learning},
  pages={4433--4441},
  year={2018},
  organization={PMLR}
}

@article{vidal2017mathematics,
  title={Mathematics of deep learning},
  author={Vidal, Rene and Bruna, Joan and Giryes, Raja and Soatto, Stefano},
  journal={arXiv preprint arXiv:1712.04741},
  year={2017}
}

@article{ahn2023escape,
  title={How to escape sharp minima with random perturbations},
  author={Ahn, Kwangjun and Jadbabaie, Ali and Sra, Suvrit},
  journal={arXiv preprint arXiv:2305.15659},
  year={2023}
}

@article{rockafellar1976monotone,
  title     = {Monotone operators and the proximal point algorithm},
  author    = {Rockafellar, R Tyrrell},
  journal   = {SIAM Journal on Control and Optimization},
  volume    = {14},
  number    = {5},
  pages     = {877--898},
  year      = {1976},
  publisher = {SIAM}
}

@incollection{danilova2022recent,
  title     = {Recent theoretical advances in non-convex optimization},
  author    = {Danilova, Marina and Dvurechensky, Pavel and Gasnikov, Alexander and Gorbunov, Eduard and Guminov, Sergey and Kamzolov, Dmitry and Shibaev, Innokentiy},
  booktitle = {High-Dimensional Optimization and Probability: With a View Towards Data Science},
  pages     = {79--163},
  year      = {2022},
  publisher = {Springer}
}

@article{feizi2017porcupine,
  title   = {Porcupine neural networks: (almost) all local optima are global},
  author  = {Feizi, Soheil and Javadi, Hamid and Zhang, Jesse and Tse, David},
  journal = {arXiv preprint arXiv:1710.02196},
  year    = {2017}
}

@article{shin2022effects,
  title     = {Effects of depth, width, and initialization: A convergence analysis of layer-wise training for deep linear neural networks},
  author    = {Shin, Yeonjong},
  journal   = {Analysis and Applications},
  volume    = {20},
  number    = {01},
  pages     = {73--119},
  year      = {2022},
  publisher = {World Scientific}
}

@inproceedings{allen2019convergence,
  title        = {A convergence theory for deep learning via over-parameterization},
  author       = {Allen-Zhu, Zeyuan and Li, Yuanzhi and Song, Zhao},
  booktitle    = {International conference on machine learning},
  pages        = {242--252},
  year         = {2019},
  organization = {PMLR}
}

@INPROCEEDINGS{haeffele2017global,
  author={Haeffele, Benjamin D. and Vidal, René},
  booktitle={2017 IEEE Conference on Computer Vision and Pattern Recognition (CVPR)}, 
  title={Global Optimality in Neural Network Training}, 
  year={2017},
  volume={},
  number={},
  pages={4390-4398},
  doi={10.1109/CVPR.2017.467}}

@article{soltanolkotabi2018theoretical,
  title={Theoretical insights into the optimization landscape of over-parameterized shallow neural networks},
  author={Soltanolkotabi, Mahdi and Javanmard, Adel and Lee, Jason D},
  journal={IEEE Transactions on Information Theory},
  volume={65},
  number={2},
  pages={742--769},
  year={2018},
  publisher={IEEE}
}

@article{gruntkowska2025non,
  title={Non-Euclidean Broximal Point Method: A Blueprint for Geometry-Aware Optimization},
  author={Gruntkowska, Kaja and Richt{\'a}rik, Peter},
  journal={arXiv preprint arXiv:2510.00823},
  year={2025}
}

@article{arrow1961quasi,
 author = {Kenneth J. Arrow and Alain C. Enthoven},
 journal = {Econometrica},
 number = {4},
 pages = {779--800},
 publisher = {[Wiley, Econometric Society]},
 title = {Quasi-Concave Programming},
 urldate = {2026-01-14},
 volume = {29},
 year = {1961}
}

@book{laffont2002theory,
    ISBN = {9780691091846},
    author = {Jean-Jacques Laffont and David Martimort},
    publisher = {Princeton University Press},
    title = {The Theory of Incentives: The Principal-Agent Model},
    urldate = {2026-01-15},
    year = {2002}
}

@book{wolfstetter1999topics,
    place={Cambridge},
    title={Topics in Microeconomics: Industrial Organization, Auctions, and Incentives},
    publisher={Cambridge University Press},
    author={Wolfstetter, Elmar},
    year={1999}
}

@ARTICLE{ke2007quasiconvex,
  author={Ke, Qifa and Kanade, Takeo},
  journal={IEEE Transactions on Pattern Analysis and Machine Intelligence}, 
  title={Quasiconvex Optimization for Robust Geometric Reconstruction}, 
  year={2007},
  volume={29},
  number={10},
  pages={1834-1847},
  doi={10.1109/TPAMI.2007.1083}}

@INPROCEEDINGS{seiler2010quasiconvex,
  author={Seiler, Peter and Balas, Gary J.},
  booktitle={49th IEEE Conference on Decision and Control (CDC)}, 
  title={Quasiconvex sum-of-squares programming}, 
  year={2010},
  volume={},
  number={},
  pages={3337-3342},
  doi={10.1109/CDC.2010.5717672}}

@article{eppstein2005quasiconvex,
  title={Quasiconvex programming},
  author={Eppstein, David},
  journal={Combinatorial and Computational Geometry},
  volume={52},
  number={287-331},
  pages={3},
  year={2005},
  publisher={Cambridge University Press}
}

@article{ge2016matrix,
  title={Matrix completion has no spurious local minimum},
  author={Ge, Rong and Lee, Jason D and Ma, Tengyu},
  journal={Advances in neural information processing systems},
  volume={29},
  year={2016}
}

@book{bazaraa1993nonlinear,
    title={Nonlinear Programming: Theory and Algorithms},
    publisher={John Wiley \& Sons, Ltd},
    author={Bazaraa, Mokhtar S. and Sherali, Hanif D. and C. M. Shetty},
    place={New York},
    year={1993}
}
\bibliographystyle{icml2026}

\newpage

\appendix
\onecolumn

\section*{Appendix}

\section{Facts and Lemmas}

We first collect key definitions and present some fundamental results needed for the proofs in the remainder of the paper. The presentation and notation largely follows that of \citet{gruntkowska2025non}.

For any nonempty, closed, and convex set $\cX \subseteq \R^d$, we define its \emph{indicator function} $\delta_{\cX}: \R^d \to \R \cup \{+\infty\}$ by
\begin{align*}
    \delta_{\cX}(z) \eqdef
    \begin{cases}
        0 & \text{if } z\in \cX, \\
        +\infty & \text{otherwise}.
    \end{cases}
\end{align*}
This function is proper, closed, and convex.

\begin{definition}[Subdifferential]
    Let $f: \R^d \mapsto \R \cup \{+\infty\}$ be proper and let $x \in \dom(f)$. The \emph{subdifferential} of $f$ at $x$, denoted as $\partial f(x)$, is the set of vectors $g\in\R^d$ such that
    \begin{align*}
        f(y) \geq f(x) + \inp{g}{y-x} \qquad \forall y\in\R^d.
    \end{align*}
    The elements of $\partial f(x)$ are called the \emph{subgradients} of $f$ at $x$.
\end{definition}

\begin{fact}\label{fact:subdiff_id_ball}
    The subdifferential of the indicator function of a ball $\cB_{\mX}(x, t)$ is
    \begin{align*}
        \partial \delta_{\cB_{\mX}(x, t)}(y) = \cN_{\cB_{\mX}(x, t)}(y) = 
        \begin{cases}
            \R_{\geq0} \mX (y-x) & \norm{x-y}_{\mX} = t, \\
            \{0\} & \norm{x-y}_{\mX} < t, \\
            \emptyset & \norm{x-y}_{\mX} > t.
        \end{cases}
    \end{align*}
    where $\cN_{\cB_{\mX}(x, t)}(y)$ is the normal cone of $\cB_{\mX}(x, t)$ at $y$.
\end{fact}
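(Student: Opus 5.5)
The plan is to reduce everything to the variational characterization of subgradients of an indicator function. Fix $y$ and write $\cB \eqdef \cB_{\mX}(x,t)$. A vector $g$ is a subgradient of $\delta_{\cB}$ at $y$ only if $y \in \dom \delta_{\cB} = \cB$. For such $y$, the defining inequality $\delta_{\cB}(z) \geq \delta_{\cB}(y) + \inp{g}{z-y}$ is automatic for $z \notin \cB$, where the left side is $+\infty$. For $z\in\cB$ it reads $\inp{g}{z-y}\leq 0$. Hence
\begin{align*}
    \partial\delta_{\cB}(y)=\brac{g : \inp{g}{z-y}\leq 0 \ \ \forall z\in\cB} = \cN_{\cB}(y)
\end{align*}
for $y \in \cB$. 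For $\norm{x-y}_{\mX}>t$ the point lies outside the domain, and by the standard convention the subdifferential there is $\emptyset$. This settles the third case.

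\textbf{Interior case.} If $\norm{x-y}_{\mX}<t$, then for any direction $v$ the points $y\pm\varepsilon v$ lie in $\cB$ for small $\varepsilon>0$. Plugging them in gives $\pm\varepsilon\inp{g}{v}\leq 0$, so $\inp{g}{v}=0$ for every $v$. Thus $g=0$. Conversely, $g=0$ clearly satisfies the inequality, so $\partial\delta_{\cB}(y)=\{0\}$.

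\textbf{Boundary case, $\norm{x-y}_{\mX}=t$.} I will prove both inclusions.

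For sufficiency, take $g=\lambda\mX(y-x)$ with $\lambda\geq0$ and $z\in\cB$. Then
\begin{align*}
    \inp{g}{z-y}=\lambda\parens{\inp{y-x}{z-x}_{\mX}-\norm{y-x}_{\mX}^2}\leq\lambda(t\cdot t-t^2)=0,
\end{align*}
by Cauchy--Schwarz for the inner product $\inp{\cdot}{\cdot}_{\mX}$.

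For necessity, let $g\in\cN_{\cB}(y)$ with $g\neq 0$; the case $g=0$ corresponds to $\lambda=0$. Then $y$ maximizes $z\mapsto\inp{g}{z}$ over $\cB$. Substitute $w=\mX^{1/2}(z-x)$, so that $\cB$ becomes the Euclidean ball $\norm{w}\leq t$ and the objective becomes $\inp{\mX^{-1/2}g}{w}$ plus a constant. By the equality case of Cauchy--Schwarz, this linear functional has the unique maximizer
\begin{align*}
    w^\star=t\,\mX^{-1/2}g/\norm{\mX^{-1/2}g}.
\end{align*}
Hence $\mX^{1/2}(y-x)=w^\star$. Rearranging gives $g=\lambda\mX(y-x)$ with $\lambda=\norm{\mX^{-1/2}g}/t>0$.

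The only delicate point is this necessity direction. The argument depends on the change of variables through $\mX^{1/2}$, which exists since $\mX$ is symmetric positive definite, and on the uniqueness of the maximizer in the equality case of Cauchy--Schwarz. Once both are in place, the three cases combine to give the stated formula.
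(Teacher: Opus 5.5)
The paper records this as a standard fact and gives no proof, so there is no argument to compare against. Your verification is correct and complete.

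The individual steps all check out:
\begin{itemize}
\item the reduction $\partial\delta_{\cB}(y)=\cN_{\cB}(y)$ for $y\in\cB=\cB_{\mX}(x,t)$;
\item the interior case via the points $y\pm\varepsilon v$;
\item the Cauchy--Schwarz bound for sufficiency;
\item the necessity step, where the substitution $w=\mX^{1/2}(z-x)$ and the unique maximizer of a nonzero linear functional over the Euclidean ball give $g=\lambda\mX(y-x)$ with $\lambda=\norm{\mX^{-1/2}g}/t>0$.
\end{itemize}
If you want to avoid $\mX^{1/2}$, write $g=\mX v$ and apply the equality case of Cauchy--Schwarz for $\inp{\cdot}{\cdot}_{\mX}$ directly; this is purely cosmetic.

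One small point concerns the case $\norm{x-y}_{\mX}>t$. There you only justify $\partial\delta_{\cB}(y)=\emptyset$. The statement also claims $\cN_{\cB}(y)=\emptyset$ at such points, and that part is a convention rather than a consequence. With the definition in the paper's notation table, $\cN_{\cB}(y)=\{g:\inp{g}{z-y}\le 0\ \forall z\in\cB\}$ always contains $0$. You should therefore state explicitly the usual convention that the normal cone of a set $C$ at a point $y\notin C$ is empty.
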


\begin{lemma}\label{lemma:nonconv_diff_cone}
    Let $f: \R^d \mapsto \R$ be differentiable. Fix $x \in \R^d$ and let $u\in\argmin_{z\in \cB_{\mX}(x, t)} f(z)$. Then $- \nabla f(u) \in \cN_{\cB_{\mX}(x, t)}(u)$.
\end{lemma}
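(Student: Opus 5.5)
This is the standard first-order optimality condition for minimizing a differentiable function over a closed convex set. The plan is to prove it directly through the variational inequality and then read off the normal cone from \Cref{fact:subdiff_id_ball}.

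\textbf{Step 1 (variational inequality).} Since $\cB_{\mX}(x,t)$ is convex, for any $z\in\cB_{\mX}(x,t)$ and $\lambda\in(0,1]$ the point $u+\lambda(z-u)$ lies in the ball. Minimality of $u$ therefore gives $f(u+\lambda(z-u))-f(u)\ge 0$. Dividing by $\lambda$ and letting $\lambda\downarrow 0$, differentiability of $f$ yields
\begin{align*}
    \inp{\nabla f(u)}{z-u}\ge 0 \qquad \forall z\in\cB_{\mX}(x,t).
\end{align*}
Equivalently, $\inp{-\nabla f(u)}{z-u}\le 0$ for all $z$ in the ball. This is exactly the statement that $-\nabla f(u)$ belongs to the normal cone of a convex set at $u$, i.e.\ it is a subgradient of $\delta_{\cB_{\mX}(x,t)}$ at $u$. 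So $-\nabla f(u)\in\cN_{\cB_{\mX}(x,t)}(u)$.

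\textbf{Step 2 (explicit form, as a consistency check).} One can also verify the conclusion against the formula in \Cref{fact:subdiff_id_ball}, splitting by the location of $u$.
\begin{itemize}
    \item If $\norm{u-x}_{\mX}<t$, the directions $z-u$ range over a neighbourhood of $0$. The inequality of Step 1 then forces $\nabla f(u)=0$, matching $\{0\}$.
    \item If $\norm{u-x}_{\mX}=t$, write $g=-\nabla f(u)$ and decompose it in the $\mX$-geometry as $g=\alpha\mX(u-x)+w$ with $\inp{w}{u-x}=0$.
\end{itemize}
In the boundary case, tangent perturbations $z=u+\epsilon v$ with $\inp{\mX(u-x)}{v}=0$ only stay in the ball to second order, so one takes $z=x+t\,\frac{u-x+\epsilon v}{\norm{u-x+\epsilon v}_{\mX}}$. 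Letting $\epsilon\to0$ then gives $w=0$. Choosing $z=x$ gives $\alpha\ge0$.

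\textbf{Main obstacle.} Step 2's boundary analysis is the only fiddly part. Step 1 avoids it entirely, since the definition of the normal cone as the subdifferential of the indicator function, $\inp{g}{z-u}\le0$ for all $z$ in the set, is exactly what Step 1 establishes. I would therefore present Step 1 as the proof and cite \Cref{fact:subdiff_id_ball} only for the explicit form.
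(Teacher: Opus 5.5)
Your proof is correct and follows the same route as the paper. The paper gets $-\nabla f(u)\in\partial\delta_{\cB_{\mX}(x,t)}(u)=\cN_{\cB_{\mX}(x,t)}(u)$ by pointing to the argument in the proof of Lemma~3 of Kovalev (2025) and then invoking \Cref{fact:subdiff_id_ball}; your Step~1 simply writes out that first-order optimality (directional-derivative) argument using convexity of the ball, and your Step~2 is a sound but unnecessary rederivation of the explicit form in \Cref{fact:subdiff_id_ball}, including the oblique decomposition $g=\alpha\mX(u-x)+w$ with $\inp{w}{u-x}=0$.
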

\begin{proof}
    By the argument in the proof of Lemma~3 in \citet{kovalev2025understanding}, we have $- \nabla f(u) \in \partial\delta_{\cB_{\mX}(x, t)}(u)$. The result then follows directly from \Cref{fact:subdiff_id_ball}.
\end{proof}

The following is an analogue of the First Brox Theorem \citep[Theorem D.1]{gruntkowska2025ball}. Note that unlike in the convex case considered by \citet{gruntkowska2025ball}, when convexity assumption is dropped, $\BProxSub{t}{f}{x}$ need not be a singleton lying on the boundary of $\cB_{\mX}(x, t)$ when $\cB_{\mX}(x, t) \cap \cX_f = \emptyset$ (see \Cref{fig:nonconv}).
\begin{theorem}\label{thm:1stbrox_nc}
    Let $f: \R^d \mapsto \R \cup \{+\infty\}$ be proper and closed and choose $x \in \dom f$. Then $\BProxSub{t}{f}{x} \neq \emptyset$. Moreover, if $\cB_{\mX}(x, t) \cap \cX_f \neq \emptyset$, then $\BProxSub{t}{f}{x}$ is a nonempty subset of $\cX_f$.
\end{theorem}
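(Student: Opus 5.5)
The first claim is an existence statement, and I would prove it with the Weierstrass-type theorem for lower semicontinuous functions on compact sets. Write $\phi \eqdef f + \delta_{\cB_{\mX}(x,t)}$. Since $f$ is closed (lower semicontinuous) and $\delta_{\cB_{\mX}(x,t)}$ is closed, their sum $\phi$ is closed. The ball $\cB_{\mX}(x,t)$ is compact because $\mX$ is positive definite, so $\norm{\cdot}_{\mX}$ is equivalent to the Euclidean norm. Moreover $\phi$ is proper, since $x \in \dom f \cap \cB_{\mX}(x,t)$ gives $\phi(x) = f(x) < +\infty$, and $\phi > -\infty$ everywhere because $f$ is proper.

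Now take a minimizing sequence $z_n \in \cB_{\mX}(x,t)$ with $f(z_n) \to \inf_{\cB_{\mX}(x,t)} f$. By compactness we may extract a subsequence converging to some $\bar z \in \cB_{\mX}(x,t)$. Lower semicontinuity then gives $f(\bar z) \le \liminf_n f(z_n) = \inf_{\cB_{\mX}(x,t)} f$. Because $f$ never takes the value $-\infty$, this also shows the infimum is finite, so $\bar z$ attains it. Therefore $\bar z \in \BProxSub{t}{f}{x}$, and the operator is nonempty.

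For the second claim, suppose some $x_\star \in \cX_f \cap \cB_{\mX}(x,t)$ exists. Then $\min_{z \in \cB_{\mX}(x,t)} f(z) \le f(x_\star) = f_\star$. On the other hand, $f(z) \ge f_\star$ for every $z$ because $f_\star$ is the global minimum. Hence the minimum over the ball equals $f_\star$, and every $u \in \BProxSub{t}{f}{x}$ satisfies $f(u) = f_\star$, i.e.\ $u \in \cX_f$. Combined with the nonemptiness already shown, $\BProxSub{t}{f}{x}$ is a nonempty subset of $\cX_f$.

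I do not expect a substantive obstacle here. The only points needing care are that closedness means lower semicontinuity in the extended-real sense, and that properness excludes the value $-\infty$, so the attained minimum is a genuine real number.
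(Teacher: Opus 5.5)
Your proposal is correct and takes the same route as the paper. Existence follows from the Weierstrass theorem for a closed (lower semicontinuous) proper function on the compact ball $\cB_{\mX}(x,t)$, and the second claim follows because the minimum over the ball equals $f_\star$ once the ball meets $\cX_f$. You additionally supply details the paper leaves implicit: compactness of the $\mX$-ball via norm equivalence, finiteness of the infimum from properness, and $x \in \dom f$ keeping the minimal value below $+\infty$.
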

\begin{proof}
    The broximal operator is minimizing a proper, closed function over a closed set $\cB_{\mX}(x, t)$. Since such functions attain their minimum over compact sets, it follows that $\BProxSub{t}{f}{x} \neq \emptyset$. Moreover, if $\cB_{\mX}(x, t) \cap \cX_f \neq \emptyset$, then $\BProxSub{t}{f}{x} \subseteq \cX_f$ is nonempty.
\end{proof}

The next lemma shows that under \Cref{as:main1}, \algnamesmall{BPM} cannot get stuck away from the global minimum and keep ``jumping'' between two points.
\begin{lemma}\label{lemma:2ptdecr}
    Let \Cref{as:main1} hold and suppose that $u_1 \in \BProxSub{t}{f}{u_2}$ and $u_2 \in \BProxSub{t}{f}{u_1}$. If $u_1, u_2 \not\in \cX_f$, then $u_1=u_2$.
\end{lemma}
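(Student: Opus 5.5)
The plan is to apply \Cref{as:main1} twice, once with $(x,u)=(u_2,u_1)$ and once with $(x,u)=(u_1,u_2)$, and add the two resulting inequalities. Before doing that, I must check that both points lie in the region where \Cref{as:main1} applies, namely $\dom f \setminus \brac{\cX_f + \cB_{\mX}(0,t)}$.

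\emph{Domain.} Since $u_2 \in \cB_{\mX}(u_2,t)$ and $u_1 \in \BProxSub{t}{f}{u_2}$, we get $f(u_1)\le f(u_2)$. Symmetrically, $f(u_2)\le f(u_1)$, so $f(u_1)=f(u_2)$. The broximal operator is only considered at points of $\dom f$, so $u_1, u_2 \in \dom f$.

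\emph{Distance from $\cX_f$.} Suppose $\cB_{\mX}(u_2,t)\cap\cX_f\neq\emptyset$. Then \Cref{thm:1stbrox_nc} gives $\BProxSub{t}{f}{u_2}\subseteq\cX_f$, hence $u_1\in\cX_f$, which contradicts the hypothesis. Therefore $u_2\notin \cX_f+\cB_{\mX}(0,t)$, and by the same argument $u_1\notin \cX_f+\cB_{\mX}(0,t)$. This is the only mildly delicate step: the hypothesis $u_1,u_2\notin\cX_f$ is used exactly to bring both points into the range of \Cref{as:main1}.

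\emph{Main step.} Let $x_\star$ be the minimizer provided by \Cref{as:main1}. It is a single point that serves for every admissible $x$, and this uniformity is essential. Applying the assumption twice yields
\begin{align*}
    \inp{u_2-u_1}{x_\star-u_1}_{\mX}\le 0, \qquad \inp{u_1-u_2}{x_\star-u_2}_{\mX}\le 0.
\end{align*}
Adding these, the $x_\star$ terms cancel:
\begin{align*}
    \inp{u_2-u_1}{(x_\star-u_1)-(x_\star-u_2)}_{\mX}=\norm{u_2-u_1}_{\mX}^2\le 0 .
\end{align*}
Since $\mX$ is positive definite, this forces $u_1=u_2$.
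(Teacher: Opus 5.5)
Your proof is correct and matches the paper's argument: you use $u_1,u_2\notin\cX_f$ (via \Cref{thm:1stbrox_nc}) to place both points outside $\cX_f+\cB_{\mX}(0,t)$, apply \Cref{as:main1} in both directions with the same $x_\star$, and add the two inequalities to get $\norm{u_1-u_2}_{\mX}^2\le 0$. Your explicit domain check is extra care that the paper leaves implicit.
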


\begin{proof}
    Suppose that $u_1 \in \BProxSub{t}{f}{u_2}$, $u_2 \in \BProxSub{t}{f}{u_1}$ and $u_1, u_2 \not\in \cX_f$. Since $u_1 \in \argmin_{z \in \cB_{\mX}(u_2, t)} f(z)$ and $u_1 \notin \cX_f$, we have $\dist{u_2}{\cX_f} > t$, and likewise $\dist{u_1}{\cX_f} > t$. Therefore, $u_1, u_2 \notin \cX_f + \cB_{\mX}(0,t)$. Then, by \Cref{as:main1}, we have
    \begin{align*}
        \inp{u_2-u_1}{u_1-x_{\star}}_{\mX} \geq 0, \\
        \inp{u_1-u_2}{u_2-x_{\star}}_{\mX} \geq 0.
    \end{align*}
    Adding the two inequalities gives
    \begin{align*}
        0 \leq \inp{u_1-u_2}{u_2 - x_{\star} - u_1 + x_{\star}}_{\mX} = - \norm{u_1-u_2}_{\mX}^2,
    \end{align*}
    and hence $u_1=u_2$.
\end{proof}

The result below is a direct consequence of \Cref{as:main2}.

\begin{lemma}\label{lemma:iters_neq}
    Let \Cref{as:main2} hold and let $\{x_k\}_{k\geq0}$ be the iterates of \algnamesmall{BPM}. If $x_k \not\in \cX_f$, then $x_k \neq x_{k+1}$.
\end{lemma}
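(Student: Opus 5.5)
We argue by contrapositive: we assume $x_{k+1} = x_k$ and conclude $x_k \in \cX_f$. The only inputs are the algorithmic rule, the selection convention stated after \Cref{as:main1}, and \Cref{as:main2}.

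Step 1 uses the update rule. By \eqref{eq:bpm}, $x_{k+1} \in \BProxSub{t}{f}{x_k}$, which is nonempty by \Cref{thm:1stbrox_nc}. Hence if $x_{k+1} = x_k$, then $x_k \in \BProxSub{t}{f}{x_k}$.

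Step 2 splits on whether this set is a singleton. If $\BProxSub{t}{f}{x_k} = \{x_k\}$, then \Cref{as:main2}, applied at the point $x_k \in \dom f$, gives $x_k \in \cX_f$. This contradicts the hypothesis $x_k \notin \cX_f$.

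If instead $\BProxSub{t}{f}{x_k}$ contains some element other than $x_k$, we invoke the selection convention adopted after \Cref{as:main1}. That convention says that when $x \in \BProxSub{t}{f}{x}$ and the set has further elements, the algorithm picks a point of $\BProxSub{t}{f}{x}\setminus\{x\}$; equivalently, it picks a broximal minimizer maximally distant from $x$, and such a point is not $x$ itself. So in this case $x_{k+1} \neq x_k$, again a contradiction. Note that $x_k \in \dom f$ for every $k$, since $f(x_{k+1}) \le f(x_k)$ and $x_0 \in \dom f$.

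The only delicate point is that the lemma depends on this tie-breaking rule. Without it, the set-valued operator would allow the algorithm to stay at $x_k$ even though a different minimizer exists. I would therefore state explicitly that the iterates are generated under that convention; once it is assumed, the argument is the short case split above.
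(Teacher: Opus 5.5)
Your proof is correct and follows the paper's argument: \Cref{as:main2} rules out $\BProxSub{t}{f}{x_k} = \{x_k\}$, and the tie-breaking convention stated after \Cref{as:main1} moves the algorithm away from $x_k$ whenever other broximal minimizers exist. Your contrapositive framing, which first deduces $x_k \in \BProxSub{t}{f}{x_k}$ from $x_{k+1}=x_k$, is slightly tidier than the paper's one-line version: the paper claims the set has cardinality greater than $1$ without noting that it could instead be a singleton $\{u\}$ with $u \neq x_k$, where the conclusion is immediate anyway.
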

\begin{proof}
    Since $x_k \not\in \cX_f$, \Cref{as:main2} says that $\BProxSub{t}{f}{x_k} \neq \{x_k\}$, and hence the set $\BProxSub{t}{f}{x_k}$ has cardinality greater than $1$. By definition of the algorithm, it follows that $x_k \neq x_{k+1}$.
\end{proof}

\begin{remark}
    Note that by the reasoning above, under Assumptions \ref{as:main1} and \ref{as:main2}, we know that the algorithm has already reached $\cX_f$ when the function value does not decrease for two consecutive iterations.
\end{remark}

\begin{lemma}\label{lemma:t_dist}
    Let Assumptions \ref{as:main1} and \ref{as:main2} hold. Then, for any iteration $k$ such that $x_{k+1}\not\in\cX_f$, the iterates of \algnamesmall{BPM} satisfy either
    \begin{itemize}
        \item $\norm{x_k - x_{k+2}}_{\mX} > t$, or 
        \item $\norm{x_k - x_{k+3}}_{\mX} > t$ and $\norm{x_{k+1} - x_{k+3}}_{\mX} > t$.
    \end{itemize}
    Therefore, $\norm{x_k - x_{k+3}}_{\mX} > t$ for any such $k$.
\end{lemma}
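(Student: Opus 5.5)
The plan is to compare function values: every point of $\cB_{\mX}(x_j,t)$ has value at least $f(x_{j+1})$, so any later iterate with strictly smaller value must lie outside that ball. The argument uses monotonicity of $f$ along the iterates, \Cref{lemma:iters_neq} to force movement, and \Cref{lemma:2ptdecr} to rule out two consecutive iterations without strict decrease away from $\cX_f$.

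\textbf{Preliminaries.} Since $x_j\in\cB_{\mX}(x_j,t)$, we have $f(x_{j+1})\le f(x_j)$ for all $j$. Fix $k$ with $x_{k+1}\notin\cX_f$.
\begin{itemize}
\item By \Cref{thm:1stbrox_nc}, $\cB_{\mX}(x_k,t)\cap\cX_f=\emptyset$, since otherwise $x_{k+1}$ would be optimal. In particular $x_k\notin\cX_f$.
\item Because $x_{k+1}$ minimizes $f$ over $\cB_{\mX}(x_k,t)$, every $z$ with $\norm{z-x_k}_{\mX}\le t$ satisfies $f(z)\ge f(x_{k+1})$.
\item Consequently, any later iterate with value strictly below $f(x_{k+1})$ lies at $\mX$-distance more than $t$ from $x_k$. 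The same reasoning applies to $x_{k+1}$ with threshold $f(x_{k+2})$.
\end{itemize}

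\textbf{Case A: $f(x_{k+2})<f(x_{k+1})$.} By the observation above, $\norm{x_k-x_{k+2}}_{\mX}>t$, which is the first alternative.

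\textbf{Case B: $f(x_{k+2})=f(x_{k+1})$.} This is the only other case, by monotonicity. I would proceed as follows.
\begin{itemize}
\item $x_{k+2}\notin\cX_f$, since otherwise $f(x_{k+1})=f_\star$, contradicting $x_{k+1}\notin\cX_f$.
\item $x_{k+2}\neq x_{k+1}$ by \Cref{lemma:iters_neq}.
\item I claim $f(x_{k+3})<f(x_{k+2})$. Suppose instead $\min_{\cB_{\mX}(x_{k+2},t)}f=f(x_{k+2})$. Then $x_{k+1}$, which lies in $\cB_{\mX}(x_{k+2},t)$ by symmetry of the norm and has value $f(x_{k+1})=f(x_{k+2})$, belongs to $\BProxSub{t}{f}{x_{k+2}}$. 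Together with $x_{k+2}\in\BProxSub{t}{f}{x_{k+1}}$, and since both points are non-optimal, \Cref{lemma:2ptdecr} forces $x_{k+1}=x_{k+2}$, a contradiction.
\item Hence $f(x_{k+3})<f(x_{k+2})=f(x_{k+1})$.
\item Since $f(x_{k+3})<f(x_{k+1})$, $x_{k+3}$ lies outside $\cB_{\mX}(x_k,t)$. Since $f(x_{k+3})<f(x_{k+2})$ and $x_{k+2}$ minimizes $f$ over $\cB_{\mX}(x_{k+1},t)$, $x_{k+3}$ also lies outside $\cB_{\mX}(x_{k+1},t)$. This gives the second alternative.
\end{itemize}

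\textbf{Final claim.} In Case B, $\norm{x_k-x_{k+3}}_{\mX}>t$ is already part of the second alternative. In Case A, $f(x_{k+3})\le f(x_{k+2})<f(x_{k+1})$, so the same ball argument gives $\norm{x_k-x_{k+3}}_{\mX}>t$.

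\textbf{Main obstacle.} The delicate step is the use of \Cref{lemma:2ptdecr} in Case B. One must check that $x_{k+1}$ genuinely lies in $\BProxSub{t}{f}{x_{k+2}}$, which needs both the symmetric ball membership and the equal function values. One must also confirm that both points are outside $\cX_f$, so that the lemma's hypotheses hold.
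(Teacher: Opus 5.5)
Your proof is correct. It uses the same two ingredients as the paper's: minimality of $x_{j+1}$ over $\cB_{\mX}(x_j,t)$, so any later iterate with strictly smaller value lies outside that ball, and \Cref{lemma:2ptdecr} applied to a non-optimal plateau to force a strict decrease at the next step.

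The difference is the case split. The paper splits on $f(x_k)$ versus $f(x_{k+1})$. In its Case~1 it invokes \Cref{lemma:2ptdecr} on $(x_k,x_{k+1})$, but only to deduce $f(x_{k+2})<f(x_{k+1})$. In its Case~2 it sub-cases on $\norm{x_k-x_{k+2}}_{\mX}\le t$, which forces the plateau $f(x_{k+1})=f(x_{k+2})$, and then repeats the plateau argument. You split directly on $f(x_{k+1})$ versus $f(x_{k+2})$, so each branch yields one alternative of the statement immediately. The paper's Case~1 falls inside your Case~A, where the ball argument alone suffices without \Cref{lemma:2ptdecr}. The paper's critical sub-case $\norm{x_k-x_{k+2}}_{\mX}\le t$ falls inside your Case~B, and the lemma is needed only once.

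The paper's organization mirrors the informal narrative of Section~3, where a non-decreasing step must be followed by a strict decrease. Yours is more economical. It also states explicitly two points the paper leaves implicit:
\begin{itemize}
\item $x_{k+2}\notin\cX_f$, which is required to apply \Cref{lemma:2ptdecr} to $(x_{k+1},x_{k+2})$.
\item The conclusion is that $x_{k+3}$ is excluded from the balls $\cB_{\mX}(x_k,t)$ and $\cB_{\mX}(x_{k+1},t)$. The paper's final sentence writes this as exclusion from $\BProxSub{t}{f}{x_k}$ and $\BProxSub{t}{f}{x_{k+1}}$, which is weaker than what is needed.
\end{itemize}
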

\begin{proof}
    Since $x_{k+1}\not\in\cX_f$, we have $x_k \not\in \cX_f$, so from \Cref{lemma:iters_neq} we know that $x_k \neq x_{k+1}$ and $x_{k+1} \neq x_{k+2}$. By design, for any $j$ we have $f(x_j) \geq f(x_{j+1})$. Now, let us consider two cases:

    \textbf{Case 1: $f(x_k) = f(x_{k+1})$.} By \Cref{lemma:2ptdecr}, $x_k \not\in \BProxSub{t}{f}{x_{k+1}}$. Since $\BProxSub{t}{f}{x_{k+1}}$ is non-empty (\Cref{thm:1stbrox_nc}), there exists $u \in \BProxSub{t}{f}{x_{k+1}}$ such that $f(u)<f(x_{k+1})$ (the inequality is strict because otherwise we would have $x_k \in \BProxSub{t}{f}{x_{k+1}}$). Consequently, $u \not\in B(x_k,t)$ for any $u \in \BProxSub{t}{f}{x_{k+1}}$, meaning that $\norm{x_k-x_{k+2}}_{\mX} > t$. Since $f(x_{k+3}) \leq f(x_{k+2}) < f(x_k)$, clearly $\norm{x_k - x_{k+3}}_{\mX} > t$.
    
    \textbf{Case 2: $f(x_k) > f(x_{k+1})$.} If $\norm{x_k-x_{k+2}}_{\mX} > t$, then the claim holds. Now, suppose that $\norm{x_k-x_{k+2}}_{\mX} \leq t$. Then $x_{k+1}, x_{k+2} \in \BProxSub{t}{f}{x_k}$, which forces $f(x_{k+1}) = f(x_{k+2})$. As in Case~1, a plateau in function values must be followed by a strict decrease, so $f(x_{k+3}) < f(x_{k+2})$. This in turn means that $x_{k+3} \not\in \BProxSub{t}{f}{x_k}$ and $x_{k+3} \not\in \BProxSub{t}{f}{x_{k+1}}$, which completes the proof.
\end{proof}

\newpage

\section{Proof of \Cref{thm:nc_bpm}}

\MAINTHM*

\begin{proof}
    \begin{enumerate}[label=(\roman*)]
        \item That $x_{k+1} \in \cX_f$ if $\cX_f\cap \cB_{\mX}(x_k, t)\neq\emptyset$ is obvious from the definition of broximal operator. Similarly, if $\cX_f\cap \cB_{\mX}(x_k, t)=\emptyset$, then $x_{k+1}$ cannot belong to $\cX_f$.
        
        \item We start with the simple decomposition
        \begin{eqnarray*}
            \norm{x_{k+1} - x_{\star}}_{\mX}^2 = \norm{x_k-x_{\star}}_{\mX}^2 - 2 \inp{x_k - x_{k+1}}{x_{k+1} - x_{\star}}_{\mX} - \norm{x_{k+1} - x_k}_{\mX}^2.
        \end{eqnarray*}
        Since $x_k \not\in \cX_f + \cB_{\mX}(0,t)$, applying \Cref{as:main1}, we get
        \begin{eqnarray}\label{eq:nvwoiaq}
            \norm{x_{k+1} - x_{\star}}_{\mX}^2 &\leq& \norm{x_k-x_{\star}}_{\mX}^2 - \norm{x_{k+1} - x_k}_{\mX}^2 \\
            &<& \norm{x_k-x_{\star}}_{\mX}^2, \nonumber
        \end{eqnarray}
        so the distances $\brac{\norm{x_k-x_{\star}}_{\mX}}_{k\geq0}$ are decreasing.
        Now, from \Cref{lemma:t_dist} and triangle inequality, it follows that
        \begin{eqnarray*}
            t < \norm{x_{k-2} - x_{k+1}}_{\mX} \leq \norm{x_{k-2} - x_{k-1}}_{\mX} + \norm{x_{k-1} - x_k}_{\mX} + \norm{x_k - x_{k+1}}_{\mX},
        \end{eqnarray*}
        and the AM-QM inequality gives
        \begin{eqnarray*}
            &&\hspace{-1.5cm}\sqrt{\frac{\norm{x_{k-2} - x_{k-1}}^2_{\mX} + \norm{x_{k-1} - x_k}^2_{\mX} + \norm{x_k - x_{k+1}}^2_{\mX}}{3}} \\
            &\geq& \frac{\norm{x_{k-2} - x_{k-1}}_{\mX} + \norm{x_{k-1} - x_k}_{\mX} + \norm{x_k - x_{k+1}}_{\mX}}{3}
            > \frac{t}{3}.
        \end{eqnarray*}
        This in turn implies that $\norm{x_{k-2} - x_{k-1}}^2_{\mX} + \norm{x_{k-1} - x_k}^2_{\mX} + \norm{x_k - x_{k+1}}^2_{\mX} > \frac{t^2}{3}$.
        Hence, applying \eqref{eq:nvwoiaq}, we get
        \begin{eqnarray}\label{eq:rahgfdav}
            \norm{x_{k+1} - x_{\star}}_{\mX}^2 &\leq& \norm{x_{k-1}-x_{\star}}_{\mX}^2 - \parens{\norm{x_k - x_{k-1}}_{\mX}^2 + \norm{x_{k+1} - x_k}_{\mX}^2} \nonumber \\
            &\leq& \norm{x_{k-2}-x_{\star}}_{\mX}^2 - \parens{\norm{x_{k-1} - x_{k-2}}_{\mX}^2 + \norm{x_k - x_{k-1}}_{\mX}^2 + \norm{x_{k+1} - x_k}_{\mX}^2} \nonumber \\
            &<& \norm{x_{k-2}-x_{\star}}_{\mX}^2 - \frac{t^2}{3}.
        \end{eqnarray}
        
        \item Applying \eqref{eq:rahgfdav} iteratively, we get
        \begin{eqnarray*}
            \norm{x_K - x_{\star}}_{\mX}^2 &\leq& \norm{x_{K-3}-x_{\star}}_{\mX}^2 - \frac{t^2}{3} \\
            &\leq& \norm{x_{(K \textnormal{mod} \, 3)}-x_{\star}}_{\mX}^2 - \flr{\frac{K}{3}} \frac{t^2}{3} \\
            &\overset{\eqref{eq:nvwoiaq}}{<}& \norm{x_0-x_{\star}}_{\mX}^2 - \flr{\frac{K}{3}} \frac{t^2}{3},
        \end{eqnarray*}
        and the conclusion follows.
    \end{enumerate}
\end{proof}

\newpage

\section{Proofs for \Cref{sec:examples}}

In this section, we prove the results from \Cref{sec:examples} identifying the classes of potentially non-convex functions from the optimization literature under which Assumptions \ref{as:main1} and \ref{as:main2} hold. We begin by elaborating on the examples from \Cref{sec:examples} (\Cref{sec:ap_examples}), then move on to presenting transformations of the loss under which our assumptions are preserved (\Cref{sec:transform}), and finish off with providing proofs of the implications presented in \Cref{sec:examples}, connecting Assumptions \ref{as:main1} and \ref{as:main2} to existing literature (Sections \ref{sec:quasiconv}, \ref{sec:quasarconv} and \ref{sec:aiming}).

Unless otherwise stated, we specialize to $\mX = \mI$; the exception is \Cref{sec:transform}, where the general $\mX$ setting is used.

\subsection{Examples}\label{sec:ap_examples}

We first provide the proofs for the results in \Cref{sec:toy_examples}. 

\EXAMPLE*

Indeed, fix $t>0$ and take any $x \in \dom f \backslash \brac{\cX_f + \cB(0,t)}$. Then $f(z) = \norm{z}^2$ for all $z\in \cB(x, t)$, and the broximal operator in this case will return $u = x - t\frac{x}{\norm{x}}$. Therefore, choosing $x_\star = 0$,
\begin{align*}
    \inp{x - u}{u - x_\star} = \inp{t \frac{x}{\norm{x}}}{x - t\frac{x}{\norm{x}}} = t(\norm{x} - t) > 0,
\end{align*}
so \Cref{as:main1} holds.

Lastly, if $\BProxSub{t}{f}{x} = \brac{x}$, then $x$ must be a local minimizer of $f$ within the ball $\cB(x, t)$.
The only points that could satisfy this are the global minimizers $0$ and $a$, and hence 
\begin{align*}
 \BProxSub{t}{f}{x} = \brac{x} \implies x \in \brac{0, a} = \cX_f. 
\end{align*}

The example above motivates the construction of a larger class of functions that satisfy Assumptions \ref{as:main1} and \ref{as:main2}.

\begin{lemma}\label{lemma:constructing}
    Let $g: \R^d \mapsto \R \cup \brac{+\infty}$ be a proper, closed function with a nonempty set of minimizers $\cX_g$. Suppose further that $g$ satisfies Assumptions~\ref{as:main1} and~\ref{as:main2} with some radius $t > 0$. Let $\cC \subseteq \dom g$ be any closed set such that
    \begin{align*}
        \cC\cap \cX_g = \emptyset
    \end{align*}
    and define $f: \R^d \mapsto \R \cup \brac{+\infty}$ by 
    \begin{align*}
        f(x) =
        \begin{cases}
            g_{\inf} & x \in \cC, \\
            g(x) & \text{otherwise},
         \end{cases}
    \end{align*}
    where $g_{\inf}$ is the minimum of $g$. Then the following statements hold:
    \begin{enumerate}
        \item $f$ is proper and closed, and its set of global minimizers is $\cX_f = \cX_g \cup \cC$.
        \item $f$ satisfies Assumption~\ref{as:main1} with the same radius $t$ and the same choice of $x_\star$.
        \item $f$ satisfies Assumption~\ref{as:main2} with the same radius $t$.
    \end{enumerate}
\end{lemma}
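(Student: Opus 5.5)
The plan is to observe that $f=\min\{g,\ g_{\inf}+\delta_{\cC}\}$ and that $f$ coincides with $g$ on every ball that avoids $\cC$. Both assumptions then transfer from $g$ to $f$ by restricting attention to balls that do not meet $\cX_f$, since such balls automatically miss $\cC$.

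\textbf{Step 1 (properness, closedness, minimizers).} Since $\cC\subseteq\dom g$, we have $\dom f=\dom g\neq\emptyset$, so $f$ is proper. For closedness, the function $g_{\inf}+\delta_{\cC}$ is lower semicontinuous because $\cC$ is closed, and $g$ is closed by hypothesis. The pointwise minimum of two lsc functions is lsc, and this minimum equals $f$ because $g\ge g_{\inf}$ everywhere. For the minimizers, note that $f\ge g_{\inf}$ everywhere and $f=g_{\inf}$ on $\cC$. Off $\cC$ we have $f=g$, which equals $g_{\inf}$ exactly on $\cX_g$. Hence $f_\star=g_{\inf}$ and $\cX_f=\cX_g\cup\cC$.

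\textbf{Step 2 (\Cref{as:main1}).} Use the same $x_\star\in\cX_g\subseteq\cX_f$ as for $g$. Take any $x\in\dom f\setminus(\cX_f+\cB_{\mX}(0,t))$. Then $\cB_{\mX}(x,t)\cap\cX_f=\emptyset$, so the ball misses $\cC$, and therefore $f=g$ on $\cB_{\mX}(x,t)$. It follows that $\BProxSub{t}{f}{x}=\BProxSub{t}{g}{x}$. Moreover, since $\cX_g\subseteq\cX_f$, the point $x$ lies in $\dom g\setminus(\cX_g+\cB_{\mX}(0,t))$. \Cref{as:main1} for $g$ then gives \eqref{eq:as_main} for every $u\in\BProxSub{t}{f}{x}$.

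\textbf{Step 3 (\Cref{as:main2}).} Suppose $\BProxSub{t}{f}{x}=\{x\}$, and split into two cases.
\begin{itemize}
    \item If $\cB_{\mX}(x,t)\cap\cX_f\neq\emptyset$, then \Cref{thm:1stbrox_nc} gives $\BProxSub{t}{f}{x}\subseteq\cX_f$, so $x\in\cX_f$.
    \item Otherwise the ball misses $\cC$. As in Step 2, $\BProxSub{t}{g}{x}=\{x\}$, so \Cref{as:main2} for $g$ gives $x\in\cX_g\subseteq\cX_f$.
\end{itemize}

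The main obstacle is Step 1, specifically verifying closedness: this is the only place where closedness of $\cC$ enters. Steps 2 and 3 then rest on the single observation that a ball disjoint from $\cX_f$ is disjoint from $\cC$, so $f$ and $g$ agree on it.
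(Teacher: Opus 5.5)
Your proof is correct and follows the paper's approach: both rest on the observation that a ball disjoint from $\cX_f$ also misses $\cC$, so $f=g$ on it and the broximal operators coincide. The differences are cosmetic. You get lower semicontinuity from $f=\min\{g,\,g_{\inf}+\delta_{\cC}\}$ instead of the paper's two-case check. For Assumption~\ref{as:main2} you split on whether the ball meets $\cX_f$ (invoking \Cref{thm:1stbrox_nc}), whereas the paper splits on whether it meets $\cC$ and uses uniqueness of the minimizer.
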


\begin{proof}
    We prove each claim in turn.
    \begin{enumerate}
        \item Since $g$ is proper, we have $\dom g \neq \emptyset$ and $g > -\infty$ everywhere. By construction, $f(x) \geq g_{\inf} > -\infty$ for all $x$, and
        \begin{align*}
            \dom f = \dom g \neq \emptyset.
        \end{align*}
        Hence $f$ is proper. We next verify that $f$ is closed, i.e., lower semicontinuous.

        \textbf{Case 1: $x \notin \cC$.}
        Since $\cC$ is closed, there exists $\varepsilon > 0$ such that $\cB(x, \varepsilon) \cap \cC = \emptyset$.
        Therefore, for any sequence $\{x_k\}$ with $x_k \to x$, we have $x_k \notin \cC$ for all sufficiently large $k$, and hence $f(x_k) = g(x_k)$.
        Since $g$ is lower semicontinuous at $x$, it follows that $f$ is lower-semicontinuous at $x$ as well.

        \textbf{Case 2: $x \in \cC$.}
        Then $f(x) = g_{\inf}$. By construction, $f \geq g \geq g_{\inf}$ everywhere, so for any sequence $x_k \to x$,
        \begin{align*}
        \liminf_{k \to \infty} f(x_k) \geq g_{\inf} = f(x).
        \end{align*}
        
        Combining the two cases, $f$ is lower semicontinuous and hence closed.
        Lastly, $\cX_f = \cX_g \cup \cC$ by construction.
        
        \item Let $x \in \dom f$ satisfy $\dist{x}{\cX_f} > t$, and let $u \in \BProxSub{t}{f}{x}$.
        Then the ball $\cB(x, t)$ does not intersect $\cX_f$, and in particular does not intersect $\cC$.
        Consequently, $f = g$ on $\cB(x, t)$, which implies
        \begin{align*}
        \BProxSub{t}{f}{x} = \BProxSub{t}{g}{x}.
        \end{align*}
        
        Since $\cX_g \subseteq \cX_f$, we also have $\dist{x}{\cX_g} > t$, so Assumption~\ref{as:main1} for $g$ applies.
        Therefore, there exists $x_\star \in \cX_g$ such that
        \begin{align*}
        \inp{x - u}{u - x_\star} \geq 0.
        \end{align*}
        This establishes Assumption~\ref{as:main1} for $f$ with the same $x_\star$.
        
        \item Assume that $\BProxSub{t}{f}{x} = \{x\}$ for some $x \in \dom f$. If $x \in \cC$, then $x \in \cX_f$ and the claim holds. Now, suppose $x \notin \cC$. If $\cB(x, t) \cap \cC \neq \emptyset$, choose $z \in \cB(x, t) \cap \cC$. Then
        \begin{align*}
        f(z) = g_{\inf} \leq f(x).
        \end{align*}
        Since $x$ is the unique minimizer of $f$ over $\cB(x, t)$, this forces $z = x$, contradicting $x \notin \cC$. Hence $\cB(x, t) \cap \cC = \emptyset$. Therefore, $f = g$ on $\cB(x, t)$ and
        \begin{align*}
        \BProxSub{t}{f}{x} = \BProxSub{t}{g}{x} = \{x\}.
        \end{align*}
        By Assumption~\ref{as:main2} for $g$, we conclude that $x \in \cX_g \subseteq \cX_f$. Thus Assumption~\ref{as:main2} also holds for $f$.
    \end{enumerate}
\end{proof}

\subsection{Operations that Preserve Assumptions~\ref{as:main1} and~\ref{as:main2}}\label{sec:transform}

The next three lemmas apply in the general $\mX$ setting.

We first show that strictly increasing transformations preserve Assumptions~\ref{as:main1} and~\ref{as:main2}.

\MONOTRANS*

\begin{proof}
    We first note that $\dom f = \dom h$. Since $h$ is proper, $f$ is also proper.

    Now, we claim that $\cX_h = \cX_f$. Let $x_\star \in \cX_h$. Then for all $x \in \dom h = \dom f$, we have $h(x_\star) \leq h(x)$. Since $g$ is strictly increasing, this implies
    \begin{align*}
    f(x_\star) = g(h(x_\star)) \leq g(h(x)) = f(x),
    \end{align*}
    and hence $x_\star \in \cX_f$.
    Conversely, suppose $x_\star \in \cX_f$. Then for all $x \in \dom f$,
    \begin{align*}
    g(h(x_\star)) \leq g(h(x)).
    \end{align*}
    Strict monotonicity of $g$ implies $h(x_\star) \leq h(x)$, and thus $x_\star \in \cX_h$. Therefore, $\cX_f = \cX_h$.

    Now, fix any $x \in \dom h$ and define
    \begin{align*}
    h_x \eqdef h + \delta_{\cB_{\mX}(x, t)}, \qquad f_x \eqdef g \circ h_x = f + \delta_{\cB_{\mX}(x, t)}.
    \end{align*}
    Applying the argument above to $h_x$ and $f_x$ and using the definition of the broximal mapping yields
    \begin{align*}
    \BProxSub{t}{h}{x} = \cX_{h_x} = \cX_{f_x} = \BProxSub{t}{f}{x}.
    \end{align*}
    Since $\BProxSub{t}{f}{x} = \BProxSub{t}{h}{x}$ for all $x$, and since $\cX_f = \cX_h$, Assumption~\ref{as:main1} for $f$ follows immediately from Assumption~\ref{as:main1} for $h$ with the same choice of $x_\star$.

    Lastly, if $\BProxSub{t}{f}{x} = \{x\}$, then the equality above implies
    \begin{align*}
        \BProxSub{t}{h}{x} = \{x\}.
    \end{align*}
    By Assumption~\ref{as:main2} for $h$, we conclude that $x \in \cX_h = \cX_f$. Hence Assumption~\ref{as:main2} holds for $f$, so \Cref{as:main2} holds for~$f$ as well.
\end{proof}

We next show that Assumptions~\ref{as:main1} and~\ref{as:main2} are invariant under Euclidean isometries.

\EUCISO*

\begin{proof}
    Since $h$ is proper and $\phi$ is bijective, $f$ is proper.
    Moreover,
    \begin{align*}
        x \in \cX_f
        \quad\iff\quad
        h(\phi(x)) = \inf h
        \quad\iff\quad
        \phi(x) \in \cX_h,
    \end{align*}
    and hence $\cX_f = \phi^{-1}(\cX_h)\neq\emptyset$.
    
    Now, by the definition of $\mQ$, for any $x,y\in\R^d$,
    \begin{align*}
        \norm{\phi(y)-\phi(x)}_{\mX}^2
        = (y-x)^\top \mQ^\top \mX \mQ (y-x)
        = \norm{y-x}_{\mX}^2,
    \end{align*}
    so $y\in \cB_{\mX}(x, t)$ if and only if $\phi(y)\in \cB_{\mX}(\phi(x), t)$. Using the properties of the broximal operator, one can easily check that for any $x\in\dom f$
    \begin{align*}
    \BProxSub{t}{f}{x} = \phi^{-1}\parens{\BProxSub{t}{h}{\phi(x)}}.
    \end{align*}
    
    Next, we verify that \Cref{as:main1} holds. Let $x\in \dom f \setminus \brac{\cX_f + \cB_{\mX}(0,t)}$ and $u\in\BProxSub{t}{f}{x}$.
    Then $\phi(x)\in \dom h \setminus  \brac{\cX_h + \cB_{\mX}(0,t)}$ and $\phi(u)\in\BProxSub{t}{h}{\phi(x)}$.
    By Assumption~\ref{as:main1} for $h$, there exists $x_\star\in\cX_h$ such that
    \begin{align*}
        \inp{\phi(x)-\phi(u)}{\phi(u)-x_\star}_{\mX} \geq 0.
    \end{align*}
    Using $\mQ^\top \mX \mQ = \mX$, this is equivalent to
    \begin{align*}
        \inp{x-u}{u-\phi^{-1}(x_\star)}_{\mX} \geq 0.
    \end{align*}
    Since $\phi^{-1}(x_\star)\in\cX_f$, Assumption~\ref{as:main1} holds for $f$.
    
    Lastly, if $\BProxSub{t}{f}{x}=\{x\}$, then $\BProxSub{t}{h}{\phi(x)}=\{\phi(x)\}$. By Assumption~\ref{as:main2} for $h$, $\phi(x)\in\cX_h$, and hence $x\in\cX_f$.
\end{proof}

The following lemma is, in fact, a direct corollary of \Cref{lemma:transformation-11}. For completeness, we include a full proof.

\AFFFUNC*

\begin{proof}
    Since $g$ is proper, $f$ is also proper. Moreover, the set of minimizers is unchanged under positive scaling and additive shifts, so $\cX_f = \cX_g \neq \emptyset$.
    
    Note that for any $x \in \dom f$ and radius $t>0$, by definition of the broximal operator:
    \begin{align*}
        u \in \BProxSub{t}{f}{x} \quad &\iff \quad u \in \argmin_{z \in \cB_{\mX}(x,t)} f(z) = \argmin_{z \in \cB_{\mX}(x,t)} (a g(z) + b) \\
        &\iff \quad u \in \argmin_{z \in \cB_{\mX}(x,t)} g(z) \\
        &\iff \quad u \in \BProxSub{t}{g}{x}.
    \end{align*}
    Hence, we have $\BProxSub{t}{f}{x} = \BProxSub{t}{g}{x}$. Now, let $x \in \dom f \setminus \brac{\cX_f + \cB_{\mX}(0,t)} = \dom g \setminus \brac{\cX_g + \cB_{\mX}(0,t)}$ and $u \in \BProxSub{t}{f}{x} = \BProxSub{t}{g}{x}$. It is obvious that \Cref{as:main1} holds from $f$ whenever it holds for $g$.
    
    Lastly, if $\BProxSub{t}{f}{x} = \{x\}$, then
    \begin{align*}
    \BProxSub{t}{g}{x} = \BProxSub{t}{f}{x} = \{x\}.
    \end{align*}
    By Assumption~\ref{as:main2} for $g$, we have $x \in \cX_g = \cX_f$, so \Cref{as:main2} also holds for $f$ with radius $t$.
\end{proof}

\subsection{Quasiconvexity}\label{sec:quasiconv}

\QUASICONVDEF*

The proof of the main result in this section will use the characterization of quasiconvexity via sublevel sets.

\begin{lemma}\label{lemma:quasi_conv_lf_conv}
    Let $f$ be quasiconvex. Then, for any $x\in\R^d$, the sublevel sets $L_f^{\leq}(x) \eqdef \brac{u \in \R^d: f(u) \leq f(x)}$ and $L_f^{<}(x) \eqdef \brac{u \in \R^d: f(u) < f(x)}$ are convex.
\end{lemma}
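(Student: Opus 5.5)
The plan is to verify convexity directly from \Cref{def:quasiconvex}, treating the two sublevel sets in parallel. Fix $x\in\R^d$ and write $\alpha \eqdef f(x)$. Take any $u_1,u_2$ in the relevant sublevel set and any $\lambda\in[0,1]$, and set $u_\lambda \eqdef (1-\lambda)u_1+\lambda u_2$. Quasiconvexity gives
\begin{align*}
    f(u_\lambda) \leq \max\brac{f(u_1), f(u_2)}.
\end{align*}
This single inequality will handle both cases.

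\textbf{Non-strict sublevel set.} For $L_f^{\leq}(x)$ we have $f(u_1)\leq\alpha$ and $f(u_2)\leq\alpha$. Hence the maximum is at most $\alpha$, so $f(u_\lambda)\leq \alpha$ and $u_\lambda\in L_f^{\leq}(x)$.

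\textbf{Strict sublevel set.} For $L_f^{<}(x)$ we have $f(u_1)<\alpha$ and $f(u_2)<\alpha$. The maximum of two numbers is one of them, so it is strictly below $\alpha$. Therefore $f(u_\lambda)<\alpha$ and $u_\lambda\in L_f^{<}(x)$.

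\textbf{Degenerate cases.} Both sets may be empty; for instance $L_f^{<}(x)$ is empty when $x\in\cX_f$. Empty sets are trivially convex.

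The only thing to check is the extended-value setting $f:\R^d\to\R\cup\{+\infty\}$. If $\alpha=+\infty$, the set $L_f^{\leq}(x)$ is all of $\R^d$ and $L_f^{<}(x)=\dom f$. The argument above still applies verbatim, because comparisons in $\R\cup\{+\infty\}$ behave correctly and the maximum of two values below $+\infty$ is below $+\infty$. No real obstacle is expected; this is a routine unpacking of the definition.
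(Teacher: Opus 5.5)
Your proof is correct and follows the paper's argument: apply the quasiconvexity inequality to $(1-\lambda)u_1+\lambda u_2$ and bound $\max\{f(u_1),f(u_2)\}$ by $f(x)$ (strictly below it in the strict case, which the paper dismisses as ``entirely analogous''). One harmless slip: $L_f^{\leq}(x)$ can never be empty, since it always contains $x$ itself.
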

\begin{remark}
    In fact, one can show that this is an ``if and only if'' statement, i.e., $L_f^{\leq}(x)$ being convex for all $x$ implies quasiconvexity.
\end{remark}
\begin{proof}
    Let $u_1, u_2 \in L_f^{\leq}(x)$. Then, for any $\lambda \in [0,1]$, quasiconvexity gives
    \begin{align*}
        f\parens{(1-\lambda)u_1 + \lambda u_2} \leq \max\brac{f(u_1), f(u_2)} \leq f(x),
    \end{align*}
    proving the first result. The proof of the second statement is entirely analogous.
\end{proof}

\QUASITHM*

\begin{proof}
    Fix some $x \in \dom f \backslash \brac{\cX_f + \cB(0,t)}$ and suppose for a contradiction that there exists $u\in\BProxSub{t}{f}{x}$ such that
    \begin{align*}
        \inp{x-u}{u-x_{\star}} < 0.
    \end{align*}

    Let $p \eqdef \Proj{[u, x_\star]}{x}$ be the projection of $x$ onto the line passing through $u$ and $x_\star$. Since we assume that $\inp{x-u}{u-x_{\star}} < 0$, $p$ must lie strictly between $u$ and $x_\star$. But then
    \begin{align*}
        \norm{x - p}^2 = \norm{x - u}^2 - \norm{u - p}^2
        < \norm{x - u}^2,
    \end{align*}
    and hence $p \in \cB(x, t)$, so by definition of broximal operator $f(p) \geq f(u)$. On the other hand, strict quasiconvexity implies
    \begin{align*}
        f(p) < \max\brac{f(u), f_\star} = f(u).
    \end{align*}
    This contradiction shows that \Cref{as:main1} holds.
        
    Now, suppose that $\BProxSub{t}{f}{x} = \{x\}$. Then $x$ is the unique minimizer of $f$ on $\cB(x, t)$, and so by quasiconvexity it must be the global minimizer (otherwise $L_f^{\leq}(x)$ would not be connected, and so would not be convex).
\end{proof}

\subsection{Pseudoconvexity}\label{sec:pseudoconv}

\PSEUDOCONVDEF*

\PSEUDOTHM*

\begin{proof}
    Fix some $x \in \dom f \backslash \brac{\cX_f + \cB(0,t)}$ and suppose for a contradiction that there exists $u\in\BProxSub{t}{f}{x}$ such that
    \begin{align*}
        \inp{x-u}{u-x_{\star}} < 0.
    \end{align*}
    By \Cref{lemma:nonconv_diff_cone}, we have
    \begin{align*}
        - \nabla f(u) \in \cN_{\cB(x, t)}(u) \overset{\eqref{fact:subdiff_id_ball}}{=}
        \begin{cases}
            \R_{\geq0} (u-x) & \norm{x-u} = t, \\
            \{0\} & \norm{x-u} < t.
        \end{cases}
    \end{align*}
    Since pseudoconvexity implies that every stationary point is a global minimizer, and because $x \in \dom f \backslash \brac{\cX_f + \cB(0,t)}$, we must have $u \not\in \cX_f$. Therefore, there exists $c\geq0$ such that $\nabla f(u) = c(x-u)$. Moreover, $c>0$, since otherwise $\nabla f(u)=0$ and the same contradiction would arise. Using this representation, we obtain
    \begin{align*}
        0 > \inp{x-u}{u-x_{\star}}
        = \frac{1}{c} \inp{\nabla f(u)}{u-x_{\star}}.
    \end{align*}
    By pseudoconvexity, this implies $f(x_{\star}) \geq f(u)$, contradicting the fact that $u \not\in \cX_f$. This contradiction establishes \Cref{as:main1}.

    \Cref{as:main2} holds since under pseudoconvexity any stationary point is a global minimizer.
\end{proof}

\subsubsection{Generalized Pseudoconvexity}\label{sec:gen_pseudoconv}

\Cref{def:pseudoconvex} is stated for differentiable functions. We now extend the comparison to the nondifferentiable setting by considering generalized notions of pseudoconvexity, and show that our assumptions remain strictly weaker in this broader regime.

To this end, we work with an abstract notion of a generalized subdifferential. Specifically, we only require the mapping $\partial f$ to satisfy the following properties:
\begin{itemize}
    \item $0 \in \partial f(x)$ whenever $x \in \dom(f)$ is a local minimizer of $f$,
    \item if $u \in \argmin \limits_{z \in \cB_{\mX}(x, t)} f(z)$, then $0 \in \partial f(u) + \cN_{\cB_{\mX}(x, t)}(u)$.
\end{itemize}

\begin{definition}[Generalized Pseudoconvexity]\label{as:gen-pseu-convx}
    Let $f: \R^d \mapsto \R\cup \brac{+\infty}$ be a proper function and let $\partial f \subseteq \R^d$ denote a generalized subdifferential defined for all all $x \in \dom f$.
    We say that $f$ is \emph{generalized pseudoconvex} if 
    \begin{align*}
     \parens{\exists g \in \partial f(x) \,\,\text{s.t.}\, \inp{g}{y - x} \geq 0} \quad\implies\quad f(y) \geq f(x)
    \end{align*} 
    for all $x, y \in \dom f$.
\end{definition}

\begin{remark}
    In the differentiable case, where $\partial f(x) = \brac{\nabla f(x)}$, the above condition reduces to 
    \begin{align*}
        \inp{\nabla f(x)}{y - x} \geq 0 \quad\implies\quad f(y) \geq f(x),
    \end{align*}
    which is the standard definition of pseudoconvexity.
\end{remark}

\begin{lemma}
    Suppose that $f$ is generalized pseudoconvex. Then $f \in \newclass{\mI}{t}$ for any $t>0$.
\end{lemma}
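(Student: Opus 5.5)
The plan is to mirror the proof of \Cref{thm:pseudo_conv_as}, with $\nabla f$ replaced by an element of the abstract subdifferential $\partial f$. We use only the two structural properties assumed for $\partial f$. In what follows $\mX=\mI$, and $x_\star \in \cX_f$ is arbitrary.

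\textbf{Step 1: every point with $0\in\partial f(u)$ is a global minimizer.} Suppose $0 \in \partial f(u)$ for some $u\in\dom f$. Then for every $y\in\dom f$ we can take $g=0$, so $\inp{g}{y-u}=0\geq 0$, and generalized pseudoconvexity gives $f(y)\geq f(u)$. Hence $u\in\cX_f$. (Points outside $\dom f$ have value $+\infty$ and are irrelevant.)

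\textbf{Step 2: \Cref{as:main1}.} Fix $x\in\dom f\setminus\brac{\cX_f+\cB(0,t)}$ and $u\in\BProxSub{t}{f}{x}$, which is nonempty by \Cref{thm:1stbrox_nc}. Since the ball contains no minimizer, $u\notin\cX_f$, and $u\in\dom f$ because $f(u)\leq f(x)<\infty$.
\begin{enumerate}[label=(\alph*)]
    \item By the second assumed property of $\partial f$, there are $g\in\partial f(u)$ and $n\in\cN_{\cB(x,t)}(u)$ with $g=-n$.
    \item By \Cref{fact:subdiff_id_ball}, either $n=0$, or $\norm{u-x}=t$ and $n=c(u-x)$ with $c\geq 0$.
    \item The case $n=0$ (which includes $c=0$) would give $0\in\partial f(u)$. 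Step 1 would then force $u\in\cX_f$, a contradiction.
    \item So $g=c(x-u)$ with $c>0$. Suppose for contradiction that $\inp{x-u}{x_\star-u}>0$. Then $\inp{g}{x_\star-u}>0$, and generalized pseudoconvexity gives $f(x_\star)\geq f(u)$, i.e.\ $u\in\cX_f$, again a contradiction.
\end{enumerate}
Hence $\inp{x-u}{x_\star-u}\leq 0$, which is \eqref{eq:as_main}. This holds for any choice of $x_\star\in\cX_f$.

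\textbf{Step 3: \Cref{as:main2}.} Suppose $\BProxSub{t}{f}{x}=\{x\}$. Then $x$ minimizes $f$ over $\cB(x,t)$, so it is a local minimizer of $f$. By the first assumed property, $0\in\partial f(x)$, and Step 1 gives $x\in\cX_f$.

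The main obstacle is Step 2(a). It needs the optimality condition $0\in\partial f(u)+\cN_{\cB(x,t)}(u)$ for the constrained problem, and we have this only because it was postulated as a property of the abstract $\partial f$. A second point to check is that Step 2(b) uses only $n=-g$ together with the form of the normal cone, and needs no sum rule. Everything else is sign bookkeeping, arranged so that the strict violation $\inp{x-u}{x_\star-u}>0$ is the hypothesis fed into the pseudoconvexity implication.
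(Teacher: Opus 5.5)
Your proof is correct and takes essentially the same route as the paper. The postulated constrained optimality condition gives a subgradient $g=c(x-u)$ at $u$, the degenerate case $g=0$ is excluded because it would make $u$ a global minimizer, and the sign of $\inp{x-u}{x_\star-u}$ is fed into the pseudoconvexity implication. The differences are cosmetic: you argue by contradiction rather than via the contrapositive, and your Step~1 explicitly verifies that $0\in\partial f(u)$ forces global optimality, whereas the paper simply asserts that ``every local minimizer is global.''
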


\begin{proof}
    Fix any $x \in \dom f \backslash \brac{\cX_f + \cB(0,t)}$ and let $u = \BProxSub{t}{f}{x}$. Taking the contrapositive of the condition in \Cref{as:gen-pseu-convx}, we have
    \begin{align*}
        f(x) > f(y) 
        \implies 
        \parens{\inp{g}{y - x} < 0 \quad \forall g \in \partial f(x)},
    \end{align*}
    and hence, choosing $x=u$ and $y=x_\star \in \cX_f$ yields
    \begin{align*}
        \inp{g}{x_\star - u} < 0 \quad \forall g \in \partial f(u).
    \end{align*}
    By the assumed properties of the generalized subdifferential, there exists $g_u \in \partial f(u)$ such that $- g_u \in \cN_{\cB_{\mX}(x, t)}(u)$. Since under generalized pseudoconvexity every local minimizer is global and $u\not\in\cX_f$, there exists $c \geq 0$ such that $g_u = c(x - u)$ (\Cref{fact:subdiff_id_ball}). Substituting this expression above gives 
    \begin{align*}
        \inp{c(x - u)}{x_\star - u} < 0.
    \end{align*}
    Thus $c > 0$ and $\inp{x - u}{u - x_\star} > 0$, which establishes \Cref{as:main1} for any choice of $x_\star \in \cX_f$.
    
    Finally, \Cref{as:main2} follows again from the fact that generalized pseudoconvexity ensures that all local minimizers are global minimizers.
\end{proof}

\subsection{Quasar convexity}\label{sec:quasarconv}

\QUASARCONVDEF*

\begin{lemma}\label{eq:quasar_equiv}
    Let $f: \R^d \to \R$ be $\zeta$-quasar convex with respect to $x_{\star}\in\cX_f$. Then
    \begin{align*}
        f\parens{\lambda x_\star + (1-\lambda) x} \leq \zeta \lambda f_\star + (1-\zeta \lambda) f(x)
    \end{align*}
    for all $x\in\R^d$ and $\lambda\in[0,1]$.
\end{lemma}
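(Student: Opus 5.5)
We plan to reduce the claim to a one-dimensional comparison along the segment from $x$ to $x_\star$. Fix $x\in\R^d$ and define $\phi(\lambda) \eqdef f\parens{x_\star + (1-\lambda)(x - x_\star)}$ for $\lambda\in[0,1]$, so that $\phi(0)=f(x)$ and $\phi(1)=f_\star$. Since $f$ is differentiable, $\phi$ is differentiable with
\begin{align*}
    \phi'(\lambda) = -\inp{\nabla f(y_\lambda)}{x - x_\star}, \qquad y_\lambda \eqdef x_\star + (1-\lambda)(x-x_\star).
\end{align*}
Because $y_\lambda - x_\star = (1-\lambda)(x-x_\star)$, quasar convexity applied at $y_\lambda$ gives, for $\lambda<1$,
\begin{align*}
    \phi'(\lambda) = -\frac{1}{1-\lambda}\inp{\nabla f(y_\lambda)}{y_\lambda - x_\star} \leq -\frac{\zeta}{1-\lambda}\parens{\phi(\lambda) - f_\star}.
\end{align*}
Thus $\psi(\lambda)\eqdef\phi(\lambda)-f_\star\geq 0$ satisfies a differential inequality $\psi'\leq -\frac{\zeta}{1-\lambda}\psi$.

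The next step is a Gr\"onwall-type integration. We would multiply by the integrating factor $(1-\lambda)^{-\zeta}$ and observe that $\frac{d}{d\lambda}\bigl[(1-\lambda)^{-\zeta}\psi(\lambda)\bigr] = (1-\lambda)^{-\zeta}\bigl(\psi'(\lambda)+\frac{\zeta}{1-\lambda}\psi(\lambda)\bigr)\leq 0$ on $[0,1)$. Hence $\psi(\lambda)\leq (1-\lambda)^{\zeta}\psi(0)$ for all $\lambda\in[0,1)$, and trivially also at $\lambda=1$, since $\psi(1)=0$. This is where only monotonicity of a differentiable function is needed, so no extra regularity is required.

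Finally, we would compare $(1-\lambda)^{\zeta}$ with $1-\zeta\lambda$. For $\zeta\in(0,1]$, the map $s\mapsto s^{\zeta}$ is concave, and Bernoulli's inequality gives $(1-\lambda)^{\zeta}\leq 1-\zeta\lambda$ on $[0,1]$. Combining this with $\psi(0)\geq 0$ yields
\begin{align*}
    f\parens{\lambda x_\star + (1-\lambda)x} - f_\star \leq (1-\zeta\lambda)\parens{f(x)-f_\star},
\end{align*}
which rearranges exactly to the claim.

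The only delicate point is the integration step near $\lambda=1$, where the factor $\frac{1}{1-\lambda}$ blows up. We avoid it by working on $[0,1)$ and handling the endpoint separately. If $x=x_\star$, the statement is immediate.
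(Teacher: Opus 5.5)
Your proof is correct, but it takes a different route from the paper. The paper gives no argument of its own and simply defers to Lemma~10 of \citet{hinder2019near}. Your proof is self-contained instead. Restricting $f$ to the segment from $x$ to $x_\star$ turns quasar convexity at $y_\lambda$ into the differential inequality $\psi'(\lambda)\le-\frac{\zeta}{1-\lambda}\psi(\lambda)$. The integrating factor $(1-\lambda)^{-\zeta}$ and the mean value theorem (which, as you note, needs only differentiability) then give $f(\lambda x_\star+(1-\lambda)x)-f_\star\le(1-\lambda)^{\zeta}\parens{f(x)-f_\star}$ on $[0,1)$, with $\lambda=1$ trivial. This intermediate bound is actually sharper than the stated one, which you then recover from $(1-\lambda)^{\zeta}\le 1-\zeta\lambda$ and $f(x)\ge f_\star$. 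The citation buys the paper brevity and access to a more general characterization. In fact the stated inequality is equivalent to $\zeta$-quasar convexity: subtracting $f(x)$, dividing by $\lambda$ and letting $\lambda\to0^+$ recovers the defining inequality. That converse is not needed here. Your argument buys an elementary proof of exactly the direction the paper uses, in Lemma~\ref{lemma:quasar_proj} and in the connectedness claim for $\cX_f$. It also yields the stronger $(1-\lambda)^{\zeta}$ decay of the suboptimality gap along rays toward $x_\star$.
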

\begin{proof}
    The result follows from Lemma 10 by \citet{hinder2019near}.
\end{proof}

\begin{lemma}\label{lemma:quasar_proj}
    Suppose that $f$ is $\zeta$-quasar convex with respect to $x_{\star}\in\cX_f$ and take any $x\in\R^d \backslash \brac{\cX_f + \cB(0,t)}$, where $t>0$. Let $u=\BProxSub{t}{f}{x}$ and let $\Pi(x)$ be the projection of $x$ onto the line passing through $u$ and $x_{\star}$. Then there exists $\lambda\in[0,1)$ such that $u = \lambda x_{\star} + (1-\lambda) \Pi(x)$.
\end{lemma}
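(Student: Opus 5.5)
The plan is to parametrize the line through $u$ and $x_\star$ and rule out every position of $u$ on it except the half-open segment from $\Pi(x)$ toward $x_\star$. Two facts are used throughout. First, since $\dist{x}{\cX_f}>t$, the ball $\cB(x,t)$ contains no global minimizer. Hence $u\notin\cX_f$, so $u\neq x_\star$ and $f(u)>f_\star$, and also $\norm{x-x_\star}>t\geq\norm{x-u}$. Second, write $p\eqdef\Pi(x)$. For any point $z$ on the line, Pythagoras gives $\norm{x-z}^2=\norm{x-p}^2+\norm{z-p}^2$. So along the line, distance to $x$ is a strictly increasing function of distance to $p$.

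\textbf{Degenerate case.} Suppose $p=x_\star$. Then by the Pythagorean identity $\norm{x-x_\star}\leq\norm{x-u}\leq t$, which contradicts $\norm{x-x_\star}>t$. So $p\neq x_\star$, and since $u$ lies on the line we can write $u=(1-s)p+s x_\star$ for a unique $s\in\R$. We must show $s\in[0,1)$ and then take $\lambda=s$.

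\textbf{Ruling out $s\geq 1$.} If $s=1$ then $u=x_\star\in\cX_f$, which is impossible. If $s>1$, then $x_\star$ lies strictly between $p$ and $u$. Hence $\norm{x_\star-p}<\norm{u-p}$, and the Pythagorean identity gives $\norm{x-x_\star}<\norm{x-u}\leq t$, again contradicting $\dist{x}{\cX_f}>t$.

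\textbf{Ruling out $s<0$ (main step).} In this case $p$ lies strictly between $u$ and $x_\star$. Explicitly, $p=\mu x_\star+(1-\mu)u$ with $\mu=-s/(1-s)\in(0,1)$. Since $\norm{p-p}=0<\norm{u-p}$, we get $\norm{x-p}<\norm{x-u}\leq t$, so $p\in\cB(x,t)$. Optimality of $u$ over the ball then forces $f(p)\geq f(u)$. On the other hand, \Cref{eq:quasar_equiv} applied with the point $u$ and weight $\mu$ gives
\begin{align*}
f(p)\leq \zeta\mu f_\star+(1-\zeta\mu)f(u)<f(u),
\end{align*}
where the strict inequality uses $\zeta\mu>0$ and $f(u)>f_\star$. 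This contradiction excludes $s<0$, so $s\in[0,1)$ and the claim follows with $\lambda=s$.

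This last step is the only one that uses quasar convexity, and so it carries the real content. Its only subtlety is checking that $\mu>0$ strictly and $f(u)>f_\star$ strictly, so that the inequality from \Cref{eq:quasar_equiv} is strict.
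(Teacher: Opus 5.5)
Your proposal is correct and follows essentially the same route as the paper's proof. The paper also considers which of the three collinear points $u$, $x_\star$, $\Pi(x)$ lies between the other two. It excludes $x_\star$ lying between $u$ and $\Pi(x)$ by the ball/distance argument, and excludes $\Pi(x)$ lying between $u$ and $x_\star$ via \Cref{eq:quasar_equiv} combined with $f(u)\le f(\Pi(x))$. Your parametrization $u=(1-s)\Pi(x)+s\,x_\star$ encodes the same case split, and it spells out the degenerate case $\Pi(x)=x_\star$ and the Pythagorean step that the paper dismisses as ``obvious.''
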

\begin{proof}
    First, note that $\norm{x - \Pi(x)} \leq \norm{x-u} \leq t$, so $\Pi(x) \in \cB(x, t)$, meaning that $f(u) \leq f(\Pi(x))$ by the definition of broximal operator. Clearly, $u \neq x_\star$ and $\Pi(x) \neq x_\star$ since $x\in\R^d \backslash \brac{\cX_f + \cB(0,t)}$. If $u=\Pi(x)$, then the claim of the lemma holds with $\lambda=0$. Now, suppose that the three points $u$, $x_{\star}$ and $\Pi(x)$ are distinct. There are $3$ possibilities: $\Pi(x)$ lies between $u$ and $x_{\star}$, $x_{\star}$ lies between $u$ and $\Pi(x)$, or $u$ lies between $\Pi(x)$ and $x_{\star}$.
    That $x_{\star}$ does not lie between $u$ and $\Pi(x)$ is obvious, since if that was the case, then $x_{\star}$ would belong to the ball $\cB(0,t)$, contradicting the fact that $x\in\R^d \backslash \brac{\cX_f + \cB(0,t)}$.
    Assume that $\Pi(x)$ lies between $u$ and $x_{\star}$, i.e., there exists $\lambda\in(0,1)$ such that $\Pi(x) = \lambda x_{\star} + (1-\lambda) u$. Then, \Cref{eq:quasar_equiv} gives
    \begin{align*}
        f(\Pi(x))
        = f(\lambda x_{\star} + (1-\lambda) u)
        \leq \zeta \lambda f_{\star} + (1 - \zeta \lambda) f(u)
        < \zeta \lambda f(\Pi(x)) + (1 - \zeta \lambda) f(\Pi(x)),
    \end{align*}
    where the last inequality follows from the fact that $f_{\star} < f(u) \leq f(\Pi(x))$.
    This contradiction finishes the proof.
\end{proof}

\QUASARTHM*

\begin{proof}
    By \Cref{lemma:quasar_proj}, we have
    \begin{eqnarray*}
        \inp{x-u}{x_{\star}-u}
        &=& (1-\lambda) \inp{x - u}{x_{\star} - \Pi(x)} \\
        &=& (1-\lambda) \inp{x - \Pi(x)}{x_{\star} - \Pi(x)} + (1-\lambda) \inp{\Pi(x) - u}{x_{\star} - \Pi(x)} \\
        &=& 0 - (1-\lambda) \norm{\Pi(x) - u} \norm{\Pi(x) - x_{\star}}
        \leq 0,
    \end{eqnarray*}
    so \Cref{as:main1} holds.

    Now, suppose that $\BProxSub{t}{f}{x} = \{x\}$. Then $x$ is the unique minimizer of $f$ on $\cB_{\mX}(x, t)$, and so it must be the global minimizer (otherwise $\cX_f$ would not be connected, which cannot hold under quasar convexity -- see \Cref{eq:quasar_equiv}).
\end{proof}

\subsection{Aiming Condition}\label{sec:aiming}

\AIMINGDEF*

\begin{lemma}\label{lemma:aiming_part}
    Let $f$ satisfy the aiming condition. Then \Cref{as:main2} holds with any $t>0$ and
    \begin{align*}
        \inp{x-u}{u-\bar{u}} \geq 0
    \end{align*}
    for any $x \in \dom f \backslash \brac{\cX_f + \cB(0,t)}$, where $\bar{u} \in \Proj{\cX_f}{u}$.
\end{lemma}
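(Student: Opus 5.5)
The plan is to mirror the proof of \Cref{thm:pseudo_conv_as}. We use the optimality condition for the ball-constrained subproblem from \Cref{lemma:nonconv_diff_cone}, and replace pseudoconvexity by the aiming inequality \eqref{eq:aiming} evaluated at the point $u$ itself, with reference point $\bar{u} \in \Proj{\cX_f}{u}$.

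\textbf{Step 1: \Cref{as:main2}, and stationary points are global minima.} If $\nabla f(x)=0$, then \eqref{eq:aiming} gives $\theta f(x) \le 0$. Since $f_\star = 0$, we have $f(x)\ge 0$, and $\theta>0$ forces $f(x)=0$, so $x\in\cX_f$. Now suppose $\BProxSub{t}{f}{x}=\{x\}$. Then $x$ is an interior minimizer of $f$ over $\cB(x,t)$, hence a local minimizer, so $\nabla f(x)=0$ (equivalently, \Cref{lemma:nonconv_diff_cone} with $\norm{x-x}<t$ gives a zero normal cone). Therefore $x\in\cX_f$, which establishes \Cref{as:main2} for every $t>0$.

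\textbf{Step 2: the inequality.} Fix $x\in\dom f\setminus\brac{\cX_f+\cB(0,t)}$ and $u\in\BProxSub{t}{f}{x}$.
\begin{itemize}
\item Since $\cB(x,t)\cap\cX_f=\emptyset$, we have $u\notin\cX_f$, so $f(u)>0$.
\item By \Cref{lemma:nonconv_diff_cone} and \Cref{fact:subdiff_id_ball}, $-\nabla f(u)\in\cN_{\cB(x,t)}(u)$. If $\norm{x-u}<t$, this would give $\nabla f(u)=0$, contradicting Step 1. Hence $\norm{x-u}=t$ and $\nabla f(u)=c(x-u)$ for some $c\ge0$.
\item Again $c>0$, because $c=0$ would make $u$ stationary.
\item Applying \eqref{eq:aiming} at $u$ with the projection $\bar{u}$ supplied by the definition gives
\begin{align*}
0<\theta f(u)\le \inp{\nabla f(u)}{u-\bar{u}} = c\inp{x-u}{u-\bar{u}}.
\end{align*}
\end{itemize}
Dividing by $c>0$ yields $\inp{x-u}{u-\bar{u}}>0\ge0$, as claimed.

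\textbf{Main obstacle.} The delicate point is the quantifier on $\bar{u}$. The definition of the aiming condition only guarantees the inequality for \emph{some} $\bar{u}\in\Proj{\cX_f}{u}$, whereas the lemma is phrased with $\bar{u}\in\Proj{\cX_f}{u}$. I would read the lemma as holding for the projection point furnished by the aiming condition, and state the claim for that $\bar{u}$; if $\cX_f$ has unique projections (e.g., it is convex or a singleton), the distinction disappears. A second minor point is that \Cref{lemma:nonconv_diff_cone} requires $f$ to be differentiable on all of $\R^d$, which is part of the aiming definition, so it applies directly.
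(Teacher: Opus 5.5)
Your proposal is correct and follows essentially the same route as the paper. Both arguments use the optimality condition from \Cref{lemma:nonconv_diff_cone} and the fact that aiming makes every stationary point a global minimizer, which forces $u$ onto the boundary with $\nabla f(u)=c(x-u)$, $c>0$; both then apply the aiming inequality at $u$ with the projection $\bar{u}$ supplied by the definition, which is exactly how the paper reads the quantifier on $\bar{u}$. Your explicit verification of \Cref{as:main2} and your strict inequality are harmless refinements. The paper's proof of this lemma leaves \Cref{as:main2} implicit and only carries it out in the proof of \Cref{thm:aiming}.
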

\begin{proof}
    The proof follows the same structure as that of \Cref{thm:pseudo_conv_as}. For completeness, we present the full argument again.
    Let $x \in \dom f \backslash \brac{\cX_f + \cB(0,t)}$ and $u \in \BProxSub{t}{f}{x}$.
    By \Cref{lemma:nonconv_diff_cone}, we have
    \begin{align*}
        - \nabla f(u) \in \cN_{\cB(x, t)}(u) \overset{\eqref{fact:subdiff_id_ball}}{=}
        \begin{cases}
            \R_{\geq0} (u-x) & \norm{x-u} = t, \\
            \{0\} & \norm{x-u} < t.
        \end{cases}
    \end{align*}
    Since the aiming condition implies that every stationary point is a global minimizer, and because $x \in \dom f \backslash \brac{\cX_f + \cB(0,t)}$, we must have $u \not\in \cX_f$. Therefore, there exists $c\geq0$ such that $\nabla f(u) = c(x-u)$. In fact, $c$ must be strictly positive. Indeed, if we had $c=0$, then $u$ would be a global minimizer, again contradicting the choice of $x$. Now, let $\bar{u} \in \Proj{\cX_f}{u}$ be the point for which \eqref{eq:aiming} holds. Then
    \begin{align*}
        \inp{x-u}{u-\bar{u}} = \frac{1}{c} \inp{\nabla f(u)}{u-\bar{u}}
        \geq \frac{\theta}{c} f(u) \geq 0,
    \end{align*}
    proving the claim.
\end{proof}

\AIMINGTHM*
\begin{proof}
    By \Cref{lemma:aiming_part}, for any $x \in \dom f \backslash \brac{\cX_f + \cB(0,t)}$ and $t>0$, we have
    \begin{align*}
        \inp{x-u}{u-\bar{u}} \geq 0,
    \end{align*}
    where $\bar{u} \in \Proj{\cX_f}{u}$. Since the minimizer is unique, $\cX_f = \{x_\star\}$ is a singleton, and hence \Cref{as:main1} holds.

    Now, suppose that $\BProxSub{t}{f}{x} = \{x\}$. Then $x$ is the unique minimizer of $f$ on $\cB(x, t)$, and so $\theta f(x) \leq \inp{\nabla f(x)}{x - \bar{x}} = 0$, meaning that $x\in\cX_f$. Hence \Cref{as:main2} holds as well.
\end{proof}

Lastly, we establish the connectedness of$\cX_f$ under aiming condition under the aiming condition (as used in \Cref{tab:summary}).

\begin{proposition}[Connectedness of minimizers under aiming]\label{prop:aiming_connected}
    Let $f:\R^d\to\R$ be differentiable and satisfy the aiming condition. Then the set of global minimizers $\cX_f$ is connected.
\end{proposition}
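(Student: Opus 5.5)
The plan is to argue by contradiction using a descent path that never moves away from $\cX_f$. Since $f$ is continuous, $\cX_f=f^{-1}(0)$ is closed. Suppose it is disconnected, so $\cX_f = A\cup B$ with $A,B$ nonempty, closed and disjoint. Define $\psi(x)\eqdef \dist{x}{A}-\dist{x}{B}$. This is continuous, and on $\cX_f$ it is nonzero: $\psi<0$ on $A$ and $\psi>0$ on $B$, because a point of $A$ has positive distance to the closed set $B$ and vice versa. Pick $a\in A$ and $b\in B$. By the intermediate value theorem, the segment $[a,b]$ contains a point $x_0$ with $\psi(x_0)=0$, and hence $x_0\notin\cX_f$. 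The goal is a continuous path from $x_0$ into $\cX_f$ that can be controlled well enough to yield a contradiction. Alternatively, I will use the family of such paths started from every point of $[a,b]$.

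The key estimate comes from the aiming inequality \eqref{eq:aiming}. Let $d(x)\eqdef\dist{x}{\cX_f}$, and let $\bar x\in\Proj{\cX_f}{x}$ be the point supplied by the definition. Then $d(y)^2\le\norm{y-\bar x}^2$ for every $y$, with equality at $y=x$. Hence along any curve with $\dot x=-v(x)$, where $\inp{v}{\nabla f}\ge0$ and $v$ is a positive multiple of $\nabla f$, the upper Dini derivative satisfies
\begin{align*}
\tfrac{d}{ds}\, d(x(s))^2 \le -2\inp{v(x)}{x-\bar x}\le -2\theta\,\frac{\norm{v}}{\norm{\nabla f}}\, f(x)\le 0 .
\end{align*}
So $d$ is non-increasing and $f$ is non-increasing along gradient-type flows. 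I will take the flow $\dot x=-\nabla f(x)$. For existence I will assume $\nabla f$ is continuous; differentiability alone may need a mollification or a discrete-step version of the argument. Integrating the estimate gives $\int_0^\infty f(x(s))\,ds\le d(x_0)^2/(2\theta)$, and since $f(x(s))$ is monotone this forces $f(x(s))\to0$. Consequently, every accumulation point of the trajectory lies in $\cX_f$.

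The combinatorial step runs the flow from each point of $[a,b]$. Points of $[a,b]$ near $a$ stay close to $A$, because $d$ does not increase and, by continuity of $\psi$, they start closer to $A$ than to $B$. Similarly, points near $b$ flow into $B$. Continuous dependence on initial data then produces a point whose trajectory approaches neither $A$ nor $B$, or equivalently one whose limit set meets $\{\psi=0\}$. But $\{\psi=0\}$ is disjoint from $\cX_f$ while the limit set lies in $\cX_f$, which is a contradiction.

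I expect the main obstacle to be compactness: trajectories need not be bounded when $\cX_f$ is unbounded, and the flow need not have finite length. If so, limit points may fail to exist, and the continuity-in-initial-data argument breaks. My first attempt will localize. Because $d$ decreases and $d(x(s))^2$ is controlled, I will try to show that $x(s)$ stays in a bounded neighbourhood of $\Proj{\cX_f}{x_0}$. The hope is to do this by bounding the length $\int\norm{\nabla f}\,ds$ via the reparametrized flow $\dot x=-f\nabla f/\norm{\nabla f}^2$, for which $f$ decays exponentially and the speed is at most $d/\theta$. Failing that, I will restrict to bounded $A,B$ (or to the compact pieces of $\cX_f$ near $a$ and $b$), where the positive separation $\dist{A}{B}>0$ makes the argument straightforward.
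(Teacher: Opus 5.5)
\textbf{There is a genuine gap.} Your contradiction rests on two properties of the basins $U_A=\{p\in[a,b]:\text{the trajectory from } p \text{ eventually enters } \{\dist{\cdot}{\cX_f}<\delta/2,\ \psi<0\}\}$ and the analogous $U_B$: they must be open, and together they must cover $[a,b]$.

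Openness needs solutions of $\dot x=-\nabla f(x)$ that are unique and depend continuously on the initial point. In general this requires something like local Lipschitz continuity of $\nabla f$. The proposition assumes only differentiability, and then the flow need not exist, let alone be unique. You also do not show that mollification or a discrete scheme would preserve $\cX_f$ and the aiming inequality.

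Covering needs $\dist{x(s)}{\cX_f}\to0$, which you obtain only through accumulation points. When trajectories are unbounded, $f(x(s))\to0$ alone does not give it, and your length estimate does not close. In the reparametrized flow the speed is $f/\norm{\nabla f}\le\dist{x_0}{\cX_f}/\theta$, but $f=f(x_0)e^{-s}$ never vanishes, so the time horizon is infinite and no finite length follows.

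As written, the argument therefore proves the claim only for compact $\cX_f$ and locally Lipschitz $\nabla f$. In that setting it is correct. With $\delta=\dist{A}{B}>0$, the set $\{\psi=0\}$ lies in $\{\dist{\cdot}{\cX_f}\ge\delta/2\}$, so a trajectory that has come within $\delta/2$ of $A$ can never reach $B$.

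\textbf{The missing idea is that no dynamics is needed.} Take $x_1\in A$ and $x_2\in B$ with $\norm{x_1-x_2}=\dist{A}{B}=:\delta$. Apply Rolle's theorem to $\varphi(s)=f((1-s)x_1+sx_2)$, which vanishes at $s=0$ and $s=1$. This gives $y=(1-s_0)x_1+s_0x_2$ with $s_0\in(0,1)$ and $\inp{\nabla f(y)}{x_2-x_1}=0$.

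By the triangle inequality and the minimality of $\delta$, every $z\in A\setminus\{x_1\}$ satisfies $\norm{z-y}>\norm{x_1-y}$, and likewise for $B$ with $x_2$. Hence $\Proj{\cX_f}{y}\subseteq\{x_1,x_2\}$, and $y-\bar x$ is a multiple of $x_2-x_1$. The aiming inequality then gives $\theta f(y)\le 0$, i.e.\ $y\in\cX_f$. This contradicts $\dist{y}{\cX_f}=\min\{\norm{y-x_1},\norm{y-x_2}\}>0$.

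This is the paper's proof, and it uses only differentiability of $f$. The paper, however, also takes for granted (``by closedness'') that the distance between the two pieces is positive and attained. That holds when one piece is compact, but can fail for unbounded closed sets, e.g.\ $\{(u,v)\in\R^2: v=0\}$ and $\{(u,v)\in\R^2: uv=1,\ u>0\}$. So the unbounded case you flag is a real issue that the paper's argument does not settle either. Under the compactness you are prepared to fall back on, the Rolle argument is the right tool and removes all extra regularity requirements.
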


\begin{proof}
    Since $\cX_f = f^{-1}(\{0\})$ and $f$ is continuous, $\cX_f$ is closed. Assume for contradiction that $\cX_f$ is disconnected. Then there exist two nonempty, disjoint, closed, connected components $C_1,C_2\subseteq\cX_f$ such that $\cX_f = C_1 \cup C_2$ and $C_1\cap C_2=\emptyset$. Among all such pairs, choose $C_1$, $C_2$ so that the distance between them is minimal. Since $C_1$ and $C_2$ are disjoint closed sets in $\R^d$, the distance
    \begin{align*}
        d \eqdef \inf_{x\in C_1,\, y\in C_2} \norm{x-y}
    \end{align*}
    is strictly positive. Moreover, by closedness, there exist points $x_1\in C_1$ and $x_2\in C_2$ such that $\norm{x_1-x_2}=d$. Consider the line segment
    \begin{eqnarray*}
        \gamma(t) \eqdef (1-t)x_1 + t x_2,\qquad t\in[0,1],
    \end{eqnarray*}
    and define the function $\varphi(t) \eqdef f(\gamma(t))$. Since $f$ is differentiable, the function $\varphi$ is differentiable on $[0,1]$. Moreover,
    \begin{eqnarray*}
        \varphi(0)=f(x_1)=0,\qquad \varphi(1)=f(x_2)=0,
    \end{eqnarray*}
    so by Rolle's theorem, there exists $t_0\in(0,1)$ such that $\varphi'(t_0)=0$. Using the chain rule, we obtain
    \begin{eqnarray*}
        \varphi'(t) = \inp{\nabla f(\gamma(t))}{x_2-x_1},
    \end{eqnarray*}
    and hence
    \begin{eqnarray}\label{eq:oaihegrn}
        \inp{\nabla f(\gamma(t_0))}{x_2-x_1} = 0.
    \end{eqnarray}
    Now, let $\bar{x} \in \Proj{\cX_f}{\gamma(t_0)}$ be a projection point for which the aiming condition holds. We claim that $\bar{x}$ is either $x_1$ or $x_2$. Indeed, suppose there exists another point $z \in \Proj{\cX_f}{\gamma(t_0)}$ such that $z\not\in\{x_1, x_2\}$. By the minimality of $d$ and the triangle inequality, we have
    \begin{eqnarray*}
        \norm{x_1 - \gamma(t_0)} + \norm{\gamma(t_0) - x_2} = d \leq \norm{x_1 - z} \leq \norm{x_1 - \gamma(t_0)} + \norm{\gamma(t_0) - z}.
    \end{eqnarray*}
    It follows that $\norm{\gamma(t_0) - x_2} \leq \norm{\gamma(t_0) - z}$. On the other hand, since $z \in \Proj{\cX_f}{\gamma(t_0)}$, we also have $\norm{\gamma(t_0) - x_2} \geq \norm{\gamma(t_0) - z}$, meaning that $\norm{\gamma(t_0) - x_2} = \norm{\gamma(t_0) - z}$. But then
    \begin{eqnarray*}
        \norm{x_1 - z}^2 &=& \norm{x_1 - \gamma(t_0)}^2 + \norm{z - \gamma(t_0)}^2 - 2 \inp{x_1 - \gamma(t_0)}{z - \gamma(t_0)} \\
        &=& \norm{x_1 - \gamma(t_0)}^2 + \norm{z - \gamma(t_0)}^2 - 2 \norm{x_1 - \gamma(t_0)} \norm{z - \gamma(t_0)} \cos(\theta_1) \\
        &<& \norm{x_1 - \gamma(t_0)}^2 + \norm{z - \gamma(t_0)}^2 + 2 \norm{x_1 - \gamma(t_0)} \norm{z - \gamma(t_0)} \\
        &=& \norm{x_1 - \gamma(t_0)}^2 + \norm{x_2 - \gamma(t_0)}^2 - 2 \norm{x_1 - \gamma(t_0)} \norm{x_2 - \gamma(t_0)} \cos(\theta_2) \\
        &=& \norm{x_1 - \gamma(t_0)}^2 + \norm{x_2 - \gamma(t_0)}^2 - 2 \inp{x_1 - \gamma(t_0)}{x_2 - \gamma(t_0)} \\
        &=& \norm{x_1 - x_2}^2
    \end{eqnarray*}
    where $\theta_1$ is the angle between the vectors $x_1 - \gamma(t_0)$ and $z - \gamma(t_0)$ and $\theta_2$ is the angle between the vectors $x_1 - \gamma(t_0)$ and $x_2 - \gamma(t_0)$. This contradicts the minimality of $d$.

    So suppose without loss of generality that $\bar{x} = x_1$. Then the vector $\gamma(t_0) - \bar{x} = \gamma(t_0) - x_1$ is parallel to $x_2-x_1$, and hence, by aiming condition and \eqref{eq:oaihegrn},
    \begin{eqnarray*}
        \theta f(\gamma(t_0)) \leq \inp{\nabla f(\gamma(t_0))}{\gamma(t_0) - \bar{x}} = 0.
    \end{eqnarray*}
    Since $\theta>0$, it follows that $f(\gamma(t_0))=0$, i.e., $\gamma(t_0)\in\cX_f$. However, since $t_0\in(0,1)$, we have $\norm{x_1 - \gamma(t_0)} < d$, which contradicts the minimality of $d$. Therefore, $\cX_f$ must be connected.
\end{proof}

\newpage

\section{Monotonicity of Function Classes}

In this section, we study how the classes of functions satisfying our main assumptions behave as the radius parameter~$t$ changes.  We show that functions satisfying only Assumption~\ref{as:main2} form a monotone class, while functions satisfying Assumption~\ref{as:main1} do not in general exhibit monotonicity in $t$.

That said, if monotonicity in the radius is desired, there exists a subclass of $\newclass{\mX}{t}$, which we term the \emph{Uniform Broximal Alignment} class (denoted $\newclassu{\mX}{t}$; see \Cref{sec:uba}), that \emph{is} monotone in $t$.

We first define the class of functions that satisfy Assumption~\ref{as:main2} alone.

\begin{definition}\label{def:assump2}
    For $t>0$, define
    \begin{align*}
        \cF_2(t) \eqdef& \brac{f : f \text{ is proper and satisfies \Cref{as:main2} with } t > 0} \\
        =& \brac{f \text{ proper} : \forall x \in \dom f, \,\BProxSub{t}{f}{x} = \{x\} \,\implies\, x \in \cX_f}.
    \end{align*}
\end{definition}

The following proposition shows that $\cF_2(t)$ is monotone with respect to $t$.

\begin{proposition}\label{prop:monotonicityt-asmp2}
    For $0 < t_1 \leq t_2$, we have
    \begin{align*}
        \cF_2(t_1) \subseteq \cF_2(t_2).
    \end{align*}
\end{proposition}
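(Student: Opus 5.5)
Fix $f \in \cF_2(t_1)$ and take any $x \in \dom f$ with $\BProxSub{t_2}{f}{x} = \{x\}$. We want to show $x \in \cX_f$. The plan is to pass from the larger ball to the smaller one: we show that $x$ is also the unique broximal point at radius $t_1$, and then invoke \Cref{as:main2} for $t_1$.

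\textbf{Step 1: $x$ is a minimizer on the smaller ball.} Since $t_1 \leq t_2$, we have $\cB_{\mX}(x,t_1) \subseteq \cB_{\mX}(x,t_2)$. The hypothesis says $f(x) < f(z)$ for every $z \in \cB_{\mX}(x,t_2)\setminus\{x\}$. Restricting to the smaller ball gives $f(x) < f(z)$ for all $z \in \cB_{\mX}(x,t_1)\setminus\{x\}$.

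\textbf{Step 2: uniqueness on the smaller ball.} Obviously $x \in \cB_{\mX}(x,t_1)$. By \Cref{thm:1stbrox_nc}, the set $\BProxSub{t_1}{f}{x}$ is nonempty, and by Step 1 its only element can be $x$. Hence $\BProxSub{t_1}{f}{x} = \{x\}$.

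\textbf{Step 3: conclude.} Since $f \in \cF_2(t_1)$, \Cref{as:main2} at radius $t_1$ yields $x \in \cX_f$. As $x$ was arbitrary, $f \in \cF_2(t_2)$.

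None of these steps is a genuine obstacle. The only point requiring care is the strict-inequality reformulation of "$\BProxSub{t_2}{f}{x}=\{x\}$", namely that $x$ is the unique minimizer over the ball, and the observation that strict minimality is inherited by any subset of the ball that contains $x$.
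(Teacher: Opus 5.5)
Your proof is correct and is essentially the paper's argument: the inclusion $\cB_{\mX}(x,t_1)\subseteq\cB_{\mX}(x,t_2)$ makes $x$ the unique minimizer of $f$ on the smaller ball, so \Cref{as:main2} at radius $t_1$ yields $x\in\cX_f$. The appeal to \Cref{thm:1stbrox_nc} in Step 2 is unnecessary and would also need $f$ to be closed. It can be dropped because $x$ itself lies in $\cB_{\mX}(x,t_1)$ and strictly beats every other point there, so $\BProxSub{t_1}{f}{x}=\{x\}$ follows directly.
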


\begin{proof}
    Take any $f \in \cF_2(t_1)$ and fix $x \in \dom f$ such that $\BProxSub{t_2}{f}{x} = \{x\}$. We want to show that $x \in \cX_f$.
    Since $t_1 \leq t_2$, we have $B_{t_1}(x) \subseteq B_{t_2}(x)$, so $x$ is also the unique minimizer of $f$ over $B_{t_1}(x)$, i.e.,
    \begin{align*}
        \BProxSub{t_1}{f}{x} = \{x\}.
    \end{align*}
    By definition of $\cF_2(t_1)$, we know that $x \in \cX_f$, and hence $f \in \cF_2(t_2)$.
\end{proof}
Thus, the class $\cF_2(t)$ grows as $t$ increases.

Next, we consider the class of functions satisfying Assumption~\ref{as:main1}:
\begin{definition}\label{def:gf1t}
    For $t>0$, define
    \begin{align*}
        \cF_1(t) \eqdef \brac{f: f \text{ is proper and satisfies \Cref{as:main1} with radius } t > 0}.
    \end{align*}
\end{definition}

Unlike $\cF_2(t)$, the class $\cF_1(t)$ is generally \emph{not} monotone in $t$.

\begin{proposition}\label{prop:nonmonotone}
The class $\cF_1(t)$ is in general not monotone in $t$: for $t_1 < t_2$, neither $\cF_1(t_1) \subseteq \cF_1(t_2)$ nor $\cF_1(t_2) \subseteq \cF_1(t_1)$ holds in general.
\end{proposition}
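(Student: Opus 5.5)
The plan is to work in dimension $d=1$ with $\mX=\mI$ and build two explicit closed, proper functions, one for each failed inclusion. In one dimension \Cref{as:main1} has a simple form. Fix $x_\star\in\cX_f$. For every $x\in\dom f$ with $\dist{x}{\cX_f}>t$ and every $u\in\BProxSub{t}{f}{x}$ we need $(x-u)(x_\star-u)\le 0$, i.e.\ $u$ lies between $x$ and $x_\star$. A violation therefore means the broximal step moves $x$ \emph{away} from every admissible $x_\star$. Since the statement concerns $\cF_1(t)$ only, \Cref{as:main2} does not need checking.

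\textbf{Step 1: $\cF_1(t_2)\not\subseteq\cF_1(t_1)$.} Take $f(x)=\min\{|x|,|x-20|\}$, so $\cX_f=\{0,20\}$, with $t_1=1$ and $t_2=10$.
\begin{itemize}
\item For $t_2$: every $x\in(0,20)$ is within distance $10$ of $\cX_f$, so the condition is vacuous there. The only relevant points are $x<-10$ and $x>30$. For $x<-10$ the step goes right toward $0$. For $x>30$ it goes left to $x-10$, which lies between $x$ and $x_\star=0$. So $f\in\cF_1(10)$ with $x_\star=0$.
\item For $t_1=1$: take $x=15$. Then $u=16$, and $(15-16)(0-16)>0$ rules out $x_\star=0$. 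Symmetrically, $x=5$ gives $u=4$, and $(5-4)(20-4)>0$ rules out $x_\star=20$. Hence $f\notin\cF_1(1)$.
\end{itemize}

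\textbf{Step 2: $\cF_1(t_1)\not\subseteq\cF_1(t_2)$.} This is the harder direction. Any ``attracting'' dip that is not globally optimal is also seen at small radius by nearby points, which would already break alignment at $t_1$. My plan is to exploit that \Cref{as:main1} only quantifies over $x\in\dom f$, and use a domain with gaps. Let $\dom f=[0,1]\cup\{4\}\cup\{7\}$ with $f(x)=x$ on $[0,1]$, $f(4)=5$, $f(7)=1/2$, and $f\equiv+\infty$ elsewhere. This $f$ is proper and closed, since its domain is a closed set and it is continuous on each piece. Its unique minimizer is $0$, so $\cX_f=\{0\}$.
\begin{itemize}
\item For $t_1=1$: the points with distance greater than $1$ from $0$ are $4$ and $7$. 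Their balls meet $\dom f$ only in themselves, so $u=x$ and the inner product is $0$. Thus $f\in\cF_1(1)$.
\item For $t_2=3$: $x=4$ has distance $4>3$ from $\cX_f$. Its ball $[1,7]$ contains $1$ (value $1$), $4$ (value $5$) and $7$ (value $1/2$). So $u=7$, and $(4-7)(0-7)=21>0$, violating \Cref{as:main1}. Thus $f\notin\cF_1(3)$.
\end{itemize}

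The remaining routine checks are lower semicontinuity of both constructions and that the broximal sets are exactly as claimed. The only delicate point I expect is convincing the reader that the domain-gap construction is legitimate, which follows from the standing setting of proper, closed extended-valued functions. If a full-domain example were wanted, I would try to smooth it by taking $f$ very large on the gaps, checking that no point in a gap sees the far dip within radius $1$.
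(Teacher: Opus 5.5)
Your proof is correct. Both constructions check out. In Step 1, $\cX_f=\{0,20\}$: all of $(0,20)$ is exempt at radius $10$, the outer points step toward $0$, and at radius $1$ the points $x=15$ and $x=5$ rule out $x_\star=0$ and $x_\star=20$ respectively. In Step 2, $\cX_f=\{0\}$: the points $4$ and $7$ are isolated at radius $1$, and at radius $3$ the point $7$ pulls $x=4$ away from $0$.

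The overall strategy is the paper's: explicit counterexamples that exploit how the radius changes both which points are exempt from \Cref{as:main1} and which points a ball can see. Your Step 2 is essentially the one-dimensional version of the paper's first example, where the isolated low point $(3,2)$ becomes visible from $(3,0)$ only at radius $2$.

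The genuine difference is in the direction $\cF_1(t_2)\not\subseteq\cF_1(t_1)$.
\begin{itemize}
\item The paper uses the three-point domain $\{(0,0),(2,0),(2,1)\}$ with a unique minimizer. At radius $1$ the step from $(2,0)$ is pulled sideways to $(2,1)$, and at radius $3$ the condition is vacuous.
\item You use a continuous, full-domain function with two minimizers. The radius-$1$ failure comes from the quantifier in \Cref{as:main1}: a single $x_\star$ must serve every $x$.
\end{itemize}
Your example shows that this direction of non-monotonicity is not an artifact of restricted domains. The paper's shows that it does not rely on non-unique minimizers.

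One caveat concerns your closing aside: filling the gaps in Step 2 cannot work, for the reason you yourself anticipated. Points of $[6,7)$ would see the dip at $7$ within radius $1$ and step away from $0$. In fact, take $d=1$, $\dom f=\R$ and $\cX_f=\{0\}$. Membership in $\cF_1(t_1)$ gives $f(z)>\min_{[x-t_1,x]}f$ for $z\in(x,x+t_1]$ whenever $x>t_1$. Chaining this along $x,\,x+t_1,\,x+2t_1,\dots$ extends it to all $z>x$. Together with the symmetric statement for $x<-t_1$, this yields $\cF_1(t_1)\subseteq\cF_1(t_2)$ for $t_1<t_2$. So in that setting the domain gaps are a necessity, not a convenience.
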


\begin{proof}
    We provide two concrete examples.

    \textbf{Example 1:} $f \in \cF_1(1)$ but $f \notin \cF_1(2)$.

    Let $f:\R^2\mapsto\R$ be defined on 
    \begin{align*}
        \dom f = \{(0,0), (1,0), (2,0), (3,0), (3,2)\},
    \end{align*}
    with values
    \begin{align*}
        f(0, 0) = 0,\quad f(1, 0) = 1,\quad f(2, 0) = 2,\quad f(3, 0) = 3,\quad f(3, 2) = 0.5.
    \end{align*}
    One can easily check that for radius $t=1$, all points outside $\dom f \backslash \brac{\cX_f + B(0,1)} = \{(2,0), (3,0), (3,2)\}$ satisfy Assumption~\ref{as:main1}:
    \begin{itemize}
        \item $x=(2,0)$: feasible points $B_1(x) \cap \dom f = \{(1,0),(2,0),(3,0)\}$, so $u = \BProxSub{1}{f}{x} = \{(1,0)\}$, and
        \begin{align*}
            \inp{x-u}{u-x_\star} = \inp{(2,0)-(1,0)}{(1,0)-(0,0)} = 1 \geq 0.
        \end{align*}
        
        \item $x=(3,0)$: feasible points $\{(2,0),(3,0)\}$, so $u = \BProxSub{1}{f}{x} = \{(2,0)\}$, and
        \begin{align*}
            \inp{x-u}{u-x_\star} = \inp{(3,0)-(2,0)}{(2,0)-(0,0)} = 2 \geq 0.
        \end{align*}
        
        \item $x=(3,2)$: feasible points $\{(3,2)\}$, so $u = \BProxSub{1}{f}{x} = \{(3,2)\}$, and
        \begin{align*}
            \inp{x-u}{u-x_\star} = 0.
        \end{align*}
    \end{itemize}
    Hence $f \in \cF_1(1)$. Now, we show that $f \notin \cF_1(2)$. Indeed, consider $x=(3,0)$. Then $\BProxSub{2}{f}{x} = \{(3,2)\}$ and
    \begin{align*}
        \inp{x-u}{u-x_\star} = \inp{(3,0)-(3,2)}{(3,2)-(0,0)} = -4 < 0,
    \end{align*}
    which violates Assumption~\ref{as:main1}. Hence $f \notin \cF_1(2)$.

    \textbf{Example 2:} $f \in \cF_1(3)$ but $f \notin \cF_1(1)$.

    Let $f:\R^2\mapsto\R$ be defined on
    \begin{align*}
        \dom f = \{(0,0),(2,0),(2,1)\},
    \end{align*}
    with values
    \begin{align*}
        f(0,0)=0,\quad f(2,0)=1,\quad f(2,1)=0.1,
    \end{align*}
    with global minimizer $x_\star = (0,0)$.  
    
    We first show that $f \notin \cF_1(1)$. Consider $x=(2,0)$. The feasible points in $B_1(x)$ are $(2,0)$ and $(2,1)$, and $u = \BProxSub{1}{f}{x} = \{(2,1)\}$.
    Then
    \begin{align*}
        \inp{x-u}{u-x_\star} = \inp{(2,0)-(2,1)}{(2,1)-(0,0)} = -1 < 0,
    \end{align*}
    violating Assumption~\ref{as:main1}. Hence $f \notin \cF_1(1)$.
    
    Now, we show that $f \in \cF_1(3)$. For $t=3$, observe that
   \begin{align*}
        \norm{(2,0)} = 2 < 3,
        \qquad
        \norm{(2,1)} = \sqrt{5} < 3,
   \end{align*}
   meaning that
    \begin{align*}
        \dom f \subset \cX_f + B_3(0).
    \end{align*}
    Therefore, there are no points outside $\cX_f + B_3(0)$, and Assumption~\ref{as:main1} is trivially satisfied. Hence $f \in \cF_1(3)$.
\end{proof}

\subsection{Uniform Broximal Alignment}\label{sec:uba}

\begin{definition}[Uniform Broximal Alignment]\label{as:uba}
    Let $f:\R^d \to \R \cup \{+\infty\}$ be a proper, closed function. We say that $f$ satisfies the \emph{uniform broximal alignment with radius $t > 0$} if there exists $x_{\star} \in \cX_f$ such that, for every $$x \in \dom f \backslash \brac{\cX_f + \cB_{\mX}(0,t)}$$
    and every $z \in \dom f$ satisfying $$f(z) \leq f(x),$$
    we have
    \begin{align*}
        \inp{x-z}{x_{\star}-z}_{\mX} \leq 0.
    \end{align*}
    If this holds, we will write $f\in \newclassu{\mX}{t}$.
\end{definition}

Uniform alignment is strictly stronger than broximal alignment: it requires the alignment inequality for all points $z$ with $f(z) \leq f(x)$, whereas \Cref{as:main1} from our paper only requires it for those $z$ that are broximal minimizers of $f$ over $\cB_{\mX}(x, t)$.

We now show that the uniform alignment condition is monotonic in the radius.

\begin{theorem}
    Let $f:\R^d \to \R \cup \{+\infty\}$ be a proper, closed function and let $t' \geq t > 0$. Then
    \begin{align*}
        \newclassu{\mX}{t} \subseteq \newclassu{\mX}{t'}.
    \end{align*}
\end{theorem}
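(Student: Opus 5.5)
The plan is to show the inclusion directly, keeping the same witness $x_\star$. The key observation is that the quantifier structure in the definition of uniform broximal alignment depends on $t$ only through the excluded region $\cX_f + \cB_{\mX}(0,t)$. That region grows with $t$, so the set of points $x$ at which the alignment inequality must be checked shrinks.

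Concretely, take $f \in \newclassu{\mX}{t}$ and let $x_\star \in \cX_f$ be the point given by the definition for radius $t$. For $t' \geq t$ we have $\cB_{\mX}(0,t) \subseteq \cB_{\mX}(0,t')$, hence $\cX_f + \cB_{\mX}(0,t) \subseteq \cX_f + \cB_{\mX}(0,t')$. Taking complements within $\dom f$ gives
\begin{align*}
    \dom f \setminus \brac{\cX_f + \cB_{\mX}(0,t')} \subseteq \dom f \setminus \brac{\cX_f + \cB_{\mX}(0,t)}.
\end{align*}
Now fix any $x$ in the left-hand set and any $z \in \dom f$ with $f(z) \leq f(x)$. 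Since $x$ also lies in the right-hand set, the radius-$t$ property with the same $x_\star$ yields $\inp{x-z}{x_\star - z}_{\mX} \leq 0$. This is exactly the radius-$t'$ requirement, so $f \in \newclassu{\mX}{t'}$. Properness and closedness of $f$ do not depend on the radius, so they carry over unchanged.

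There is no real obstacle here. Unlike \Cref{as:main1}, where the set of broximal minimizers $\BProxSub{t}{f}{x}$ changes with $t$ (which is what broke monotonicity in \Cref{prop:nonmonotone}), the uniform condition quantifies over the sublevel set $\{z : f(z) \leq f(x)\}$, which does not depend on $t$. The only point needing care is to state explicitly that the same $x_\star$ serves for all larger radii, so the existential quantifier is not affected.
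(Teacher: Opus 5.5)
Your proof is correct and follows the same route as the paper's proof. Both keep the same witness $x_\star$ and use $\cX_f + \cB_{\mX}(0,t) \subseteq \cX_f + \cB_{\mX}(0,t')$ to show that every point $x$ checked at radius $t'$ was already checked at radius $t$; the condition on $z$, namely $f(z) \leq f(x)$, does not depend on the radius.
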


\begin{proof}
    Let $f \in \newclassu{\mX}{t}$ and fix any $t' \geq t$. We will show that $f$ satisfies the uniform broximal alignment condition with radius $t'$. Since $f\in \newclassu{\mX}{t}$, there exists $x_{\star} \in \cX_f$ such that, for every $x \in \dom f \backslash \brac{\cX_f + \cB_{\mX}(0,t)}$ and every $z \in \dom f$ satisfying $f(z) \leq f(x)$, we have
    \begin{align}\label{eq:aojberf}
        \inp{x-z}{x_{\star}-z}_{\mX} \leq 0.
    \end{align}
    Now, let $x \in \dom f \backslash \brac{\cX_f + \cB_{\mX}(0,t')}$ and let $z \in \dom f$ satisfy $f(z) \leq f(x)$. Because $t' \geq t$, we have $\cB_{\mX}(0,t) \subseteq \cB_{\mX}(0,t)$, and hence
    \begin{align*}
        \cX_f + \cB_{\mX}(0,t) \subseteq \cX_f + \cB_{\mX}(0,t').
    \end{align*}
    Therefore,
    \begin{align*}
        x \not\in \cX_f + \cB_{\mX}(0,t') \qquad \implies \qquad x \not\in \cX_f + \cB_{\mX}(0,t).
    \end{align*}
    So the pair $(x, z)$ satisfies the assumptions of \eqref{eq:aojberf}, and thus
    \begin{align*}
        \inp{x-z}{x_{\star}-z}_{\mX} \leq 0.
    \end{align*}
    Since this holds for every $x \in \dom f \backslash \brac{\cX_f + \cB_{\mX}(0,t')}$ and every $z \in \dom f$ with $f(z) \leq f(x)$, we conclude that $f \in \newclassu{\mX}{t'}$. The proof is complete.
\end{proof}

\newpage

\section{Notation}
\label{app:notation}

\bgroup
\def\arraystretch{1.3}
\begin{table}[H]
	\small
	\centering
	\begin{tabular}{|c|p{12cm}|}
	\hline
	\multicolumn{2}{|c|}{Notation} \\
	\hline
        $d$ & Dimensionality of the problem \\
	    $x_k$ & $k$-th iterate of an algorithm \\
        $\nabla f(x)$ & Gradient of function $f$ at $x$ \\
        $\partial f(x)$ & Subdifferential of function $f$ at $x$ \\
        $\cX_f$ & Set of global minimizers of $f$ \\
        $f_\star$ & Minimum of $f$ \\
        $\dom(f)$ & $\eqdef \{x \in \R^d : f(x)<+\infty\}$ -- domain of $f$ \\
        $\inp{\cdot}{\cdot}$ & Standard Euclidean inner product \\
        $\norm{\cdot}$ & Standard Euclidean norm \\
        $\inp{\cdot}{\cdot}_{\mX}$ & $\inp{x}{y}_{\mX} \eqdef \inp{\mX x}{y}$ for a symmetric positive definite matrix $\mX \in \R^{d\times d}$ \\
        $\norm{\cdot}_{\mX}$ & $\norm{x}_{\mX} \eqdef \sqrt{x^\top \mX x}$ for a symmetric positive definite matrix $\mX \in \R^{d\times d}$ \\
        $\cB(x, t)$ & $\eqdef \brac{z \in \R^d : \norm{z - x} \leq t}$ \\
        $\cB_{\mX}(x, t)$ & $\eqdef \brac{z \in \R^d : \norm{z - x}_{\mX} \leq t}$ \\
        $\BProxSub{t}{f}{x}$ & Broximal operator associated with function $f$ with radius $t>0$ \\
        $\newclass{\mX}{t}$ & The class of Broximal Aligned functions (see \Cref{def:BA}) \\
        $L_f^{<}(x)$ & $\eqdef \brac{u \in \R^d: f(u) < f(x)}$ \\
        $L_f^{\leq}(x)$ & $\eqdef \brac{z \in \R^d: f(z) \leq f(x)}$ \\
        $\Pi(\cdot, \cX)$ & Projection onto a set $\cX$ \\
        $\delta_{\cX}(y)$ & $=\begin{cases} 0, & y\in \cX \\ +\infty, & y\notin \cX \end{cases}$ \\
        $\textnormal{dist}(x, \cX)$ & $\eqdef \inf_{z\in\cX} \norm{x-z}_{\mX}$ \\
        $\cN_{\cX}(x)$ & $ \eqdef \brac{g\in\R^d : \inp{g}{z - x} \leq 0 \,\forall z \in \cX}$ -- the normal cone of $\cX$ at $x$ \\
       $ \R_{\geq0}(z)$ & $\eqdef \{\lambda z \;:\; \lambda \geq 0\}$ \\
        \hline
	\end{tabular}
	\caption{Frequently used notation.}
	\label{table:notation}
\end{table}
\egroup

\end{document}